\theoremstyle{plain}
\newtheorem{lemma}{Lemma}[section]
\newtheorem{prop}[lemma]{Proposition}
\newtheorem{theorem}[lemma]{Theorem}
\newtheorem{corollary}[lemma]{Corollary}
\theoremstyle{definition}
\newtheorem{definition}[lemma]{Definition}
\newtheorem{es}[lemma]{Example}
\newtheorem{rmk}[lemma]{Remark}
\newtheorem{assumption}[lemma]{Assumption}
\tikzstyle{point}=[circle, draw, fill=black, inner sep=0pt, minimum size=4pt]
\tikzstyle{line}=[line width=1.5pt, black!70!white]
\newcommand{\N}{\mathbb{N}}
\newcommand{\R}{\mathbb{R}}
\newcommand{\C}{\mathbb{C}}
\newcommand{\p}{\mathbb{P}}
\newcommand{\calT}{\mathcal{T}}
\newcommand{\calE}{\mathcal{E}}
\title{Free plane curves with a linear Jacobian syzygy}
\author[Valentina Beorchia]{Valentina Beorchia$^{\circ}$}
\address[\textsc{Valentina Beorchia \orcidlinkc{0000-0003-3681-9045}}]{University of Trieste,
Department of Mathematics, Informatics and Geosciences,
Via Valerio 12/1, 34127 Trieste, Italy}
\email{beorchia@units.it}
\thanks{$^{\circ}$The researcher is a member of ``Gruppo Nazionale per le Strutture Algebriche, Geometriche e le loro Applicazioni'', INdAM. She is partially supported by MUR funds: PRIN project GEOMETRY OF ALGEBRAIC STRUCTURES: MODULI, INVARIANTS, DEFORMATIONS, PI Ugo Bruzzo, Project code: 2022BTA242.}
\author[Matteo Gallet]{Matteo Gallet}
\address[\textsc{Matteo Gallet \orcidlinkc{0000-0003-3601-030X}}]{University of Trieste,
Department of Mathematics, Informatics and Geosciences,
Via Valerio 12/1, 34127 Trieste, Italy}
\email{matteo.gallet@units.it}
\author[Alessandro Logar]{Alessandro Logar}
\address[\textsc{Alessandro Logar \orcidlinkc{0000-0001-7963-5110}}]{University of Trieste,
Department of Mathematics, Informatics and Geosciences,
Via Valerio 12/1, 34127 Trieste, Italy}
\email{logar@units.it}
\date{}
\subjclass[2020]{13D02, 13P99, 14H45, 15A21}
\keywords{Free curves, Jacobian syzygies, Hilbert-Burch matrix, Combinatorial commutative algebra}
\begin{document}

\begin{abstract}
The study of planar free curves is a very active area of research, but a structural study of such a class is missing.
We give a complete classification of the possible generators of the Jacobian syzygy module of a plane free curve under the assumption that one of them is linear.
Specifically, we prove that, up to similarities, there are two possible forms for the Hilbert-Burch matrix.
Our strategy relies on a translation of the problem into the accurate study of the geometry of maximal segments of a suitable triangle with integer points.
Following this description, we are able to determine precisely the equations of free curves and the associated Hilbert-Burch matrices.
\end{abstract}

\maketitle

\section{Introduction}
\label{introduction}

Let $R$ be the ring of polynomials $\C[x,y,z]$ and consider $g \in R$, homogeneous of degree~$n$.
To the polynomial~$g$ we can associate the projective curve $V(g) \subset \p^2$ and the $R$-module $\mathrm{Syz}(J_g)$ of Jacobian relations, where $J_g = \left\langle \partial_{x} g, \partial_{y} g, \partial_{z} g \right\rangle$.
The curve $V(g)$ is called \emph{free} if the module~$\mathrm{Syz}(J_g)$ is free.
The latter condition is equivalent to the projective dimension of~$R/J_g$ being~$2$, hence $\mathrm{Syz}(J_g)$ has two generators.

The study of free curves is a very active area of research, also in connection with Terao's conjecture for line arrangements,
asserting that the freeness of a line arrangement only depends on the combinatorics of the intersection lattice (see \cite[Conjecture 4.138]{OrlikTerao1992}).

Although many examples of free curves have been exhibited, see for instance \cite{BuchweitzConca2013,Dimca2018,DiGennaroIlardiEtAl2024,DimcaSticlaru2012,DimcaSticlaru2018a,DimcaSticlaru2018b,DimcaSernesi2014,BeorchiaMiroRoig2024,DimcaIlardiEtAl2023},
a structural study of such a class of curves is missing.

In the present paper, we shall give a complete classification of the possible generators of~$\mathrm{Syz}(J_g)$
under the assumption that one of them is given by a triple of linear forms.
Equivalently, we will explore the locus of Hilbert-Burch matrices associated with free curves, having the property that their three $2 \times 2$ minors yield a vector proportional to the gradient of a homogeneous form.
Thanks to the exactness of a suitable de Rham sequence, this is equivalent to the curl of the three minors being zero.
We translate this condition into a series of bihomogeneous equations in the coefficients of the polynomials appearing in the Hilbert-Burch matrix, which we study accurately with methods of combinatorial commutative algebra.

Our main result is the following.
Up to similarities, there are two possible forms for the Hilbert-Burch matrix.
The first one is the most interesting for us, and corresponds to the case where the matrix is of the form
\[
    \begin{pmatrix}
        ax & D(y,z) \\
        by & E(x,y,z) \\
        cz & F(x,y,z)
    \end{pmatrix}
\]
where $a\not =0, b, c \in \C$, and $D$, $E$, $F$ are homogeneous forms of degree $n-2$.
The core of the paper is a careful analysis of the curl equations.
Useful tools are the set $\calT \subset \N^3$ of integer points in the convex hull of $(n,0,0)$, $(0,n,0)$, and~$(0,0,n)$, and $\calT'$, namely $\calT$ with the three vertices removed.
Our work allows one to explicitly express the solutions of the curl equations in terms of maximal segments of~$\calT'$.
In this way, we obtain \Cref{prop:explicit_formulas} which precisely determines the polynomials $D$, $E$, and~$F$.
We then select those solutions corresponding to reduced curves that are not unions of concurrent lines (\Cref{lemma:reduced_segments} and \Cref{theorem:characterization_free_curves}).
Finally, we gather our findings in \Cref{table:cases}, which collects (up to permutations of the variables) all the possible free curves with a linear Jacobian syzygy in the considered case.\\
The second possible form for the Hilbert-Burch matrix is
\[
    \begin{pmatrix}
        y & D(x,z) \\
        z & E(x,y,z) \\
        0 & F(x,y,z)
    \end{pmatrix}
\]
where $D$, $E$, $F$ are homogeneous forms of degree $n-2$.
This case yields a single class of free curves, i.e., unions of hyperosculating conics,
possibly together with their common tangent, the so-called \emph{P{\l}oski curves}.
This class has been thoroughly studied, see \cite{Ploski2014,Shin2016,Cheltsov2017}.

Free curves with a linear Jacobian syzygy belong to the wider class of curves with a positive dimensional automorphism group, and they have been classified in
\cite{AluffiFaber2000} and \cite[Proposition~4.4]{PatelRiedlTseng2024}.
Our results specify precisely which curves are the free ones, and allow one to describe the corresponding Hilbert-Burch matrices; see \Cref{section:nearly_free} for more details.

Finally, as an application, we give a characterization of free curves with a linear Jacobian syzygy and only quasi-homogeneous singularities, see \Cref{theorem:ploski}.

The organization of the paper is as follows.
\Cref{notation} sets up the notation and assumptions that we are going to use throughout the paper.
\Cref{linear_syzygy} describes, up to similarities, the two possible cases for the Hilbert-Burch matrices of the syzygy modules of free curves.
\Cref{general_case} introduces the triangle~$\calT$ described above and the relations between its maximal segments and the curl equations.
\Cref{nonconcurrent} provides further restrictions on the solutions since we neglect unions of concurrent lines.
\Cref{squarefree} concludes the study of the general case precisely selecting the polynomials we are interested in.
\Cref{special_case} addresses the second case for the Hilbert-Burch matrix.
\Cref{applications} concludes the paper presenting some applications regarding quasi-homogeneous singularities and nearly free curves.

\section{Notation and preliminaries}
\label{notation}

We set $R := \C[x,y,z]$ and we denote by $C = V(g)$ a reduced plane curve of degree $n$ defined by a homogeneous polynomial $g \in R_n$ in the complex projective plane $\p^2=\mathrm{Proj}(R)$.

We denote by $\mathrm{Syz}(J_g)$ the graded $R$-module of all Jacobian relations for $f$, that is
\[
    \mathrm{Syz}(J_g):=
    \bigl\{
        (a,b,c)\in R^3 \mid
        a \, \partial_x g + b \,\partial_y g + c \, \partial_z g = 0
    \bigr\} \,.
\]
It is well known that the three partials $\partial_x g$, $\partial_y g$ and $\partial_z g$ are linearly dependent if and only if $C$ is
a union of lines passing through a point in~$\p^2$ (i.e., a union of concurrent lines).
So, we make the following assumption.

\begin{assumption}
\label{assumption:no_concurrent_and_squarefree}
From now on, we assume that the three partials $\partial_x g$, $\partial_y g$ and $\partial_z g$ are linearly independent.
Moreover, if $g_{\mathrm{red}}$ generates $\sqrt{(g)}$, then
\[
    \mathrm{Syz}(J_g) \cong \mathrm{Syz}(J_{g_{\rm red}}),
\]
so without loss of generality from now on we will assume that $g$ is square-free.
\end{assumption}

\begin{definition}
\label{def:free_curves}
Let $C = V(g)$ be a reduced singular plane curve of degree $n$.
We say that $C$ is \emph{free} if the graded $R$-module
$\mathrm{Syz}(J_g)$ of all Jacobian relations for $g$ is a free $R$-module.
If
\[
    \mathrm{Syz}(J_g) = R(-n_1)\oplus R(-n_2)
\]
with $n_1+n_2=n-1$, the integers $(n_1,n_2)$ are called the \emph{exponents} of $C$.
In this case, the Jacobian ideal admits a minimal free resolution of the type
\[
    \xymatrix{
        0 \ar[r] & R(-n_1) \oplus R(-n_2) \ar[r] & 3R\bigl(-(n-1)\bigr) \ar[r] & J_g \ar[r] & 0
    }
\]
\end{definition}

\begin{rmk}
The freeness condition for a reduced singular curve $C=V(g)$ in $\p^2$ that is not a set of concurrent lines,
is equivalent to the Jacobian ideal~$J_g$ being arithmetically Cohen-Macaulay of codimension two;
such ideals are completely described by the Hilbert-Burch Theorem (see, for instance, \cite{Eisenbud1995}):
if $I = \left\langle g_1,\cdots, g_m \right\rangle \subset R$ is a Cohen-Macaulay ideal of codimension two, 
then $I$ is defined by the maximal minors of the $(m-1)\times m$ matrix of the first syzygies of the ideal $I$.
Combining this with Euler's formula for a homogeneous polynomial,
we get that a free curve $C=V(g)$ in $\p^2$ has a very constrained structure:
there exists a $3 \times 3$ matrix~$M$, with one column consisting of the $3$ variables,
and the remaining $2$ rows a minimal set of generators of $\mathrm{Syz}(J_g)$, such that $\det (M)\equiv 0 \mod (g)$.
\end{rmk}

We now explore the Hilbert-Burch matrices of free curves.
We start by characterizing triples of polynomials that are the gradient of a homogeneous form (see \cite[Lemma~4.2]{BeorchiaGaluppiVenturello2021}):

\begin{lemma}
\label{lemma:de_rham}
A triple of polynomials $(G_1,G_2,G_3)$ satisfies the relations
\[
    \partial_x G_2 - \partial_y G_1 = 0, \quad
    \partial_x G_3 - \partial _z G_1 = 0, \quad
    \partial _y G_3 - \partial_z G_2 = 0
\]
if and only if there exists an element $H \in \Omega^0_{\mathbb{A}^{3}}\cong R$ such that
\[
    \nabla H = (G_1,G_2,G_3).
\]
If, moreover, the polynomials $G_1,G_2,G_3$ are homogeneous of degree $n-1$, then by Euler's formula the polynomial~$H$ is homogeneous of degree~$n$.
\end{lemma}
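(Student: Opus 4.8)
The plan is to prove \Cref{lemma:de_rham} by exhibiting the statement as an instance of the exactness of the de Rham complex on affine space, and then to handle the homogeneity claim separately via Euler's formula.

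First I would set up the identification between triples of polynomials and differential forms. Given $(G_1, G_2, G_3) \in R^3$, associate the $1$-form $\omega = G_1 \, dx + G_2 \, dy + G_3 \, dz \in \Omega^1_{\mathbb{A}^3}$. A direct computation shows that
\[
    d\omega = (\partial_x G_2 - \partial_y G_1)\, dx \wedge dy + (\partial_x G_3 - \partial_z G_1)\, dx \wedge dz + (\partial_y G_3 - \partial_z G_2)\, dy \wedge dz,
\]
so the three displayed relations hold if and only if $d\omega = 0$, i.e., $\omega$ is closed. On the other hand, $\nabla H = (G_1, G_2, G_3)$ for some $H \in R \cong \Omega^0_{\mathbb{A}^3}$ says precisely that $\omega = dH$ is exact. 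Thus the equivalence to be proved is exactly the vanishing of the first de Rham cohomology of $\mathbb{A}^3$ in the algebraic/polynomial setting: every closed polynomial $1$-form is exact. This is the Poincaré lemma for polynomials, which one can either cite or prove directly by the standard homotopy operator: if $d\omega = 0$, set $H(x,y,z) = \int_0^1 \bigl(x G_1 + y G_2 + z G_3\bigr)(tx, ty, tz)\, dt$ and verify term by term, using the closedness relations, that $\partial_x H = G_1$, $\partial_y H = G_2$, $\partial_z H = G_3$; since each $G_i$ is a polynomial, the resulting $H$ is again a polynomial (the $t$-integral of a polynomial in $t$ with polynomial coefficients is a polynomial). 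The reverse implication, that $\nabla H = (G_1, G_2, G_3)$ forces the three relations, is immediate from the equality of mixed second partial derivatives (Clairaut/Schwarz), which holds for polynomials unconditionally.

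For the homogeneity addendum, suppose the $G_i$ are homogeneous of degree $n-1$, and let $H$ be a polynomial with $\nabla H = (G_1, G_2, G_3)$. Write $H = \sum_d H_d$ as a sum of its homogeneous components. Since $\partial_x$ lowers degree by one, the degree-$(n-1)$ part of $\partial_x H$ equals $\partial_x H_n$, and similarly for $y$ and $z$; comparing with the homogeneous $G_i$ forces $\partial_x H_d = \partial_y H_d = \partial_z H_d = 0$ for all $d \neq n$, so $H_d$ is constant for $d \neq n$, i.e., $H = H_n + (\text{constant})$. Absorbing the constant (it does not affect $\nabla H$, and for $n \geq 1$ we may simply take $H = H_n$), we conclude $H$ is homogeneous of degree $n$. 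Alternatively, and more in the spirit of the remark, one can invoke Euler's formula directly: $n \cdot H = x\,\partial_x H + y\,\partial_y H + z\,\partial_z H = x G_1 + y G_2 + z G_3$ would be the clean identity if $H$ were known homogeneous of degree $n$, but run in reverse it shows that $\tilde H := \tfrac{1}{n}(x G_1 + y G_2 + z G_3)$ is homogeneous of degree $n$ and, using the closedness relations, satisfies $\nabla \tilde H = (G_1, G_2, G_3)$ — so one may as well take $H = \tilde H$ from the outset.

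I expect the only real subtlety to be the care needed in the explicit computation $\nabla \tilde H = (G_1, G_2, G_3)$: expanding $\partial_x \bigl(x G_1 + y G_2 + z G_3\bigr) = G_1 + x\,\partial_x G_1 + y\,\partial_x G_2 + z\,\partial_x G_3$ and then rewriting $\partial_x G_2 = \partial_y G_1$ and $\partial_x G_3 = \partial_z G_1$ via the hypotheses, one gets $G_1 + (x\,\partial_x G_1 + y\,\partial_y G_1 + z\,\partial_z G_1) = G_1 + (n-1)G_1 = n G_1$ by Euler applied to the homogeneous form $G_1$ of degree $n-1$; dividing by $n$ gives $\partial_x \tilde H = G_1$, and symmetrically for the other two variables. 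This makes the homogeneous case essentially self-contained and bypasses the general Poincaré-lemma argument entirely, at the cost of assuming homogeneity; since that is the only case the paper actually needs, I would likely present the homogeneous computation as the main argument and mention the general statement as following from the polynomial Poincaré lemma. The main obstacle, such as it is, is purely bookkeeping: keeping the three relations straight and matching them to the three partials correctly.
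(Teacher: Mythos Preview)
Your proof is correct. The paper does not actually prove this lemma: it states it with a pointer to \cite[Lemma~4.2]{BeorchiaGaluppiVenturello2021} and moves on. Your write-up supplies exactly the argument one would expect behind that citation---the identification of the three relations with $d\omega=0$, the polynomial Poincar\'e lemma (with the explicit homotopy integral), and the homogeneity via Euler's formula. The direct construction $\tilde H = \tfrac{1}{n}(xG_1+yG_2+zG_3)$ together with the verification using the closedness relations and Euler on the~$G_i$ is particularly clean and is all the paper actually needs, so your instinct to foreground that computation is good.
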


As a consequence, given a generic matrix $M = \begin{pmatrix} A & D\\
B & E\\
C & F\\
\end{pmatrix}$, if we set
\[
    G_1 := BF-CE, \quad
    G_2 := CD-AF, \quad
    G_3 := AE-BD,
\]
we have that $M$ is a first syzygy matrix for some free plane curve $V(g)$ if and only if
the triple $(G_1: G_2: G_3 )$ is proportional to $(\partial_x g: \partial_y g: \partial_z g)$.
Moreover, by \Cref{lemma:de_rham}, this holds if and only if the curl $(K_1:K_2:K_3)$ of $(G_1:G_2:G_3 )$ is identically zero, where:
\begin{equation}
\label{eq:curl}
    K_1 := \partial_x G_2 - \partial_y G_1, \quad
    K_2 := \partial_x G_3 - \partial _z G_1 , \quad
    K_3 := \partial _y G_3 - \partial_z G_2.
\end{equation}
Such expressions are homogeneous polynomials of degree $n-2$ and they give the zero polynomials if and only if
the coefficients of all monomials in $x,y,z$ are zero.
These coefficients are, in turn, bihomogeneous polynomials of bidegree $(1,1)$ in the coefficients of the entries of~$M$.

\section{Free curves with a linear syzygy}
\label{linear_syzygy}

We focus now on the locus of Hilbert-Burch matrices of type
\begin{equation}
\label{eq:hilbert_burch_matrix}
    M =
    \begin{pmatrix}
        A & D\\
        B & E\\
        C & F\\
    \end{pmatrix},
\end{equation}
where $A,B,C \in \C[x,y,z]_1$, i.e., are linear forms.

\begin{rmk}
\label{rmk: no two zero entries}
Since, by \Cref{assumption:no_concurrent_and_squarefree}, we are not considering unions of concurrent lines,
we can assume that at most one of the linear forms $A,B,C$ is zero, and similarly at most one of the forms $D,E,F$ is zero.
Indeed, otherwise one minor of the matrix \eqref{eq:hilbert_burch_matrix} would be identically zero.
\end{rmk}

Projective plane transformations induce transformations on linear Jacobian syzygies as described in the following result.

\begin{lemma}
Let $A, B, C \in \C[x,y,z]_1$ and let $H \in \C^{3 \times 3}$ be such that
$H \cdot
\left(
  \begin{smallmatrix}
    x\\ y\\ z\\
  \end{smallmatrix}
\right) =
\left(
  \begin{smallmatrix}
    A\\ B\\ C\\
  \end{smallmatrix}
\right)$.
A polynomial $g\in \C[x,y,z]_n$ admits a linear Jacobian syzygy of type
\[
    A \partial_x g + B \partial_y g + C \partial_z g = 0
\]
if and only if for any $N\in GL(3,\C)$ the triple $N^{-1} \cdot H \cdot N \cdot {}^t (x \ y \ z)$
gives a linear Jacobian syzygy of the polynomial
$g(N\cdot {}^t (x \ y \ z))$.
\end{lemma}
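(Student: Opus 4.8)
The plan is to translate the geometric statement about projective transformations directly into the language of partial derivatives and the chain rule. The key observation is that the condition $A\partial_x g + B\partial_y g + C\partial_z g = 0$ says precisely that the gradient $\nabla g$, viewed as a row vector, annihilates the column vector $H\cdot{}^t(x\ y\ z)$, i.e. $(\nabla g)\cdot H\cdot{}^t(x\ y\ z) = 0$. So I would first rewrite the hypothesis in this compact matrix form, setting $v := {}^t(x\ y\ z)$ and writing the syzygy as $(\nabla g)(v)\cdot H v = 0$.

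\textbf{Main computation.} Let $h(v) := g(Nv)$. By the chain rule, $\nabla h(v) = \nabla g(Nv)\cdot N$ (as row vectors, where $\nabla g$ is evaluated at the point $Nv$). I want to show that $N^{-1}HN\cdot v$ is a Jacobian syzygy for $h$, i.e. that $\nabla h(v)\cdot (N^{-1}HNv) = 0$. Substituting, the left-hand side is
\[
    \nabla g(Nv)\cdot N \cdot N^{-1} H N \cdot v = \nabla g(Nv)\cdot H N v = \nabla g(Nv)\cdot H\,(Nv).
\]
Now the hypothesis, applied at the point $Nv$ in place of $v$ — which is legitimate because the syzygy relation $(\nabla g)(w)\cdot H w = 0$ holds as a polynomial identity in $w$, hence at every point — gives exactly that this expression vanishes. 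This proves the "only if" direction. The "if" direction follows by symmetry: the transformation $g \mapsto g(Nv)$, $H \mapsto N^{-1}HN$ is invertible (apply the same construction with $N^{-1}$ and the matrix $N^{-1}HN$), so the converse implication is just the forward implication applied to $h$ and $N^{-1}HN$.

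\textbf{Degree bookkeeping.} I would remark that $g(Nv)$ is again homogeneous of degree $n$, that $N^{-1}HN\cdot v$ is again a triple of linear forms, and that since $N$ is invertible the new syzygy is nonzero whenever the original one is, so the transformed data is of the same type \eqref{eq:hilbert_burch_matrix}. This confirms that projective transformations act on the locus of Hilbert-Burch matrices under consideration, which is what is needed for the "up to similarities" reductions in the rest of the paper.

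\textbf{Expected obstacle.} There is no real obstacle here: the statement is essentially the equivariance of the Jacobian ideal under linear coordinate changes, and the only thing to be careful about is the placement of $N$ versus $N^{-1}$ and the fact that differentiation is with respect to the \emph{new} variables while the syzygy coefficients must also be expressed in the new variables — which is exactly why the conjugation $N^{-1}HN$ (rather than $HN$ or $NH$) is the correct transformation. I would double-check this by noting that $H\cdot v$ transforms as $H\cdot v \mapsto$ (value at $Nv$) $= H N v = N\cdot(N^{-1}HN v)$, so $N^{-1}HN v$ is the correct "pulled-back" syzygy vector in the new coordinates.
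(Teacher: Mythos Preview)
Your proof is correct and follows essentially the same approach as the paper: both arguments rewrite the syzygy as $(\nabla g)\cdot H v = 0$, apply the chain rule to compute the gradient of $g(Nv)$, and cancel $N\cdot N^{-1}$ to reduce to the original relation evaluated at $Nv$. Your treatment of the converse via the symmetry $N \mapsto N^{-1}$ is a small addition the paper leaves implicit, but otherwise the two proofs coincide.
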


\begin{proof}
By the chain rule one gets
\begin{equation}
\label{eq:nabla}
    (\nabla g) \bigl(N \cdot {}^t (x \ y \ z)\bigr) =
    \nabla \Bigl(g\bigl(N\cdot {}^t (x \ y \ z)\bigr)\Bigr) \cdot N^{-1}.
\end{equation}
Moreover, we have
\[
    \begin{pmatrix}
        A \bigl( N\cdot {}^t (x \ y \ z) \bigr)\\
        B \bigl( N\cdot {}^t (x \ y \ z) \bigr)\\
        C \bigl( N\cdot {}^t (x \ y \ z) \bigr)
    \end{pmatrix}
    = H \cdot N \cdot {}^t (x \ y \ z),
\]
and the relation $\nabla g \cdot
\begin{pmatrix}
  A\\ B\\ C\\
\end{pmatrix} = 0$ gives
\[
    (\nabla g) \bigl( N\cdot {}^t (x \ y \ z) \bigr)\cdot
    \begin{pmatrix}
        A \bigl( N\cdot {}^t (x \ y \ z) \bigr)\\
        B \bigl( N\cdot {}^t (x \ y \ z) \bigr)\\
        C \bigl( N\cdot {}^t (x \ y \ z) \bigr)
    \end{pmatrix} = 0.
\]
By taking into account \eqref{eq:nabla}, we finally get
\[
    \nabla \Bigl(g\bigl(N\cdot {}^t (x \ y \ z)\bigr)\Bigr) \cdot N^{-1} \cdot H \cdot N \cdot {}^t (x \ y \ z) = 0. \qedhere
\]
\end{proof}

As a consequence, up to a linear change of coordinates, we can assume that $N$ is in Jordan block form.
Thus, any free curve with a linear Jacobian syzygy is projectively equivalent to one with linear Jacobian syzygy of one of the following three forms:
\begin{align}
    \label{eq:3_blocks}
    A &= ax, & B &= by, & C &= cz, \\
    \label{eq:2_blocks}
    A &= ax+y, & B &= ay, & C &= cz, \\
    \label{eq:1_block}
    A &= ax+y, & B &= ay+z, & C &= az.
\end{align}

\begin{rmk}
Observe that in case~\eqref{eq:3_blocks} we can assume that the three coefficients $a,b,c$ are not all equal, otherwise we would get the Euler relation.
\end{rmk}

\begin{lemma}
\label{lemma:second_and_third_case}
\mbox{}
\begin{enumerate}
    \item
    If a polynomial $g$ admits the linear Jacobian syzygy $(ax+y, ay, cz)$, i.e., we are in case~\eqref{eq:2_blocks},
    then either $g$ is a monomial, or its zero locus is a set of concurrent lines, or it is the zero polynomial.
    \item
    If a polynomial $g$ admits the linear Jacobian syzygy $(ax+y, ay+z, az)$, i.e., we are in case~\eqref{eq:1_block}, and $a \neq 0$, then $g = 0$.
\end{enumerate}
\end{lemma}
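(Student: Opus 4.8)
The plan is to translate each hypothesis into a pair of first-order linear PDEs for $g$ and solve them by the method of characteristics, keeping track of the homogeneity constraint. Write $\theta = x\,\partial_x + y\,\partial_y + z\,\partial_z$ for the Euler operator, so that $\theta g = n g$ since $g$ is homogeneous of degree $n$; this is the one extra equation that always accompanies a linear syzygy and that we exploit to rule out most solutions.

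For part~(1), the syzygy $(ax+y)\partial_x g + ay\,\partial_y g + cz\,\partial_z g = 0$ combined with Euler's relation $ax\,\partial_x g + ay\,\partial_y g + az\,\partial_z g = ng$ gives, after subtracting, the two independent equations $y\,\partial_x g + (c-a)z\,\partial_z g = -ng$ and, say, $ay\,\partial_y g + cz\,\partial_z g$ expressed in terms of $g$ and $\partial_x g$. I would first treat the generic subcase $a \neq 0$, $c \neq a$, $c\neq 0$: here the characteristic system is a linear ODE system in $x,y,z$ whose general solution, intersected with the space of degree-$n$ homogeneous polynomials, is finite-dimensional; a direct integration shows that along characteristics $g$ is forced to be a monomial $x^i y^j z^k$ (the eigenvector structure of the coefficient matrix of the linear syzygy is diagonalizable with distinct eigenvalues, so the only homogeneous polynomial invariants are monomials in a suitable coordinate frame, i.e. here already the standard monomials). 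Then I would handle the degenerate subcases separately: if $c = a$ the syzygy becomes $(ax+y)\partial_x g + ay(\partial_y g) + az\,\partial_z g = 0$, i.e. $y\,\partial_x g = -n a\,g/ \cdots$—more precisely combining with Euler gives $y\,\partial_x g = 0$ (after absorbing constants), hence $\partial_x g = 0$ and $g = g(y,z)$ is a binary form, a union of concurrent lines; if $c = 0$, then $z \partial_z g$ decouples and one finds $g$ is (a power of $z$ times) a form on which the remaining operator acts, again yielding monomials or concurrent lines; if $a = 0$ the operator $y\,\partial_x$ is nilpotent and $x\,\partial_x g$ no longer appears in Euler's relation—here $n g = 0$ forces $g = 0$ (this is the $a=0$ case and is the source of the "zero polynomial" alternative). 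Collecting these subcases yields exactly the stated trichotomy.

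For part~(2), the hypothesis is $(ax+y)\partial_x g + (ay+z)\partial_y g + az\,\partial_z g = 0$ with $a\neq 0$. Since now \emph{all three} diagonal coefficients equal $a$, subtracting $a$ times Euler's relation $\theta g = n g$ gives immediately $y\,\partial_x g + z\,\partial_y g = -n a\, g$. But the left-hand side is the action of a single nilpotent operator $\nu := y\,\partial_x + z\,\partial_y$, and one can iterate: applying $\nu$ repeatedly to $\nu g = -na\,g$ gives $\nu^{m} g = (-na)^m g$ for all $m$, while $\nu$ is locally nilpotent on $R$ (it strictly decreases the $x$-degree, then the $y$-degree), so $\nu^m g = 0$ for $m$ large; since $a\neq 0$ and $n\geq 1$ this forces $g = 0$. (If one prefers to be careful about the degree $n$: the curve has positive degree, so $n\geq 1$ and $na\neq 0$, giving the contradiction; the statement implicitly concerns genuine curves.)

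The main obstacle is the bookkeeping in part~(1): there are several genuinely different degenerate configurations ($c=a$, $c=0$, $a=0$, and combinations), and in each one the combination with Euler's formula must be redone because which operators survive and which become nilpotent changes. The cleanest route is the eigenvalue dichotomy—when the coefficient matrix of the syzygy has a nonzero eigenvalue distinct from the others the invariants are monomials, when an eigenvalue vanishes a variable drops out of Euler's relation and forces $g=0$ or a lower-dimensional form, and when eigenvalues coincide a nilpotent block appears and forces a variable to be absent from $g$, i.e. concurrent lines—so I would organize the proof around that dichotomy rather than brute-forcing the PDEs. Part~(2) is comparatively short once the local nilpotency argument is set up.
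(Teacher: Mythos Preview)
Your argument for part~(2) is correct and slicker than the paper's. Both start from $y\,\partial_x g + z\,\partial_y g = -na\,g$, obtained by subtracting $a$ times Euler's identity from the syzygy. The paper then expands $g$ in monomials, orders them lexicographically, and observes that the resulting coefficient matrix is upper triangular with $an$ on the diagonal, hence invertible. Your observation that $\nu = y\,\partial_x + z\,\partial_y$ is locally nilpotent on~$R$ (it lowers the weight $2i+j$ of $x^iy^jz^k$ by one), so that $\nu g = -na\,g$ yields $(-na)^m g = \nu^m g = 0$ for $m\gg 0$, is the same linear algebra phrased basis-free.

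Your plan for part~(1), however, has the degenerate subcases misidentified, and this is not mere bookkeeping. The combined equation is $y\,\partial_x g + (c-a)z\,\partial_z g = -na\,g$ (not $-ng$). When $c=a$ this reads $y\,\partial_x g = -na\,g$, and if $a\neq 0$ your own nilpotency argument forces $g=0$, \emph{not} concurrent lines as you claim. When $a=0$ the equation becomes $y\,\partial_x g + cz\,\partial_z g = 0$, which is certainly not ``$ng=0$'': for instance $g=y^n$ is a nonzero solution for every $c$. The concurrent-lines outcome in fact arises only when $a=c=0$, where the equation collapses to $y\,\partial_x g = 0$ and hence $g\in\C[y,z]$. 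The paper organizes the analysis differently and more robustly: for each $z$-degree $k$ it sets $\alpha_k := a(n-k)+ck$ and shows that all coefficients of $g$ with that $z$-degree vanish unless $\alpha_k=0$, in which case only the monomial $y^{n-k}z^{k}$ survives. The trichotomy then corresponds to $\alpha_k=0$ having zero, one, or at least two solutions $k\in\{0,\dots,n\}$, the last case forcing $a=c=0$. Your eigenvalue framework can be made to reach the same conclusion, but the kernel of the relevant operator on $R_n$ is governed by the integers $\alpha_k$, not by the coarse case split $c=a$, $c=0$, $a=0$.
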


\begin{proof}
We express a generic degree~$n$ polynomial~$g$ as follows
\begin{equation}
\label{eq:generic_g}
    g = \sum_{(i,j,k) \in I} g_{ijk}x^iy^jz^k,
    \quad
    I = \bigl\{ (i, j, k) \mid 0 \leq i, j, k \leq n, \,i+j+k = n \bigr\}.
\end{equation}
\textbf{Case (1)}.
From the relation $(ax+y)\,\partial_x g +ay\,\partial_y g +cz\,\partial_z g =0$ and the Euler identity, we get
\[
    a \,n \,g + y\,\partial_x g +(c-a)z \,\partial_z g = 0.
\]
By \eqref{eq:generic_g}, we get
\[
    a\,n \sum_{(i,j,k) \in I} g_{ijk}x^iy^jz^k+y\,
    \sum_{(i,j,k) \in I, \,i\ge 1} i\,g_{ijk}x^{i-1}y^jz^k +
    (c-a)z \, \sum_{(i,j,k) \in I, \,k\ge 1} k \,g_{ijk}x^iy^j z^{k-1}= 0,
\]
that is
\[
    a\,n \sum_{(i,j,k) \in I} g_{ijk}x^iy^jz^k +
    \sum_{(i,j,k) \in I, \,i\ge 1} i\,g_{ijk}x^{i-1}y^{j+1}z^k +
    \sum_{(i,j,k) \in I} (c-a)k\,g_{ijk}x^iy^{j} z^{k} = 0.
\]
Let $v$ be the left hand side of the previous equality.
Fix $k \leq n$ and consider the monomials:
\[
  x^{n-k}z^k, \quad x^{n-k-1}yz^k, \quad x^{n-k-2}y^2z^k,\ \dots\ , \ y^{n-k}z^k
\]
(i.e., all the degree $n$ monomials with $z$ of fixed degree $k$).
Let $\alpha_k := a(n-k)+ck$.
The coefficients in $v$ of the above monomials are, respectively:
\begin{align*}
    \alpha_k g_{n-k\,0\,k}, \\
    \alpha_k g_{n-k-1\,1\,k}+(n-k)g_{n-k\, 0\, k}, \\
    \alpha_k g_{n-k-2\,2\,k}+(n-k-1)g_{n-k-1\, 1\, k},\\
    \vdots \\
    \alpha_k g_{0\,n-k\,k}+g_{1\, n-k-1\, k}.
\end{align*}
Since $v=0$, all these linear forms in $g_{ijk}$ must be zero.
If $\alpha_k \neq 0$, then we see that $g_{ijk} = 0$ for all $i, j$ (where $i+j +k = n$).
If, conversely, $\alpha_k = 0$, then we get that $g_{ijk} = 0$ for $i, j$ such that $i + j + k = n$ and $i > 0$,
while $g_{0\, n-k\, k}$ can take any value.
Therefore, we distinguish three subcases:
\begin{description}
    \item[Subcase A]
    If $a(n-k)+ck \neq 0$ for all $k$, then $g$ is the zero polynomial.
    \item[Subcase B]
    If for a single value~$\bar{k}$ of $k$ we have $a(n - \bar{k})+\bar{k}c = 0$, while for all $k \neq \bar{k}$ we have $a(n-k)+ck \neq 0$, then
    $g$ is the monomial $g_{0\, n - \bar{k}\, \bar{k}}y^{n -\bar{k}}z^{\bar{k}}$.
    \item[Subcase C]
    If $a(n-k)+kc = 0$ is zero for at least two different values $\bar{k}_1$ and~$\bar{k}_2$, then $a = c = 0$.
    Thus, we get the Jacobian relation $y\partial_x g=0$, which implies $\partial_x g=0$ and $V(g)$ is a set of concurrent lines.
\end{description}

\textbf{Case (2)}.
We argue in a similar way.
From the relation $(ax+y)\,\partial_x g + (ay+z)\,\partial_y g + az\,\partial_z g =0$ and the Euler identity, we get
\[
    a \,n \,g + y\,\partial_x g +z \,\partial_y g = 0.
\]
As in Case (1), by \eqref{eq:generic_g}
we get the following square homogeneous linear system in the variables~$\{g_{ijk}\}$:
\[
    \left\{
        \begin{aligned}
            a\,n \,g_{n00} &= 0\\
            a\,n \,g_{0n0}+g_{1 n-1 0} &= 0\\
            a\,n \,g_{0jk}+g_{1jk}+ j\,g_{0\, j+1 \,k-1}&=0, &k &\ge 1\\
            a\,n \,g_{i0k}+ i\,g_{i\,1 \,k-1}&=0, & k &\ge 1\\
            a\,n \,g_{ij0}+ (i+1)\,g_{i+1\,j-1 \,0}&=0, \\
            a\,n \,g_{ijk}+ i\,g_{i+1\,j-1 \,k}+ j\,g_{i\,j+1 \,k-1}&=0, & j &\ge 1, \,k\ge 1.
        \end{aligned}
    \right.
\]
Order the indeterminates~$g_{ijk}$ lexicographically, i.e.,
$g_{i_1, j_1, k_1} \leq g_{i_2, j_2, k_2}$ if and only if
$i_1 < i_2$ or ($i_1 = i_2$ and $j_1 < j_2$) or ($i_1 = i_2$, $j_1 = j_2$, and $k_1 < k_2$).
In this way, the associated matrix is upper triangular, with $a\,n$ on the diagonal.
Hence, if $a \neq 0$, the matrix is of maximal rank,
therefore, the system admits only the trivial solution.
\end{proof}

We sum up the findings of this Section in the following proposition.

\begin{prop}
Let $g \in \C[x,y,z]_n$ be a non-zero polynomial defining a reduced curve that is not a set of concurrent lines, with a linear Jacobian syzygy $(A, B, C)$.
Up to linear changes of variables, we have the following two possibilities:
\begin{enumerate}
    \item $A = ax$, $B = by$, $C = cz$ for some $a, b, c$ not all equal and $a \neq 0$;
    \item $A = y$, $B = z$, $C = 0$.
\end{enumerate}
\end{prop}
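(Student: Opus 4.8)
The plan is to reduce the claim to a short case analysis over the three normal forms \eqref{eq:3_blocks}, \eqref{eq:2_blocks}, \eqref{eq:1_block} produced earlier in this section, and then to eliminate or identify each of them by means of \Cref{lemma:second_and_third_case} together with \Cref{assumption:no_concurrent_and_squarefree}. Since $g$ carries a linear Jacobian syzygy $(A,B,C) \neq (0,0,0)$, the lemma on projective transformations and the ensuing reduction to Jordan block form show that, after a linear change of variables, $(A,B,C)$ equals one of those three triples; what remains is to decide which of them is compatible with $g$ being non-zero, reduced, and not a union of concurrent lines.

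I would first treat the diagonal form \eqref{eq:3_blocks}, $(A,B,C) = (ax,by,cz)$. As noted just after \eqref{eq:3_blocks}, if $a = b = c$ then Euler's identity collapses the syzygy to $a\,n\,g = 0$, forcing $g = 0$ or a trivial syzygy; so $a,b,c$ are not all equal. If two of them vanished, say $b = c = 0$, the relation would become $ax\,\partial_x g = 0$, hence $\partial_x g = 0$, making the three partials of $g$ linearly dependent and contradicting \Cref{assumption:no_concurrent_and_squarefree}. Therefore at least two of $a,b,c$ are non-zero, and permuting the variables we may assume $a \neq 0$; this is exactly alternative~(1).

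Next I would dispose of the remaining two forms. For \eqref{eq:1_block}, $(A,B,C) = (ax+y, ay+z, az)$: by part~(2) of \Cref{lemma:second_and_third_case}, $a \neq 0$ would force $g = 0$, which is excluded, so $a = 0$ and the triple is $(y,z,0)$, i.e.\ alternative~(2). For \eqref{eq:2_blocks}, $(A,B,C) = (ax+y, ay, cz)$: part~(1) of \Cref{lemma:second_and_third_case} says $g$ is zero, a union of concurrent lines, or a monomial, and the proof of that lemma shows such a monomial involves only $y$ and $z$, so $\partial_x g = 0$, once more against \Cref{assumption:no_concurrent_and_squarefree}. The first two options are excluded by hypothesis, so \eqref{eq:2_blocks} cannot occur, and we are left precisely with alternatives~(1) and~(2).

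Since all the substantial work is already contained in \Cref{lemma:second_and_third_case}, I do not expect a real obstacle here; the only care needed is to check that each degenerate outcome --- the zero polynomial, a monomial, a set of concurrent lines, or a trivial syzygy --- is genuinely incompatible with the standing hypotheses, and to observe that in case \eqref{eq:3_blocks} a coordinate permutation suffices to bring a non-zero coefficient into the first slot.
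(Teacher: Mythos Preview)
Your proof is correct and follows exactly the route the paper intends: the proposition is stated there as a summary of the section with no separate proof, and you simply make explicit the combination of the Jordan-form reduction, \Cref{lemma:second_and_third_case}, and \Cref{assumption:no_concurrent_and_squarefree}. The only (harmless) excess is arguing that at least two of $a,b,c$ are non-zero --- for the conclusion $a\neq 0$ after a permutation it suffices that not all three vanish, which is how the paper proceeds at the start of \Cref{general_case}.
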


\Cref{general_case,nonconcurrent,squarefree}
concern the study of the first of the two cases above, while
\Cref{special_case} briefly analyzes the second case.

\section{The case \texorpdfstring{$A = ax$, $B = by$, $C = cz$}{A=ax, B=by, C=cz}}
\label{general_case}

This Section is devoted to an accurate analysis of the general case.
The final point is to obtain the possible expressions of squarefree polynomials $g$ given by:
\begin{equation}
\label{eq:def_g}
    g = \frac{1}{n}\det
    \begin{pmatrix}
        x & ax & D \\
        y & by & E \\
        z & cz & F
    \end{pmatrix}
\end{equation}
and annihilating the curl from \Cref{eq:curl}.
The final result is summarized in~\Cref{squarefree},~\Cref{table:cases}.

To clarify our strategy, we start by illustrating the main ideas and the results of the next three Sections.

Observe that the expressions in \eqref{eq:curl} are bihomogeneous in the variables $a,b,c$ and in the coefficients of $D,E,F$; see \Cref{prop:6eqs} for the explicit form.
With suitable elementary operations, the system of equations can be transformed in an equivalent, simpler one, composed of four families:
three of them split into a product of a linear form in~$a$, $b$, $c$ and a coefficient of~$D$, or~$E$, or~$F$,
while the fourth one is given by linear polynomials in the coefficients of $E$ and~$F$; see \Cref{prop:4eqs}.

If the linear forms in $a,b,c$ never vanish, the system admits only the trivial solution.
Therefore, we focus on the other situation.

The coefficients of the linear forms determine lattice points in a triangle in~$\R^3$, see \Cref{def:grid} and \Cref{fig:grid}.
The existence and dimension of the space of solutions can be translated to the geometry of segments in the lattice, see \Cref{solving} and \Cref{fig:primary_decomposition}.
Finally, we show in \Cref{cor:primary_decomposition} that particular lattice points and suitable maximal segments correspond to free polynomials~$g$.

Moreover, we shall prove that it is possible to explicitly express the general~$g$ satisfying the linear Jacobian syzygy,
and to exclude the situations of concurrent lines (see \Cref{nonconcurrent}) and of nonreduced polynomials, see \Cref{squarefree}.

\subsection{The equations}

The first task is to codify freeness of a form with exponents $(1, n-2)$ into a system of polynomial equations.
Consider the matrix $M$ given in \eqref{eq:hilbert_burch_matrix} with $A=ax$, $B=by$, $C=cz$.
As $a,b,c$ are not all zero, we will assume from now on that
\[
    a \neq 0.
\]
Moreover, we observe that,
by adding to the second column of $M$ a suitable multiple of the first column, we
can assume that $D$ is a generic form of degree $n-2$ in $y, z$ without changing the $2 \times 2$ minors of~$M$.
Let $E$, $F$ be generic forms in $x, y, z$ of degree~$n-2$.
Hence, the forms~$E$, $F$, and~$D$ can be written as:
\begin{align*}
    E &= \sum_{(i,j,k) \in I} e_{ijk}x^iy^jz^k,
    & I &= \{(i, j, k) \mid 0 \leq i, j, k \leq n-2,
    \ i+j+k = n-2\}, \\
    F &= \sum_{(i,j, k) \in I} f_{ijk}x^iy^jz^k,
    & I &= \{(i, j, k) \mid 0 \leq i,j, k \leq n-2,
    \ i+j+k = n-2\}, \\
    D &= \sum_{(j,k) \in I} d_{0jk}y^jz^k,
    & I &= \{(j, k) \mid 0 \leq j, k \leq n-2,
    \ j+k = n-2 \}.
\end{align*}
We consider now the equations $K_1=0, K_2=0, K_3=0$, where the $K_i$ are as in \Cref{eq:curl}.
We introduce a geometric object that helps us to keep track of the exponents of monomials appearing in such equations, to simplify their handling.

\begin{definition}
\label{def:grid}
Let $n \in \N$.
The \emph{$n$-triangle grid} $\calT_n$ is the set of integer points of the $n$-th dilation of the standard $2$-simplex in $\R^3$,
i.e., the integer points of the convex hull of the three points $(n,0,0)$, $(0,n,0)$, and~$(0,0,n)$.
A \emph{maximal segment} in $\calT_n$ is the intersection of $\calT_n$ with a line connecting two points in the grid different from $(n,0,0)$, $(0,n,0)$, and~$(0,0,n)$;
these sets are indeed maximal with respect to inclusion.
\end{definition}

\begin{prop}
\label{prop:6eqs}
The polynomial $K_1$ is identically zero if and only if
for every $i, j, k \geq 0$ such that $i+j+k = n-2$
the following equations are satisfied:
\begin{align}
    \bigl((i+1)a+(j+1)b\bigr)f_{ijk} &= 0 \quad (\text{if } k = 0),
    \label{eq:eqn1}\\
    \bigl((i+1)a+(j+1)b \bigr)f_{ijk}-(j+1)c \, e_{i j+1\, k-1} &= 0 \quad
    (\text{if } k > 0).
    \label{eq:eqn2}
\end{align}
The polynomial $K_2$ is identically zero if and only if
for every $i, j, k \geq 0$ such that $i+j+k = n-2$
the following equations are satisfied:
\begin{align}
    \bigl((i+1)a+(k+1)c\bigr)e_{ijk} &= 0 \quad (\text{if } j = 0),
    \label{eq:eqn3}\\
    \bigl((i+1)a+(k+1)c \bigr)e_{ijk}-(k+1)bf_{i j-1\, k+1} &= 0 \quad
    (\text{if } j > 0).
    \label{eq:eqn4}
\end{align}
The polynomial $K_3$ is identically zero if and only if
for every $i, j, k \geq 0$ such that $i+j+k = n-2$
the following equations are satisfied:
\begin{align}
    \bigl((j+1)b+(k+1)c\bigr)d_{ijk} &= 0 \quad (\text{if } i = 0),
    \label{eq:eqn5}\\
    (j+1)e_{i-1\,j+1\,k}+(k+1)f_{i-1\, j\, k+1} &= 0 \quad
    (\text{if } i > 0).
    \label{eq:eqn6}
\end{align}
\end{prop}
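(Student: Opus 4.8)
The plan is to compute the polynomial $K_1 = \partial_x G_2 - \partial_y G_1$ explicitly in terms of the coefficients of $D$, $E$, $F$ (and the scalars $a,b,c$) and then read off the vanishing of each monomial coefficient. Recall $G_1 = BF - CE = byF - czE$ and $G_2 = CD - AF = czD - axF$. Since $D = D(y,z)$ does not involve $x$, we have $\partial_x G_2 = \partial_x(czD - axF) = -aF - ax\,\partial_x F$, and $\partial_y G_1 = \partial_y(byF - czE) = bF + by\,\partial_y F - cz\,\partial_y E$. Hence
\[
    K_1 = -aF - ax\,\partial_x F - bF - by\,\partial_y F + cz\,\partial_y E
        = -\bigl(aF + bF + ax\,\partial_x F + by\,\partial_y F\bigr) + cz\,\partial_y E .
\]
The first obstacle — really just bookkeeping — is to expand each term on the monomial basis $x^iy^jz^k$ with $i+j+k=n-2$ and collect. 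For a monomial $f_{ijk}x^iy^jz^k$ of $F$, the contribution of $-aF - ax\,\partial_x F - by\,\partial_y F$ is $-(a + ai + bj)\,f_{ijk}x^iy^jz^k = -\bigl((i+1)a + jb\bigr)f_{ijk}x^iy^jz^k$; I will re-index (replace $j$ by $j+1$ throughout $F$'s contribution, since the Euler-type terms naturally shift) so that the coefficient of $x^iy^jz^k$ picks up $\bigl((i+1)a + (j+1)b\bigr)f_{ij k}$ after aligning indices with the $e$-terms. The term $cz\,\partial_y E = cz\sum j\,e_{ijk}x^iy^{j-1}z^k = c\sum (j+1)e_{i\,j+1\,k}x^iy^jz^{k+1}$, so its coefficient of $x^iy^jz^k$ (for $k\ge 1$) is $(j+1)c\,e_{i\,j+1\,k-1}$. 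Matching coefficients of $x^iy^jz^k$ gives precisely $\bigl((i+1)a+(j+1)b\bigr)f_{ijk} - (j+1)c\,e_{i\,j+1\,k-1}$ when $k\ge 1$, and the same expression without the $c$-term when $k=0$ (there is no $x^iy^jz^k$ coming from $cz\,\partial_y E$ when $k=0$). This is exactly \eqref{eq:eqn1}–\eqref{eq:eqn2}; one must only double-check that the index ranges $i+j+k=n-2$, $i,j,k\ge 0$ are the correct ones and that no monomial is double-counted or omitted at the boundary of the triangle grid $\calT_{n-2}$.

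The computations for $K_2$ and $K_3$ are entirely parallel and I would present them by symmetry. For $K_2 = \partial_x G_3 - \partial_z G_1$ with $G_3 = AE - BD = axE - byD$ and $G_1 = byF - czE$: since $D = D(y,z)$, $\partial_x G_3 = aE + ax\,\partial_x E$, while $\partial_z G_1 = -cE - cz\,\partial_z E + by\,\partial_z F$ (using $\partial_z(byF)=by\,\partial_z F$ and $\partial_z(czE) = cE + cz\,\partial_z E$). Thus $K_2 = aE + ax\,\partial_x E + cE + cz\,\partial_z E - by\,\partial_z F$, and collecting the coefficient of $x^iy^jz^k$ yields $\bigl((i+1)a+(k+1)c\bigr)e_{ijk}$ from the $E$-terms and $-(k+1)b\,f_{i\,j-1\,k+1}$ from $-by\,\partial_z F$ (present only when $j\ge 1$), giving \eqref{eq:eqn3}–\eqref{eq:eqn4}. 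For $K_3 = \partial_y G_3 - \partial_z G_2$ with $G_3 = axE - byD$ and $G_2 = czD - axF$: $\partial_y G_3 = ax\,\partial_y E - bD - by\,\partial_y D$ and $\partial_z G_2 = cD + cz\,\partial_z D - ax\,\partial_z F$, so $K_3 = ax\,\partial_y E + ax\,\partial_z F - bD - by\,\partial_y D - cD - cz\,\partial_z D$. The $D$-terms contribute $-(b+c+bj+ck)d_{0jk} = -\bigl((j+1)b+(k+1)c\bigr)d_{0jk}$ to the coefficient of $y^jz^k$ (note these are the monomials with $i=0$), while $ax\,\partial_y E + ax\,\partial_z F$ contributes, to the coefficient of $x^iy^jz^k$ with $i\ge 1$, the quantity $a\bigl((j+1)e_{i-1\,j+1\,k} + (k+1)f_{i-1\,j\,k+1}\bigr)$; since $a\ne 0$ this is equivalent to \eqref{eq:eqn6}, and when $i=0$ only the $D$-terms survive, giving \eqref{eq:eqn5}.

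The main thing to be careful about — the genuine (if mild) obstacle — is the interplay between the degree-$(n-1)$ forms $G_1,G_2,G_3$ and the degree-$(n-2)$ index set: each $\partial_\bullet G_i$ is a sum of terms, some arising from differentiating a variable explicitly present (producing the "$+1$" shifts in the coefficients, e.g. $ax\,\partial_x F$ contributing $ai\,f_{ijk}$ which combines with $aF$ contributing $a\,f_{ijk}$ to give $(i+1)a\,f_{ijk}$) and some arising from re-indexing after multiplying by a variable (producing the index shifts $e_{i\,j+1\,k-1}$, $f_{i\,j-1\,k+1}$, $e_{i-1\,j+1\,k}$, $f_{i-1\,j\,k+1}$). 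One must verify that these shifted indices stay inside $I$ precisely when the stated case hypothesis ($k>0$, or $j>0$, or $i>0$) holds, and that in the complementary boundary case the shifted term is genuinely absent rather than merely out of range for a spurious reason. With that check done, the proposition follows by the elementary fact that a homogeneous polynomial is zero iff all its monomial coefficients vanish.
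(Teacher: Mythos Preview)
Your approach is correct and essentially identical to the paper's: both compute each $K_\ell$ explicitly as a combination of $D,E,F$ and terms of the form (variable)$\cdot$(partial derivative), then read off monomial coefficients on~$\calT_{n-2}$; the paper writes only $K_1 = -(a+b)F + cz\,\partial_y E - ax\,\partial_x F - by\,\partial_y F$ in detail and dismisses $K_2,K_3$ as similar, whereas you work out all three. One small slip to fix: in your $K_1$ computation you dropped the $-bF$ term when collecting the $F$-contribution, obtaining $-\bigl((i+1)a+jb\bigr)f_{ijk}$ and then invoking an unnecessary ``re-indexing'' to recover the $(j+1)$; in fact the correct coefficient $-\bigl((i+1)a+(j+1)b\bigr)f_{ijk}$ comes directly once you include $-bF$, with no shift of $j$ needed.
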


\begin{proof}
A direct computation gives
\[
    K_1 = -(a+b)F
    + c z \partial_y E
    - a x\partial_x F
    - b y \partial_y F \,.
\]
The monomials of the polynomial $\partial_x F$ (a form of degree $n-3$) are represented by the set~$\calT_{n-3}$;
specifically, to the point $(i, j, k) \in \calT_{n-3}$ we associate the coefficient~$(i+1) f_{i+1\,jk}$.
Hence, the monomials of~$x\, \partial_x F$ are represented by the points of~$\calT_{n-2}$;
more precisely, to the point $(i, j, k) \in \calT_{n-2}$ (which comes,
if $i>0$, from the point $(i-1, j, k) \in \calT_{n-3}$) it is associated the coefficient~$i f_{ijk}$.
Note that this formula holds also for the case $i = 0$.
In a similar way, to the point $(i, j, k) \in \calT_{n-2}$ we associate the
coefficient~$j f_{ijk}$ of the polynomial~$y\, \partial_y F$.
For the polynomial~$z\, \partial_y E $, we distinguish two cases:
for the points $(i, j, k) \in \calT_{n-2}$ with $k=0$, the coefficient of $z\, \partial_y E$ is $0$,
while, if $k > 0$, the coefficient is $(j+1)e_{ij+1\, k-1}$.
The polynomial $K_1$ is a form of degree~$n-2$, hence
its monomials are encoded in $\calT_{n-2}$.
Let $(i, j, k) \in \calT_{n-2}$ with $k=0$.
Then, the coefficient in~$K_1$ of the
monomial $x^iy^jz^k$ (which is $x^iy^j$ with $i+j = n-2$) is
\[
    -(a+b)f_{ijk}-aif_{ijk}-bjf_{ijk} =
    -\bigl((i+1)a+(j+1)b \bigr)f_{ijk}
\]
and this gives \eqref{eq:eqn1}.
If $k> 0$, the coefficient of the monomial $x^iy^jz^k$ with $i+j+k = n-2$ and $k > 0$ in~$K_1$ is
$-\left((i+1)a + (j+1)b \right)f_{ijk} + (j+1)ce_{i j+1\, k-1}$ and this gives \eqref{eq:eqn2}.
The other cases are proved in a similar way.
\end{proof}

The next proposition provides useful information to solve \Cref{eq:eqn1,eq:eqn2,eq:eqn3,eq:eqn4,eq:eqn5,eq:eqn6}.

\begin{prop}
\label{prop:4eqs}
If $a \neq 0$, then \Cref{eq:eqn1,eq:eqn2,eq:eqn3,eq:eqn4,eq:eqn5,eq:eqn6} are equivalent to the following four sets of equations:
\begin{align}
\label{eqA1_3}
    e_{i-1\,j\,k-1}(ia+jb+kc) &= 0\quad (\mbox{$i>0, \ k>0,\ i+j+k = n$})\\
    f_{i-1\,j-1\,k}(ia+jb+kc) &= 0 \quad (\mbox{$i>0,\ j>0,\ i+j+k = n$})\\
    d_{i\, j-1\,k-1}(ia+jb+kc) &= 0 \quad (\mbox{$i = 0, \ j > 0, \ k > 0,\ i+j+k = n$})
\end{align}
\begin{equation}
\label{eqA4}
    je_{i-1\,j\,k-1}+kf_{i-1\,j-1\,k} = 0 \quad (\text{if } i, j, k > 0, \ i+j+k = n)
\end{equation}
\end{prop}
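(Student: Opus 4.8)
The plan is to start from the six equations \eqref{eq:eqn1}--\eqref{eq:eqn6} of \Cref{prop:6eqs} and show, by a sequence of reversible elementary manipulations, that they are equivalent to the four families displayed in the statement. First I would rewrite the index ranges: in \eqref{eq:eqn1}--\eqref{eq:eqn6} the triples $(i,j,k)$ run over $\calT_{n-2}$, i.e.\ $i+j+k=n-2$, whereas in the target equations the triples satisfy $i+j+k=n$. So the very first step is a reindexing $(i,j,k)\mapsto(i-1,j,k-1)$ in \eqref{eq:eqn2}, $(i,j,k)\mapsto(i-1,j-1,k)$ in \eqref{eq:eqn4}, and $(i,j,k)\mapsto(i,j-1,k-1)$ in \eqref{eq:eqn5}, after which the coefficients $(i+1)a+(j+1)b$, etc., become $ia+jb+kc$ exactly as in \eqref{eqA1_3}. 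I would carry out this bookkeeping carefully and record that the boundary hypotheses ($k=0$ versus $k>0$, etc.) translate into the strict inequalities $i>0$, $j>0$, $k>0$ appearing in the statement.

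Next I would treat the ``boundary'' equations \eqref{eq:eqn1}, \eqref{eq:eqn3}, \eqref{eq:eqn5} as special cases of the interior ones. The point is that equation \eqref{eq:eqn1} (the case $k=0$) is literally what \eqref{eq:eqn2} would give if one allowed $k=0$ and read $e_{i\,j+1\,-1}$ as $0$; similarly \eqref{eq:eqn3} is the $j=0$ instance of \eqref{eq:eqn4}. After the reindexing this means that the first two lines of \eqref{eqA1_3}, namely $e_{i-1\,j\,k-1}(ia+jb+kc)=0$ for $i,k>0$ and $f_{i-1\,j-1\,k}(ia+jb+kc)=0$ for $i,j>0$, are each equivalent to a matched pair (interior plus boundary) among \eqref{eq:eqn1}--\eqref{eq:eqn4}. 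The third line of \eqref{eqA1_3} is just \eqref{eq:eqn5} reindexed. The remaining equation \eqref{eq:eqn6} (case $i>0$) is, after the substitution $(i,j,k)\mapsto(i-1,j,k)$ with $i+j+k=n-1$, followed by relabeling to make $i+j+k=n$, exactly \eqref{eqA4}: $(j+1)e_{i-1\,j+1\,k}+(k+1)f_{i-1\,j\,k+1}=0$ becomes $j\,e_{i-1\,j\,k-1}+k\,f_{i-1\,j-1\,k}=0$ for $i,j,k>0$.

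The one genuine use of the hypothesis $a\neq 0$ is in showing that equations \eqref{eq:eqn2} and \eqref{eq:eqn4} are \emph{not independent}, but rather that the interior equation \eqref{eq:eqn6} (equivalently \eqref{eqA4}) together with the factored equations \eqref{eqA1_3} already imply the full force of \eqref{eq:eqn2} and \eqref{eq:eqn4}; conversely the factored form is obtained from the original system by eliminating the cross terms $c\,e_{i\,j+1\,k-1}$ and $b\,f_{i\,j-1\,k+1}$ using \eqref{eq:eqn6}. Concretely: from \eqref{eq:eqn2} I would substitute the value of $(j+1)c\,e_{i\,j+1\,k-1}$ predicted by \eqref{eq:eqn4} (with shifted indices), combine with \eqref{eq:eqn6}, and the linear-algebra identity $ia+jb+kc = \bigl((i+1)a+(j+1)b\bigr)+\bigl(\text{correction}\bigr)$ collapses the two equations into the single factored one; the invertibility of the coefficient $a$ (which guarantees $ia+jb+kc$ is the ``right'' combination and that no denominator vanishes in the back-substitution) is what makes the elimination reversible. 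This elimination step — matching up the index shifts so that the linear combinations telescope correctly — is the part I expect to require the most care, since it is easy to be off by one in a subscript; everything else is routine reindexing. Finally I would state explicitly that each manipulation used is an equivalence of linear systems over $\C[a,b,c]$ (adding multiples of one equation to another and discarding redundant equations), so the two systems have the same solution set, completing the proof.
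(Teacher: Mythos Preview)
Your overall plan---reindex to $i+j+k=n$, absorb \eqref{eq:eqn1} and \eqref{eq:eqn3} as the $k=0$ and $j=0$ boundary cases, recognize \eqref{eq:eqn5} and \eqref{eq:eqn6} as the third line of \eqref{eqA1_3} and as \eqref{eqA4}, then eliminate the cross terms---is correct and matches the paper's strategy. The difference lies in \emph{which} equations you combine in the elimination. The paper uses \eqref{eq:eqn6}: from $\calE_6(i{+}1,j,k{-}1)$ one has $kf_{ijk}=-(j{+}1)e_{i\,j+1\,k-1}$, and substituting into $k\,\calE_2(i,j,k)$ yields $(j{+}1)\,e_{i\,j+1\,k-1}\bigl((i{+}1)a+(j{+}1)b+kc\bigr)=0$ directly; the same identity, read backwards, shows $k\calE_2=0$ once the factored equation and $\calE_6$ vanish. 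The symmetric combination of $\calE_6$ with $\calE_4$ gives the $f$-line. In particular, the paper's manipulation never invokes $a\neq 0$; that hypothesis is only the standing assumption of the section.

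Your proposed elimination instead feeds \eqref{eq:eqn4} into \eqref{eq:eqn2}. Note that $\calE_4(i,j{+}1,k{-}1)$ gives $\bigl((i{+}1)a+kc\bigr)e_{i\,j+1\,k-1}=kb\,f_{ijk}$, not a direct expression for $(j{+}1)c\,e_{i\,j+1\,k-1}$ as you wrote; the linear combination that actually cancels $e$ produces an overall factor $(i{+}1)a$, which is where your appeal to $a\neq 0$ enters. So your route works, but the role you assign to $a\neq 0$ is an artifact of pairing \eqref{eq:eqn2} with \eqref{eq:eqn4} rather than with \eqref{eq:eqn6}. The paper's shortcut is precisely that \eqref{eq:eqn6} already links the exact pair of coefficients appearing in each of \eqref{eq:eqn2} and \eqref{eq:eqn4}, so one substitution suffices in each direction and no division is needed.
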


\begin{proof}
Let us suppose that \Cref{eq:eqn1,eq:eqn2,eq:eqn3,eq:eqn4,eq:eqn5,eq:eqn6} hold true.
To simplify the notations, we denote by $\calE_1(i, j, k),
\dotsc, \calE_6(i, j, k)$ the left hand side of \Cref{eq:eqn1,eq:eqn2,eq:eqn3,eq:eqn4,eq:eqn5,eq:eqn6} (it is understood
that $i, j, k$ are subject to the required restrictions given in \Cref{prop:6eqs}). 
The equations of the third line of~\eqref{eqA1_3}
are already obtained
in~\eqref{eq:eqn5}, while \Cref{eqA4} is obtained
from~\eqref{eq:eqn6} substituting $i+1$ in place of~$i$, and $j-1$ in place of~$j$.

If in \Cref{eq:eqn2} we eliminate $f_{ijk}$ using \Cref{eq:eqn6}, we get:
\begin{multline}
\label{eq:formulaX1}
    \bigl((i+1)a+(j+1)b \bigr)\calE_6(i+1, j, k-1) - k\calE_2(i, j, k) = \\
    e_{i\, j+1\, k-1}(j+1)\bigl((i+1)a+(j+1)b+kc \bigr)
\end{multline}
(for $k>0$ and $i+j+k = n-2$).
Similarly, from
\Cref{eq:eqn2,eq:eqn3,eq:eqn4,eq:eqn5,eq:eqn6}
we have:
\begin{multline}
\label{eq:formulaX2}
    \bigl((i+1)a+(k+1)c\bigr)\calE_6(i+1, j-1, k)-j\calE_4(i, j, k) = \\
    f_{i\, j-1\, k+1}(k+1)\bigl((i+1)a+jb+ (k+1)c\bigr)
\end{multline}
(for $j > 0$, $i+j+k = n-2$).
Suppose that \Cref{eq:eqn2,eq:eqn6} are
satisfied.
Then, from~\eqref{eq:formulaX1} we get:
\[
    e_{i\, j+1\,k-1}\big((i+1)a+(j+1)b+kc\bigr) = 0
\]
and, if we write $i-1$ in place of $i$ and $j-1$ in place of
$j$, we get that $e_{i-1\, j, k-1}(ia+jb+kc) = 0$ for $i>0, j>0, k>0$ and
$i+j+k = n$.
But this formula, thanks to~\eqref{eq:eqn3}, also holds for
$j=0$, hence \eqref{eq:eqn2} and \eqref{eq:eqn6} imply that the first
set of \Cref{eqA1_3} are satisfied.
Similarly, from~\eqref{eq:formulaX2} follows the second set of \Cref{eqA1_3}.
Now, suppose that, conversely, \Cref{eqA1_3} are satisfied.
Then, the right hand side of~\eqref{eq:formulaX1} is zero and since also \eqref{eqA4}
holds,~\eqref{eq:eqn2} is satisfied.
With similar computations, we can prove~\eqref{eq:eqn4}.
\end{proof}

\begin{figure}
    \centering
    \begin{tikzpicture}[fvertex/.style={circle,inner sep=0pt,minimum size=3pt,fill=black,draw=black,outer sep=1pt},edge/.style={line width=1.5pt,black!60!white}]
    \begin{scope}[scale=0.28]
    	\node[fvertex, label={[]90:$(4,0,0)$}] (0) at (-20, 0) {};
    	\node[fvertex, label={[]90:$(3,1,0)$},label={[]270:$f_{200}$}] (1) at (-10, 0) {};
    	\node[fvertex, label={[]90:$(2,2,0)$},label={[]270:$f_{110}$}] (2) at (0, 0) {};
    	\node[fvertex, label={[]90:$(1,3,0)$},label={[]270:$f_{020}$}] (3) at (10, 0) {};
    	\node[fvertex, label={[]90:$(0,4,0)$}] (4) at (20, 0) {};
    	\node[fvertex, label={[]90:$(3,0,1)$},label={[]0:$e_{200}$}] (5) at (-15, 5) {};
    	\node[fvertex, label={[]90:$(2,1,1)$},label={[]0:$e_{110}$},label={[]270:$f_{101}$}] (6) at (-5, 5) {};
    	\node[fvertex, label={[]90:$(1,2,1)$},label={[]0:$e_{020}$},label={[]270:$f_{011}$}] (7) at (5, 5) {};
    	\node[fvertex, label={[]90:$(0,3,1)$},label={[]0:$d_{020}$}] (8) at (15, 5) {};
    	\node[fvertex, label={[]90:$(2,0,2)$},label={[]0:$e_{101}$}] (9) at (-10, 10) {};
    	\node[fvertex, label={[]90:$(1,1,2)$},label={[]0:$e_{011}$},label={[]270:$f_{002}$}] (10) at (0, 10) {};
    	\node[fvertex, label={[]90:$(0,2,2)$},label={[]0:$d_{011}$}] (11) at (10, 10) {};
    	\node[fvertex, label={[]90:$(1,0,3)$},label={[]0:$e_{002}$}] (12) at (-5, 15) {};
    	\node[fvertex, label={[]90:$(0,1,3)$},label={[]0:$d_{002}$}] (13) at (5, 15) {};
    	\node[fvertex, label={[]90:$(0,0,4)$}] (14) at (0, 20) {};
    \end{scope}
    \end{tikzpicture}
    \caption{The grid for $n=4$ with the corresponding monomials.}
    \label{fig:grid}
\end{figure}
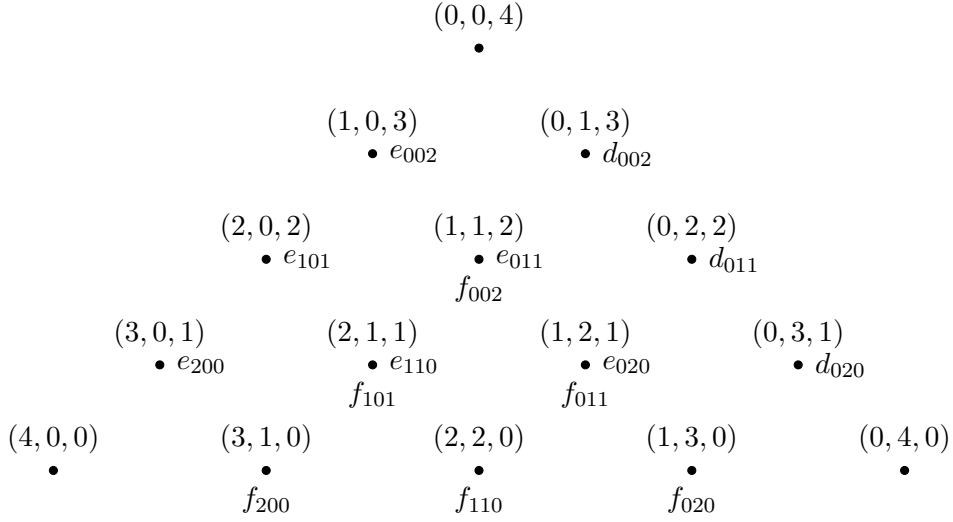

\subsection{Solving \texorpdfstring{\eqref{eqA1_3}}{(15)}, \texorpdfstring{$\dots$}{...}, \texorpdfstring{\eqref{eqA4}}{(18)}}
\label{solving}

Now we determine the explicit solutions of the four sets of equations of \Cref{prop:4eqs}.
To begin with, we determine the solutions of \Cref{eqA1_3}.
One trivial solution is given by $a=b=c=0$, but we discard this case, since we assume $a\not=0$.
Another trivial solution is given by all the coefficients of $D$, $E$ and $F$ equal to zero.
Let $L(i, j, k)$ be the linear form $ia+jb+kc$ in $a, b, c$ and we associate to it the vector $(i, j, k)\in \mathbb{R}^3$.
When $L(i, j, k)$ varies among all the possible cases, the vectors $(i, j, k)$ describe the set~$\calT'$,
which is the $n$-triangle grid $\calT_n$ without the three points $(n, 0, 0)$, $(0, n, 0)$ and $(0, 0, n)$.
Let $\mathcal{U}$ be the set of all the coefficients of $D$, $E$, $F$ together with the element $0$, i.e.:
\[
    \mathcal{U} = \{d_{0jk} \mid (0, j, k) \in \calT_{n-2}\} \cup \{e_{ijk} \mid (i, j, k) \in \calT_{n-2}\}
    \cup \{f_{ijk} \mid (i, j, k) \in \calT_{n-2}\}
    \cup \{0\}
\]
and consider the maps
$\phi_D, \phi_E, \phi_F \colon \calT' \longrightarrow \mathcal{U}$ given by:
\begin{align}
    \phi_D(i, j, k) &=
    \left\{
    \begin{array}{ll}
        d_{0\, j-1\, k-1} & \mbox{if} \ i=0, \ j, k > 0\\
        0 & \mbox{otherwise}
    \end{array}
    \right.
    \label{def:phi_D}\\
    \phi_E(i, j, k) &=
    \left\{
    \begin{array}{ll}
        e_{i-1\, j\, k-1} & \mbox{if} \ i, k > 0\\
        0 & \mbox{otherwise}
    \end{array}
    \right.
    \label{def:phi_E}\\
    \phi_F(i, j, k) &=
    \left\{
    \begin{array}{ll}
        f_{i-1\, j-1\, k} & \mbox{if} \ i, j > 0\\
        0 & \mbox{otherwise}
    \end{array}
    \right.
    \label{def:phi_F}
\end{align}
If $H$ is a subset of $\calT'$, we say that a coefficient $\gamma \in \mathcal{U} \setminus \{0\}$ is
\emph{tied} to $H$ if $\gamma \not \in \phi_D(H)$, $\gamma \not \in \phi_E(H)$, and
$\gamma \not \in \phi_F(H)$.

Similarly as in \Cref{def:grid}, a subset $H$ of $\calT'$ is called a \emph{maximal segment} (of $\calT'$)
if $H$ has at least two points and is a maximal subset of collinear points of $\calT'$.

\begin{figure}
    \centering

    \begin{tikzpicture}[scale = 0.6]
        \coordinate (A) at (0,0);
        \coordinate (A1) at ($(A)+(2,0)$);
        \coordinate (A2) at ($(A)+(4,0)$);
        \coordinate (A3) at ($(A)+(6,0)$);
        \coordinate (B1) at ($(A)+(1,1.732)$);
        \coordinate (B2) at ($(A)+(3,1.732)$);
        \coordinate (B3) at ($(A)+(5,1.732)$);
        \coordinate (B4) at ($(A)+(7,1.732)$);
        \coordinate (C1) at ($(A)+(2,2*1.732)$);
        \coordinate (C2) at ($(A)+(4,2*1.732)$);
        \coordinate (C3) at ($(A)+(6,2*1.732)$);
        \coordinate (D1) at ($(A)+(3,3*1.732)$);
        \coordinate (D2) at ($(A)+(5,3*1.732)$);
        \fill (A1) circle (0.15);
        \fill (A2) circle (0.15);
        \fill (A3) circle (0.15);
        \fill (B1) circle (0.15);
        \fill (B2) circle (0.15);
        \fill (B3) circle (0.15);
        \fill (B4) circle (0.15);
        \fill (C1) circle (0.15);
        \fill (C2) circle (0.15);
        \fill (C3) circle (0.15);
        \fill (D1) circle (0.15);
        \fill (D2) circle (0.15);
        \draw[dashed] (A)--($(A)+(8,0)$);
        \draw[dashed] ($(A)+(8,0)$)--($(A)+(4, 4*1.732)$);
        \draw[dashed] (A)--($(A)+(4, 4*1.732)$);
    \end{tikzpicture}
    \quad \quad
    \begin{tikzpicture}[scale = 0.6]
        \coordinate (A) at (0,0);
        \coordinate (A1) at ($(A)+(2,0)$);
        \coordinate (A2) at ($(A)+(4,0)$);
        \coordinate (A3) at ($(A)+(6,0)$);
        \coordinate (B1) at ($(A)+(1,1.732)$);
        \coordinate (B2) at ($(A)+(3,1.732)$);
        \coordinate (B3) at ($(A)+(5,1.732)$);
        \coordinate (B4) at ($(A)+(7,1.732)$);
        \coordinate (C1) at ($(A)+(2,2*1.732)$);
        \coordinate (C2) at ($(A)+(4,2*1.732)$);
        \coordinate (C3) at ($(A)+(6,2*1.732)$);
        \coordinate (D1) at ($(A)+(3,3*1.732)$);
        \coordinate (D2) at ($(A)+(5,3*1.732)$);
        
        \fill (A1) circle (0.15);
        \fill (A2) circle (0.15);
        \fill (A3) circle (0.15);
        \fill (B1) circle (0.15);
        \fill (B2) circle (0.15);
        \fill (B3) circle (0.15);
        \fill (B4) circle (0.15);
        \fill (C1) circle (0.15);
        \fill (C2) circle (0.15);
        \fill (C3) circle (0.15);
        \fill (D1) circle (0.15);
        \fill (D2) circle (0.15);
        \draw[dashed] (A)--($(A)+(8,0)$);
        \draw[dashed] ($(A)+(8,0)$)--($(A)+(4, 4*1.732)$);
        \draw[dashed] (A)--($(A)+(4, 4*1.732)$);
        
        \draw[line width=0.3mm] (A1) -- (A3);
        \draw[line width=0.3mm] (B1) -- (B4);
        \draw[line width=0.3mm] (C1) -- (C3);
        \draw[line width=0.3mm] (D1) -- (D2);
        
        \draw[line width=0.3mm] (B1) -- (D1);
        \draw[line width=0.3mm] (A1) -- (D2);
        \draw[line width=0.3mm] (A2) -- (C3);
        \draw[line width=0.3mm] (A3) -- (B4);
        
        \draw[line width=0.3mm] (A1) -- (B1);
        \draw[line width=0.3mm] (A2) -- (C1);
        \draw[line width=0.3mm] (A3) -- (D1);
        \draw[line width=0.3mm] (B4) -- (D2);
        
        \draw[line width=0.3mm] (A1) -- (C1);
        \draw[line width=0.3mm] (A1) -- (D1);
        \draw[line width=0.3mm] (A1) -- (C3);
        \draw[line width=0.3mm] (A1) -- (B4);
        \draw[line width=0.3mm] (A1) -- (B3);

        \draw[line width=0.3mm] (A2) -- (B1);
        \draw[line width=0.3mm] (A2) -- (D1);
        \draw[line width=0.3mm] (A2) -- (D2);
        \draw[line width=0.3mm] (A2) -- (B4);
        \draw[line width=0.3mm] (A2) -- (C2);
        
        \draw[line width=0.3mm] (A3) -- (B1);
        \draw[line width=0.3mm] (A3) -- (B2);
        \draw[line width=0.3mm] (A3) -- (C1);
        \draw[line width=0.3mm] (A3) -- (D2);
        \draw[line width=0.3mm] (A3) -- (C3);
        
        \draw[line width=0.3mm] (B1) -- (C3);
        \draw[line width=0.3mm] (B1) -- (D2);
        
        \draw[line width=0.3mm] (B2) -- (D1);
        \draw[line width=0.3mm] (B2) -- (C3);
        
        \draw[line width=0.3mm] (B3) -- (C1);
        \draw[line width=0.3mm] (B3) -- (D2);
        
        \draw[line width=0.3mm] (B4) -- (C1);
        \draw[line width=0.3mm] (B4) -- (C3);
        \draw[line width=0.3mm] (B4) -- (D1);
        
        \draw[line width=0.3mm] (B2) -- (C3);
        \draw[line width=0.3mm] (C1) -- (D2);
        \draw[line width=0.3mm] (C3) -- (D1);
        
    \end{tikzpicture}
    \caption{Case $n=4$. The set $\calT'$ and its maximal segments.}
    \label{fig:Tprime_maximal_segments}
\end{figure}
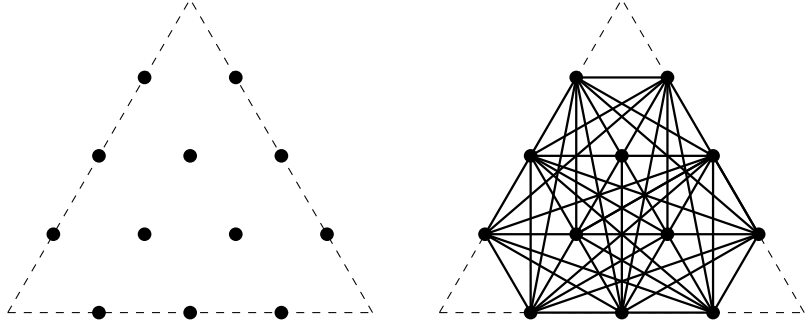

If $H$ is a maximal segment of $\calT'$ or $H$ is given by a single point $(i, j, k) \in \calT'$, then we associate to $H$ the following two sets:
\begin{align}
    m(H) &= \{\gamma \in \mathcal{U}\setminus \{0\} \mid \gamma \mbox{ is tied to } H\},
    \label{def:id_monom}\\
    r(H) &= \{ai+bj+ck \mid  (i, j, k) \in H\}.
\end{align}
The first set consists of some coefficients of the polynomials~$D$, $E$, $F$, while the second one consists of linear forms in $a, b, c$.
\begin{prop}
\label{prop:dec_prim_sol}
The primary decomposition in $\C [a,b,c,\{d_{ijk}\},\{e_{ijk}\},\{f_{ijk}\}]$ of the radical of the ideal
generated by the left hand side of \Cref{eqA1_3}, with $a \neq 0$, is given by the prime ideals
\begin{itemize}
    \item
    $\bigl( \{d_{ijk}\}\cup \{e_{ijk}\}\cup \{f_{ijk}\} \bigr)$;
    \item
    $\bigl( m(H) \cup r(H) \bigr)$
    where $H = \{(i, j, k)\}$ with $(i, j, k) \in \calT'$;
    \item
    $\bigl( m(H) \cup r(H) \bigr)$
    where $H$ is a maximal segment of $\calT'$ whose points are not aligned with~$(n, 0, 0)$.
\end{itemize}
\end{prop}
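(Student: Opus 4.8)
The plan is to read the minimal primes off the combinatorial shape of the equations. Write $\mathcal{I}$ for the ideal generated by the left-hand sides of \eqref{eqA1_3}. By \eqref{def:phi_D}, \eqref{def:phi_E}, \eqref{def:phi_F}, its generators are exactly the products $\phi_D(p)\,L(p)$, $\phi_E(p)\,L(p)$, $\phi_F(p)\,L(p)$ as $p=(i,j,k)$ runs over $\calT'$ and $L(p)=ia+jb+kc$, discarding the cases $\phi_\bullet(p)=0$; thus every generator is a product of a coefficient variable and a linear form in the \emph{disjoint} variable set $\{a,b,c\}$. Since $\sqrt{\mathcal{I}}$ is the intersection of the minimal primes of $\mathcal{I}$ and we work under the standing hypothesis $a\neq 0$, it suffices to enumerate the minimal primes $P\supseteq\mathcal{I}$ with $a\notin P$. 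Given such a $P$, set $S:=\{p\in\calT'\mid L(p)\in P\}$. For each $p\notin S$ and each $\bullet\in\{D,E,F\}$ with $\phi_\bullet(p)\neq 0$, primality forces $\phi_\bullet(p)\in P$, so $P\supseteq P_S$, where $P_S$ is generated by $\{L(p)\mid p\in S\}$ together with all nonzero $\phi_\bullet(q)$ with $q\in\calT'\setminus S$. Conversely $P_S\supseteq\mathcal{I}$, and $P_S$ is prime because it is generated by linear forms lying in the two disjoint groups of variables $\{a,b,c\}$ and the coefficient variables, so the quotient by $P_S$ is again a polynomial ring. Hence the sought minimal primes are the minimal elements of $\{P_S\mid a\notin P_S\}$.

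The geometric heart of the proof is the equivalence $a\in P_S\iff(n,0,0)\in\operatorname{span}_{\R}S$. Indeed $a$ involves no coefficient variable, so $a\in P_S$ iff $a\in\langle L(p)\mid p\in S\rangle$, and $(i,j,k)\mapsto L(i,j,k)$ is a linear isomorphism from $\R^3$ onto the space of linear forms in $a,b,c$. Because every point of $\calT'$ lies in the affine plane $\Pi=\{i+j+k=n\}$, which is not contained in any proper linear subspace and meets each plane through the origin in at most a line, $\operatorname{span}_{\R}S$ contains $(n,0,0)$ precisely when $S$ has three non-collinear points, or $S$ is collinear and the line through it passes through $(n,0,0)$. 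Therefore the sets $S$ with $a\notin P_S$ are exactly: $\emptyset$; the singletons $\{p\}$ with $p\in\calT'$; and the sets of at least two collinear points of $\calT'$ not aligned with $(n,0,0)$.

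It then remains to extract the minimal $P_S$ and to rewrite them in the notation of the statement. If $S\subseteq S'$ are collinear with $|S|\geq 2$, then $\langle L(p)\mid p\in S\rangle=\langle L(p)\mid p\in S'\rangle$ — on a fixed line every $L(p)$ is an affine combination of two fixed ones — while $\calT'\setminus S\supseteq\calT'\setminus S'$ enlarges the coefficient part, so $P_{S'}\subseteq P_S$. Hence among the admissible nonempty $S$ only the singletons and the maximal segments of $\calT'$ not aligned with $(n,0,0)$ can yield minimal primes, while $S=\emptyset$ gives $P_\emptyset=\bigl\langle\{d_{ijk}\}\cup\{e_{ijk}\}\cup\{f_{ijk}\}\bigr\rangle$. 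Using that every $p\in\calT'$ has at least one nonzero $\phi_\bullet(p)$ and that each coefficient variable equals $\phi_\bullet(q)$ for a \emph{unique} $q\in\calT'$ — both immediate from \eqref{def:phi_D}, \eqref{def:phi_E}, \eqref{def:phi_F} — one checks that these primes are pairwise incomparable ($P_\emptyset$ has no linear part in $a,b,c$; two distinct lines have distinct linear spans; a singleton has a strictly smaller $L$-part but a strictly larger coefficient part than a segment containing it) and that, for $H$ a maximal segment or a single point, $\langle L(p)\mid p\in H\rangle=\langle r(H)\rangle$ and $\bigl\langle\phi_\bullet(q)\mid q\in\calT'\setminus H\bigr\rangle=\langle m(H)\rangle$, whence $P_H=\langle m(H)\cup r(H)\rangle$. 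This is precisely the claimed list.

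The step I expect to be the main obstacle is the geometric translation of the second paragraph: pinning down exactly which subsets $S$ keep $a$ outside $P_S$ and, above all, recognising that the minimal nonempty collinear configurations of $\calT'$ one should retain are the maximal segments not aligned with $(n,0,0)$. Once this is in place, the containment comparison between the various $P_S$ and the dictionary $S\leftrightarrow(m(H),r(H))$ are routine bookkeeping with the definitions of $\phi_D,\phi_E,\phi_F$, $m$, and $r$.
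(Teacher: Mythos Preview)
Your proof is correct and follows essentially the same approach as the paper: both classify the minimal primes by the set of points $p\in\calT'$ for which the linear form $L(p)=ia+jb+kc$ lies in the prime (equivalently, vanishes on the corresponding solution), observe that this set must be collinear and not aligned with $(n,0,0)$ because of the hypothesis $a\neq 0$, and then pass to maximal segments. Your write-up is in fact more complete than the paper's sketch---you carefully verify that each $P_S$ is prime, that the minimal primes are exactly the minimal $P_S$'s, and that the listed primes are pairwise incomparable, points the paper leaves implicit.
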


\begin{proof}
If the system~\eqref{eqA1_3} has a nontrivial solution, then there exist points $(i_1, j_1, k_1), \dotsc, (i_r, j_r, k_r)$ in~$\calT'$ such that
\begin{equation}
\label{eq:lin_sist}
    \left\{
        \begin{aligned}
            a \, i_1 + b \, j_1 + c \, k_1 &= 0\\
            &\vdots \\
            a \, i_r + b \, j_r + c \, k_r &= 0.
        \end{aligned}
    \right.
\end{equation}
This linear system must have a non-trivial solution in $a, b, c$,
so the rank of the associated matrix must be either $1$ or $2$.

If the rank is $1$, then $r=1$, because any two points of $\calT'$ are linearly independent.
In this case, if we set $H = \{ (i_1, j_1, k_1) \}$, the solution to equations \eqref{eqA1_3} is given
by $ai_1+bj_1+ck_1 = 0$ and by
$\gamma = 0$, where $\gamma$ are the coefficients of $D$ or of $E$ or of $F$ tied to $H$;
hence, this solution gives the prime ideal generated by $m(H)\cup r(H)$.

If the rank of the linear system \eqref{eq:lin_sist} is $2$, the points $(i_1, j_1, k_1), \dotsc, (i_r, j_r, k_r)$ are collinear.
Moreover, if $(i_s, j_s, k_s)$ is another point of $\calT'$ collinear with the $r$ previous points, then the linear system~\eqref{eq:lin_sist}
plus the equation $ai_s+bj_s+ck_s = 0$ has the same solutions of~\eqref{eq:lin_sist}.
Therefore, if we have some collinear points in $\calT'$, we can complete them to a maximal set~$H$ of collinear points contained in~$\calT'$.
As soon as the linear equations corresponding to the points of $H$ are satisfied, to find the solutions of the desired equations 
we set to zero all the coefficients of $D$, $E$ and $F$ that are tied to $H$; 
hence, we get again the prime ideal generated by~$m(H) \cup r(H)$.

Finally, if some collinear points of $\calT'$ give~\eqref{eq:lin_sist} of rank~$2$ and these points are collinear with $(n, 0, 0)$,
then $na = 0$, which is not possible.
\end{proof}
\begin{figure}[ht]
    \centering
    \begin{tikzpicture}[scale = 0.7]
        \coordinate (A) at (0,0);
        \coordinate (A1) at ($(A)+(2,0)$);
        \coordinate (A2) at ($(A)+(4,0)$);
        \coordinate (A3) at ($(A)+(6,0)$);
        \coordinate (B1) at ($(A)+(1,1.732)$);
        \coordinate (B2) at ($(A)+(3,1.732)$);
        \coordinate (B3) at ($(A)+(5,1.732)$);
        \coordinate (B4) at ($(A)+(7,1.732)$);
        \coordinate (C1) at ($(A)+(2,2*1.732)$);
        \coordinate (C2) at ($(A)+(4,2*1.732)$);
        \coordinate (C3) at ($(A)+(6,2*1.732)$);
        \coordinate (D1) at ($(A)+(3,3*1.732)$);
        \coordinate (D2) at ($(A)+(5,3*1.732)$);

        \fill (A1) circle (0.15);
        \fill (A2) circle (0.15);
        \fill (A3) circle (0.15);
        \fill (B1) circle (0.15);
        \fill (B2) circle (0.15);
        \fill (B3) circle (0.15);
        \fill (B4) circle (0.15);
        \fill (C1) circle (0.15);
        \fill (C2) circle (0.15);
        \fill (C3) circle (0.15);
        \fill (D1) circle (0.15);
        \fill (D2) circle (0.15);
        
        \draw[line width=0.3mm, dashed] (A1) -- (A3);
        \draw[line width=0.3mm] (B1) -- (B4);
        \draw[line width=0.3mm] (C1) -- (C3);
        \draw[line width=0.3mm] (D1) -- (D2);
        
        \draw[line width=0.3mm, dashed] (B1) -- (D1);
        \draw[line width=0.3mm] (A1) -- (D2);
        \draw[line width=0.3mm] (A2) -- (C3);
        \draw[line width=0.3mm] (A3) -- (B4);
        
        \draw[line width=0.3mm] (A1) -- (B1);
        \draw[line width=0.3mm] (A2) -- (C1);
        \draw[line width=0.3mm] (A3) -- (D1);
        \draw[line width=0.3mm] (B4) -- (D2);
        
        \draw[line width=0.3mm] (A1) -- (C1);
        \draw[line width=0.3mm] (A1) -- (D1);
        \draw[line width=0.3mm] (A1) -- (C3);
        \draw[line width=0.3mm] (A1) -- (B4);
        \draw[line width=0.3mm] (A1) -- (B3);

        \draw[line width=0.3mm] (A2) -- (B1);
        \draw[line width=0.3mm] (A2) -- (D1);
        \draw[line width=0.3mm] (A2) -- (D2);
        \draw[line width=0.3mm] (A2) -- (B4);
        \draw[line width=0.3mm] (A2) -- (C2);
        
        \draw[line width=0.3mm] (A3) -- (B1);
        \draw[line width=0.3mm] (A3) -- (B2);
        \draw[line width=0.3mm] (A3) -- (C1);
        \draw[line width=0.3mm] (A3) -- (D2);
        \draw[line width=0.3mm] (A3) -- (C3);
        
        \draw[line width=0.3mm] (B1) -- (C3);
        \draw[line width=0.3mm] (B1) -- (D2);
        
        \draw[line width=0.3mm] (B2) -- (D1);
        \draw[line width=0.3mm, dashed] (B2) -- (C3);
        
        \draw[line width=0.3mm] (B3) -- (C1);
        \draw[line width=0.3mm] (B3) -- (D2);
        
        \draw[line width=0.3mm] (B4) -- (C1);
        \draw[line width=0.3mm] (B4) -- (C3);
        \draw[line width=0.3mm] (B4) -- (D1);
        
        \draw[line width=0.3mm] (C1) -- (D2);
        \draw[line width=0.3mm] (C3) -- (D1);
        
    \end{tikzpicture}
    
    \caption{The primary decomposition given in \Cref{prop:dec_prim_sol} (for the case $n=4$).
    The black circle are the points of $\calT'$, the lines are the maximal segments of $\calT'$,
    the three dashed lines are three maximal segments that have to be excluded since their
    points are aligned with $(n, 0, 0)$.}
    \label{fig:primary_decomposition}
\end{figure}
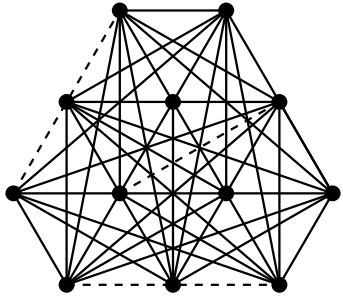

An immediate consequence of the above proposition is the following:

\begin{corollary}
\label{cor:primary_decomposition}
The primary decomposition of the radical of the ideal generated by the left hand side of~\eqref{eqA1_3} and~\eqref{eqA4} is given by the ideals described in~\Cref{prop:dec_prim_sol},
where we make the substitutions $e_{i-1\, j\,k-1} = -(k/j) f_{i-1\, j-1\, k}$ when $i, j, k > 0$.
\end{corollary}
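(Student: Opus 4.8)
The plan is to read \eqref{eqA4} as a system of linear equations that eliminates variables, and then to push the statement of \Cref{prop:dec_prim_sol} through this elimination. I would set $S := \C[a,b,c,\{d_{ijk}\},\{e_{ijk}\},\{f_{ijk}\}]$, let $I\subseteq S$ be the ideal generated by the left hand sides of \eqref{eqA1_3}, and let $L\subseteq S$ be the ideal generated by the linear forms $j\,e_{i-1\,j\,k-1}+k\,f_{i-1\,j-1\,k}$ on the left of \eqref{eqA4} (with $i,j,k>0$ and $i+j+k=n$). The first observation is that these forms are triangular: each involves a variable $e_{i-1\,j\,k-1}$ of positive middle index with the invertible coefficient $j$, and distinct equations involve distinct such variables; hence $L$ is prime, $S/L$ is again a polynomial ring, and the quotient map $S\to S/L$ is precisely the substitution $e_{i-1\,j\,k-1}\mapsto -(k/j)f_{i-1\,j-1\,k}$, which erases the $e$-variables with positive middle index.

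Next I would pass to $S/L$. Since $L\subseteq I+L$ and $L$ is prime, $\sqrt{I+L}\supseteq L$, so computing $\sqrt{I+L}$ amounts to computing $\sqrt{\,\overline I\,}$ in $S/L$, where $\overline I$ is the image of $I$; a quick check shows that $\overline I$ is generated by the substituted equations, the substituted first family of \eqref{eqA1_3} (for $j>0$) being a scalar multiple of the second. On the level of varieties, $V(I+L)=V(I)\cap V(L)$ and, by \Cref{prop:dec_prim_sol}, $V(I)=\bigcup_\alpha V(P_\alpha)$ over the listed primes, so $V(I+L)=\bigcup_\alpha\bigl(V(P_\alpha)\cap V(L)\bigr)$. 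Each $P_\alpha$ is generated by coordinate variables (the elements of $m(H)$) together with linear forms in $a,b,c$ (the elements of $r(H)$), and $L$ is generated by linear forms as well; therefore $V(P_\alpha)\cap V(L)=V(P_\alpha+L)$ is a linear subspace, hence irreducible, $P_\alpha+L$ is prime, and $\sqrt{I+L}=\bigcap_\alpha(P_\alpha+L)$. The component aligned with $(n,0,0)$ is discarded exactly as in \Cref{prop:dec_prim_sol} because $a\neq 0$.

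Finally I would identify $P_\alpha+L$ with ``$P_\alpha$ after the substitutions''. The key point is that, for $i,j,k>0$, one has $e_{i-1\,j\,k-1}\in m(H)$ if and only if $(i,j,k)\notin H$ if and only if $f_{i-1\,j-1\,k}\in m(H)$ --- immediate from \eqref{def:phi_E}, \eqref{def:phi_F}, and the definition of $m(H)$ --- so an eliminated $e$-variable lies in $m(H)$ exactly when its $f$-partner does. Hence modulo $L$ every such $e$-generator of $P_\alpha+L$ becomes a scalar multiple of a generator already present, and $P_\alpha+L$ is literally the ideal obtained from $P_\alpha$ by the stated substitutions. One then has to see that the family of components is unchanged: distinct points of $\calT'$ still yield distinct linear forms in $r(H)$, and distinct maximal segments still yield incomparable pairs $\bigl(m(H),r(H)\bigr)$ once the $a,b,c$-part of a linear form is separated from its coordinate part. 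I expect this last point --- checking that erasing the $e$-variables of positive middle index neither merges two components of \Cref{prop:dec_prim_sol} nor makes one redundant in another --- to be the only real obstacle; the rest is formal manipulation of radicals of ideals generated by linear forms.
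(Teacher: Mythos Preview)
Your argument is correct and is, in fact, far more detailed than the paper's own treatment: the paper simply labels the corollary ``an immediate consequence'' of \Cref{prop:dec_prim_sol} and gives no proof at all. What you have written is exactly the kind of justification one would supply if asked to spell this out, and the steps (triangularity of $L$, passage to $S/L$, intersecting each component with $V(L)$, primality of $P_\alpha+L$ as an ideal of linear forms) are all sound.

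The one issue you flag --- whether two substituted components could coincide or become comparable --- resolves without difficulty. The substitution only touches the $e$-variables with positive middle index; it leaves the linear forms in $a,b,c$ generating $r(H)$ untouched. For distinct singletons the forms $ia+jb+kc$ are distinct (no two points of $\calT'$ are proportional), and for distinct maximal segments the ideals $\bigl(r(H)\bigr)$ cut out distinct planes through the origin; a singleton and a segment are distinguished by the rank of the $a,b,c$-part. Hence the $r(H)$-data alone already separates all the components after the substitution, and no merging or redundancy can occur. Your key observation that, for $i,j,k>0$, $e_{i-1\,j\,k-1}\in m(H)\iff f_{i-1\,j-1\,k}\in m(H)$ then shows that the $m(H)$-part behaves exactly as claimed. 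So the ``only real obstacle'' you anticipate is not one.
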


\begin{figure}[ht]
    \centering
    \begin{tikzpicture}[scale = 0.6]
        \coordinate (A) at (0,0);
        
        \fill ($(A)+(2,0)$) circle (0.15);
        \fill ($(A)+(4,0)$) circle (0.15);
        \fill ($(A)+(6,0)$) circle (0.15);
        \fill ($(A)+(1,1.732)$) circle (0.15);
        \fill ($(A)+(3,1.732)$) circle (0.15);
        \fill ($(A)+(5,1.732)$) circle (0.15);
        \fill ($(A)+(7,1.732)$) circle (0.15);
        \fill ($(A)+(2,2*1.732)$) circle (0.15);
        \fill ($(A)+(4,2*1.732)$) circle (0.15);
        \fill ($(A)+(6,2*1.732)$) circle (0.15);
        \fill ($(A)+(3,3*1.732)$) circle (0.15);
        \fill ($(A)+(5,3*1.732)$) circle (0.15);
        \draw[dashed] (A)--($(A)+(8,0)$);
        \draw[dashed] ($(A)+(8,0)$)--($(A)+(4, 4*1.732)$);
        \draw[dashed] (A)--($(A)+(4, 4*1.732)$);
        
        \draw[line width=0.3mm] ($(A)+(1, 1.732)$) -- ($(A)+(7, 1.732)$);
        \draw[line width=0.3mm] ($(A)+(2,0)$) -- ($(A)+(6,2*1.732)$);
        
        \node at (6, 1.3) {$h_1$};
        \node at (5, 3.2) {$h_2$};
    \end{tikzpicture}

    \caption{Two examples of maximal segments.}
    \label{fig:example_max_segments}
\end{figure}
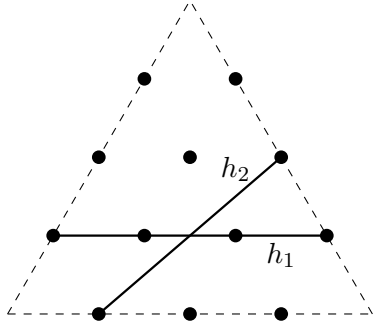

\begin{es}
\label{es:running_example}
Consider the case $n=4$ and the following two maximal segments
(see \Cref{fig:example_max_segments}):
\begin{align*}
    h_1 &= \{(3, 0, 1), (2, 1, 1), (1, 2, 1), (0, 3, 1) \}, \\
    h_2 &= \{(3, 1, 0), (0, 2, 2)\}.
\end{align*}
According to \Cref{def:phi_D,def:phi_E,def:phi_F},
we have:
\[
    \phi_D(h_1) = \{0, d_{020}\}, \quad
    \phi_E(h_1) = \{0, e_{020}, e_{110}, e_{200}\}, \quad
    \phi_F(h_1) = \{0, f_{011}, f_{101}\}
\]
and
\[
    \phi_D(h_2) = \{0, d_{011} \}, \quad
    \phi_E(h_2) = \{0 \}, \quad
    \phi_F(h_2) = \{0, f_{200} \}.
\]
Therefore:
\[
    m(h_1) = \{
        d_{002}, d_{011},
        e_{002}, e_{011}, e_{101},
        f_{002}, f_{020}, f_{110}, f_{200}
    \}
\]
and
\[
    m(h_2) = 
    \{
        d_{002}, d_{020},
        e_{002}, e_{011}, e_{020}, e_{101}, e_{110}, e_{200},
        f_{002}, f_{011}, f_{020}, f_{101}, f_{110}
    \}.
\]
Finally, we can compute the polynomials $D, E, F$ under the conditions $m(h_1)$ and $m(h_2)$:
\[
    D_{h_1} = d_{020}y^2, \quad
    E_{h_1} = e_{020}y^2 + e_{110}xy + e_{200}x^2, \quad
    F_{h_1} = f_{011}yz + f_{101}xz.
\]
\end{es}

It is possible to count the number of prime ideals in the primary decomposition given by~\Cref{prop:dec_prim_sol}.

\begin{lemma}
The number of elements in the primary decomposition given by \Cref{prop:dec_prim_sol} is
\[
  \frac{1}{2}(n-1)(n+4) + \alpha(n+1)-\sum_{i=2}^n\phi(i)-2\sum_{i = \lfloor n/2 \rfloor + 1}^n \phi(i)-1,
\]
where $\phi$ is the Euler totient function.
\end{lemma}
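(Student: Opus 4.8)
The plan is to read the answer directly off \Cref{prop:dec_prim_sol}. That proposition exhibits three kinds of minimal primes: the single ideal generated by all the coefficients of $D$, $E$, $F$; one ideal $\bigl(m(H)\cup r(H)\bigr)$ for each one-point set $H=\{(i,j,k)\}\subset\calT'$; and one ideal $\bigl(m(H)\cup r(H)\bigr)$ for each maximal segment $H$ of $\calT'$ whose points are not collinear with $(n,0,0)$. First I would check that these are pairwise distinct, so that the number of components equals the number of items on this list: the first ideal meets $\C[a,b,c]$ trivially, whereas each of the others contracts to a non-zero ideal of $\C[a,b,c]$, of height $1$ (a principal ideal $(ia+jb+kc)$) for the one-point primes and of height $2$ for the segment primes; since distinct points, resp.\ distinct lines, of the plane $i+j+k=n$ have distinct linear spans, these contractions are all different. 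Hence the quantity we want is
\[
1 \;+\; \#\calT' \;+\; \#\{\text{maximal segments of }\calT'\text{ not collinear with }(n,0,0)\}.
\]
Since $\#\calT_n=\binom{n+2}{2}$, we get $\#\calT'=\binom{n+2}{2}-3=\tfrac12(n-1)(n+4)$, the first term of the formula, and everything reduces to a combinatorial count of maximal segments.

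\textbf{Maximal segments versus lines of $\calT_n$.}
Next I would pass from segments to lines. Sending a maximal segment of $\calT'$ to the line it spans identifies the maximal segments of $\calT'$ with the lines of the plane $i+j+k=n$ meeting $\calT_n$ in at least two lattice points, the only lines lost upon deleting the three vertices being those through exactly one vertex and exactly one further grid point. (A line through two vertices is an edge of $\calT_n$, and for $n\ge 3$ it still carries $n-1\ge 2$ points of $\calT'$.) To control the lost lines, fix a vertex, say $(n,0,0)$: a line through it which is not one of the two incident edges has primitive direction $(-p,q,r)$ with $q,r\ge 1$ and $q+r=p$; a short $\gcd$ argument shows it carries exactly $\lfloor n/p\rfloor+1$ points of $\calT_n$, and for each $p$ with $2\le p\le n$ there are exactly $\phi(p)$ admissible directions. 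Such a line carries only two points precisely when $\lfloor n/p\rfloor=1$, i.e.\ $\lfloor n/2\rfloor<p\le n$. Summing over $p$ and over the three vertices (a lost line is lost for only one vertex), I obtain
\[
\#\{\text{maximal segments of }\calT'\}\;=\;\alpha(n+1)-3\!\!\sum_{\lfloor n/2\rfloor<p\le n}\!\!\phi(p),
\]
where $\alpha(n+1)$ is the number of lines meeting $\calT_n$ in at least two points.

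\textbf{Discarding the segments through $(n,0,0)$ and assembling.}
A maximal segment of $\calT'$ collinear with $(n,0,0)$ spans a line through $(n,0,0)$ with at least two points of $\calT'$, hence at least three points of $\calT_n$ unless it is an edge; by the description above this forces $p\le \lfloor n/2\rfloor$. Together with the two edges through $(n,0,0)$, these segments number $2+\sum_{p=2}^{\lfloor n/2\rfloor}\phi(p)$. Subtracting, and using $\sum_{p=2}^{\lfloor n/2\rfloor}\phi(p)+\sum_{\lfloor n/2\rfloor<p\le n}\phi(p)=\sum_{i=2}^n\phi(i)$, the number of admissible maximal segments equals
\[
\alpha(n+1)-\sum_{i=2}^n\phi(i)-2\sum_{i=\lfloor n/2\rfloor+1}^n\phi(i)-2.
\]
Adding $1+\tfrac12(n-1)(n+4)$ yields exactly the asserted formula.

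\textbf{Where the difficulty lies.}
The main obstacle is the lattice geometry behind the second step: identifying the primitive directions of lines through a vertex, proving the exact point count $\lfloor n/p\rfloor+1$, and recognising that the directions of each ``length'' are enumerated by the totient $\phi(p)$; and, above all, having at hand the global count $\alpha(n+1)$ of all lines through two points of the triangular grid, whose derivation is the genuinely non-routine input. One also has to keep track of the small-$n$ degeneracies (edges of $\calT_n$ becoming too short for $n\le 2$) in order to pin down the range of $n$ for which the closed form holds.
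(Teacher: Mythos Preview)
Your argument is correct and follows essentially the same route as the paper: both start from the three types of primes in \Cref{prop:dec_prim_sol}, compute $\#\calT'=\tfrac12(n-1)(n+4)$, take $\alpha(n+1)$ as the count of grid lines, and correct it via the totient sums coming from the primitive directions through the vertices. The only differences are cosmetic bookkeeping --- you first remove the ``lost'' two-point lines through all three vertices symmetrically and then discard the segments collinear with $(n,0,0)$, whereas the paper removes everything through $(n,0,0)$ in one stroke and then handles the other two vertices --- and your explicit verification that the listed primes are pairwise distinct, which the paper leaves implicit.
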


\begin{proof}
The number of elements of $\calT'$ is
\begin{align*}
    p_1 & = \frac{(n-1)(n+4)}{2}.
\end{align*}
The number of lines passing through at least two points of a triangular grid of side $n$ is a known number,
given by
\[
    \alpha(n) = 3 \sum_{j=1}^{n-1} \phi(j)\left(\binom{n-j+1}{2}-\binom{n-2j+1}{2}\right),
\]
see \url{https://oeis.org/A244504} 
(here, we assume $\binom{i}{2}$ is zero if $i \leq 0$).
The points of the lines passing through $N = (n, 0, 0)$ and a point
$Q=(u, v, w) \in \calT'$ (hence with $u+v+w = n$ and $u, v, w \geq 0$) are given by
$N+\lambda (Q-N)$.
If these points have integer coordinates, they are of the form $N + m/d\, Q$,
where $d = \gcd(n-u, v, w)=
\gcd(v+w, v, w) = \gcd(v, w)$ and $m \in \mathbb{Z}$.
From this, we see that the lines of the grid
of side $n+1$ passing through $N$ and $Q$ and no other points of $\calT'$
are in one-to-one correspondence with the points $Q \in \calT'$ such that
$v, w$ are positive, coprime and are such that
$u+v = i$ for $i = 2, 3, \dotsc, n$.
Hence, their number is
\[
    p_2 = \sum_{i=2}^n\phi(i).
\]
These lines have to be subtracted from the computation of the maximal segments of $\calT'$.
Similarly, other lines of the grid of side $n+1$ that do not give maximal segments of $\calT'$
are the lines passing through the point $(0, n, 0)$ and a point $Q = (u, v, w) \in \calT'$ such that
$u, w$ are coprime and $v \geq \lfloor n/2 \rfloor+1$, since these lines contain only one point of
$\calT'$. The number of these lines is given by:
\[
    p_3 = \sum_{i = \lfloor n/2 \rfloor +1}^n \phi(i).
\]
A similar consideration can be given for the lines through $(0, 0, n)$.
Finally, also the two maximal segments
of $\calT'$ given by the line through the two points $(n, 0, 0), (0, n, 0)$ and the maximal segment
given by the line through the points $(n, 0, 0), (0, 0, n)$ must be excluded.
From these computations, we see
that the number of prime ideals of the primary decomposition of~\Cref{prop:dec_prim_sol} is
$1+p_1+\alpha(n+1)-p_2-2p_3-2$, i.e.:
\[
    \frac{1}{2}(n-1)(n+4) + \alpha(n+1) - \sum_{i=2}^n\phi(i) - 2\sum_{i = \lfloor n/2 \rfloor +1}^n \phi(i) - 1. \qedhere
\]
\end{proof}

\section{Restriction to non-concurrent lines solutions}
\label{nonconcurrent}

As observed in \Cref{rmk: no two zero entries}, we want to consider those $H$ given by~\Cref{prop:dec_prim_sol}
(or~\Cref{cor:primary_decomposition}) for which at most one among~$D$, $E$ and~$F$ is zero.
If a point $(i, j, k) \in \calT'$ is such that $i=0$,
then all coefficients of $E$ and all coefficients of $F$ are tied to $H = \{(i, j, k)\}$,
hence in this case $E$ and $F$ evaluate to zero.
Similarly, if $j=0$, then $D$ and $F$ evaluate to zero and if $k=0$, then $D$ and $E$ evaluate to zero.
Therefore, with our hypothesis, if $H$ is a singleton, it must be an interior point of $\calT'$.
For the same reason, the three sets of collinear points of $\calT'$ given by:
\begin{align*}
    H_1 &= \{(0, j, k) \mid j+k = n,\ j, k > 0\} \\
    H_2 &= \{(i, 0, k) \mid i+k = n,\ i, k > 0\} \\
    H_3 &= \{(i, j, 0) \mid i+j = n,\ i, j > 0\}
\end{align*}
are not acceptable maximal segments of $\calT'$ (the last two have already
been excluded, since collinear with the point $(n, 0, 0)$).
Let therefore
\[
    \mathcal{H}_1 :=
    \left\{
        \{(i, j, k)\}
        \mid
        (i,j,k) \in \calT' \text{ and } i, j, k > 0
    \right\}
\]
and
\[
    \mathcal{H}_2 :=
    \left\{
        H \subseteq \calT' \mid
        H \mbox{ is a maximal segment}
    \right\}
    \setminus \{H_1, H_2, H_3\} \,.
\]
We are interested in the values of $D, E, F$ under the conditions
$m(H)$ given by~\eqref{def:id_monom} with $H \in \mathcal{H}_1 \cup \mathcal{H}_2$.
Hence, we have to compute the reduced form of $D, E, F$ w.r.t.\ the monomial
ideal~$\bigl(m(H)\bigr)$.
We denote these values by $D_H, E_H, F_H$.
The discussion above immediately implies the following result.

\begin{prop}
\label{prop:explicit_formulas}
If $H \in \mathcal{H}_1 \cup \mathcal{H}_2$, we have:
\begin{equation}
\label{eq:DEF_reduced}
    \begin{aligned}
        D_H &= \sum_{(i, j, k) \in H}\phi_D(i, j, k) x^iy^{j-1}z^{k-1},\\
        E_H &= \sum_{(i, j, k) \in H}\phi_E(i, j, k) x^{i-1}y^jz^{k-1}\\
        F_H &= \sum_{(i, j, k) \in H}\phi_F(i, j, k) x^{i-1}y^{j-1}z^k
    \end{aligned}
\end{equation}
\end{prop}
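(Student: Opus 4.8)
The plan is to read off the claimed formulas directly from the definitions of the maps $\phi_D,\phi_E,\phi_F$ together with the computation of the monomial ideals $m(H)$, using nothing more than the discussion immediately preceding the statement. First I would recall that, by \Cref{cor:primary_decomposition}, the relevant ideal is $\bigl(m(H)\cup r(H)\bigr)$ with the substitution $e_{i-1\,j\,k-1}=-(k/j)f_{i-1\,j-1\,k}$ for interior triples; since we only want the reduced forms $D_H,E_H,F_H$ of $D$, $E$, $F$ and these polynomials do not involve $a,b,c$, the linear part $r(H)$ plays no role, and it suffices to reduce modulo the monomial ideal $\bigl(m(H)\bigr)$. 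By definition \eqref{def:id_monom}, a coefficient $\gamma\in\mathcal U\setminus\{0\}$ lies in $m(H)$ precisely when $\gamma$ is \emph{not} tied to $H$ fails, i.e.\ when $\gamma$ does \emph{not} belong to $\phi_D(H)\cup\phi_E(H)\cup\phi_F(H)$. Hence the surviving coefficients of $D$ after reduction are exactly those lying in $\phi_D(H)$, and likewise for $E$ and $F$.

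Next I would translate this coefficient-level statement back into monomials. A coefficient $d_{0\,j-1\,k-1}$ of $D$ survives iff it lies in $\phi_D(H)$, which by \eqref{def:phi_D} happens iff there is a point $(i,j,k)\in H$ with $i=0$ and $j,k>0$ whose image under $\phi_D$ is $d_{0\,j-1\,k-1}$; that monomial is $y^{j-1}z^{k-1}=x^iy^{j-1}z^{k-1}$, which is exactly the $(i,j,k)$-term in the claimed sum for $D_H$. The points of $H$ with $i=0$, $j,k>0$ contribute these terms, while points with $i>0$ (for which $\phi_D=0$) or with $j=0$ or $k=0$ contribute nothing, matching the convention that $\phi_D(i,j,k)=0$ outside the prescribed range. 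The same bookkeeping with \eqref{def:phi_E} and \eqref{def:phi_F} gives the formulas for $E_H$ and $F_H$: a surviving coefficient $e_{i-1\,j\,k-1}$ corresponds to a point $(i,j,k)\in H$ with $i,k>0$ and yields the monomial $x^{i-1}y^jz^{k-1}$, and a surviving $f_{i-1\,j-1\,k}$ corresponds to $(i,j,k)\in H$ with $i,j>0$ and yields $x^{i-1}y^{j-1}z^k$.

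The only point requiring a little care — and what I expect to be the main (though still minor) obstacle — is to check that no two distinct points of $H$ are assigned the same coefficient by a given $\phi$, so that the reduced form really is the full sum over $H$ with no cancellation or overcounting; this is immediate because each of $\phi_D,\phi_E,\phi_F$ is injective on its domain of non-vanishing (the assignment $(i,j,k)\mapsto(i-1,j,k-1)$ etc.\ is a bijection onto $\calT_{n-2}$), and the hypothesis $H\in\mathcal H_1\cup\mathcal H_2$ guarantees, via the discussion preceding the proposition, that $H$ avoids the degenerate configurations (singletons on the boundary, and the segments $H_1,H_2,H_3$) where two of $D,E,F$ would be forced to vanish. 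Assembling these observations gives precisely \eqref{eq:DEF_reduced}, completing the proof.
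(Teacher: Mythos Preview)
Your proposal is correct and matches the paper's approach: the paper offers no separate proof, stating only that ``the discussion above immediately implies the following result,'' and your argument is precisely a careful unpacking of that immediacy via the definitions of $m(H)$ and the maps $\phi_D,\phi_E,\phi_F$. One small remark: in your final paragraph, the hypothesis $H\in\mathcal H_1\cup\mathcal H_2$ is not actually needed to establish the formulas \eqref{eq:DEF_reduced} (injectivity of each $\phi$ on its non-vanishing locus is automatic, and the formulas hold for any $H$); that hypothesis only enters to guarantee that at most one of $D_H,E_H,F_H$ vanishes, which is a separate concern treated in the surrounding discussion rather than part of this proposition.
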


\begin{rmk}
In particular, if $H = \{(i, j, k)\} \in \mathcal{H}_1$, then
\begin{equation}
\label{eq:DEF_H_point}
    \begin{aligned}
        D_H &= 0,\\
        E_H &= e_{i-1\, j\, k-1} x^{i-1}y^jz^{k-1},\\
        F_H &= f_{i-1\, j-1\, k} x^{i-1}y^{j-1}z^k.
    \end{aligned}
\end{equation}
If $H \in \mathcal{H}_2$, then $D_H$ is $0$, unless there exist $j_0, k_0$ such
that $(0, j_0, k_0) \in H$; in this case
$D_H = d_{0\, j_0\, k_0}y^{j_0-1}z^{k_0-1}$.
We see therefore that in case
$H\in \mathcal{H}_1$, $D_H$ is zero, while $E_H$ and $F_H$ are non zero
monomials, while if $H\in \mathcal{H}_2$, $D_H$ is either $0$ or a monomial.
\end{rmk}

If we want the values of the polynomials $D$, $E$, $F$ for a solution
of all the four families of equations of~\Cref{prop:4eqs}, corresponding to a set $H \in \mathcal{H}_1 \cup \mathcal{H}_2$, it is enough to modify the function~$\phi_E$ defined in~\eqref{def:phi_E} as follows, taking into account \eqref{eqA4}:
\begin{equation}
\label{def:phi_Estar}
    \phi_E^*(i, j, k) =
    \left\{
        \begin{array}{ll}
            e_{i-1\, j\, k-1} & \mbox{if} \ i, k > 0, j = 0\\
            -(k/j)\, f_{i-1, j-1, k} & \mbox{if} \ i, j, k > 0\\
            0 & \mbox{otherwise}
        \end{array}
    \right.
\end{equation}
Hence, in this case we define $E_H^*$ by:
\[
    E_H^* = \sum_{(i, j, k) \in H}\phi_E^*(i, j, k)x^{i-1}y^jz^{k-1}.
\]

\begin{es}
In \Cref{es:running_example}, we have:
\[
    E_{h_1}^* = -\frac{1}{2}f_{011}y^2-f_{101}xy+e_{200}x^2
\]
and
\[
    D_{h_2} = d_{011}yz, \quad E_{h_2} = E_{h_2}^* = 0, \quad F_{h_2} = f_{200}x^2.
\]
\end{es}

Let us now describe the final shape of the generic polynomial $g$, not corresponding to concurrent lines, in terms of $a, b, c$ and the
coefficients of~$D$, $E$, and~$F$.

\begin{lemma}
Let $H \in \mathcal{H}_1 \cup \mathcal{H}_2$ and let $g_H$ be
the polynomial determined by \eqref{eq:def_g} using the forms~$D_H$, $E_H$, and~$F_H$ given by \eqref{eq:DEF_reduced}. 
Then, we have
\begin{equation}
\label{eq:ngH}
    n\cdot g_H =
    \sum_{(i, j, k) \in H}
    \bigl(
        (c-b)\phi_D(i, j, k) + (a-c) \phi_E(i,j,k) + (b-a) \phi_F(i,j,k)
    \bigr) x^iy^jz^k
\end{equation}
\end{lemma}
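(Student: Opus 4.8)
The plan is to compute the determinant in~\eqref{eq:def_g} explicitly and then substitute the forms supplied by~\Cref{prop:explicit_formulas}. First I would expand $n\,g_H = \det\left(\begin{smallmatrix} x & ax & D_H\\ y & by & E_H\\ z & cz & F_H\end{smallmatrix}\right)$ by cofactors along the third column. The three $2\times 2$ minors of the first two columns are $\det\left(\begin{smallmatrix} y & by\\ z & cz\end{smallmatrix}\right)=(c-b)\,yz$, $\det\left(\begin{smallmatrix} x & ax\\ z & cz\end{smallmatrix}\right)=(c-a)\,xz$, and $\det\left(\begin{smallmatrix} x & ax\\ y & by\end{smallmatrix}\right)=(b-a)\,xy$, so with the chessboard signs one obtains, for \emph{arbitrary} degree $n-2$ forms $D,E,F$, the identity
\[
    n\,g = (c-b)\,yz\,D + (a-c)\,xz\,E + (b-a)\,xy\,F .
\]
(This is just the cofactor expansion along the column opposite to the one giving $n\,g = x\,G_1 + y\,G_2 + z\,G_3$, read off from the other side.)

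Then I would substitute $D=D_H$, $E=E_H$, $F=F_H$ from~\eqref{eq:DEF_reduced} and multiply through term by term. For each $(i,j,k)\in H$ one has $yz\cdot\phi_D(i,j,k)\,x^iy^{j-1}z^{k-1}=\phi_D(i,j,k)\,x^iy^jz^k$; this is a bona fide polynomial identity because, by~\eqref{def:phi_D}, $\phi_D(i,j,k)$ vanishes unless $i=0$ and $j,k>0$, in which case $x^iy^{j-1}z^{k-1}$ is an honest monomial. In exactly the same way, using~\eqref{def:phi_E} and~\eqref{def:phi_F}, one gets $xz\cdot\phi_E(i,j,k)\,x^{i-1}y^jz^{k-1}=\phi_E(i,j,k)\,x^iy^jz^k$ (valid since $\phi_E$ is supported on $i,k>0$) and $xy\cdot\phi_F(i,j,k)\,x^{i-1}y^{j-1}z^k=\phi_F(i,j,k)\,x^iy^jz^k$ (valid since $\phi_F$ is supported on $i,j>0$). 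Summing over $(i,j,k)\in H$ and collecting the three contributions attached to the common monomial $x^iy^jz^k$ yields exactly~\eqref{eq:ngH}.

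There is no genuine obstacle here: the statement is essentially a bookkeeping reorganization of the $3\times 3$ determinant. The only two points requiring attention are getting the cofactor signs right in the first step — which is what fixes the coefficient pattern $(c-b)$, $(a-c)$, $(b-a)$ — and checking that multiplication by $yz$, $xz$, and $xy$ never produces negative exponents on the respective supports of $\phi_D$, $\phi_E$, $\phi_F$, which is immediate from~\eqref{def:phi_D}--\eqref{def:phi_F}. (Alternatively, one may let the three index sets in~\eqref{eq:DEF_reduced} range over all of $\calT'$, since $\phi_D$, $\phi_E$, $\phi_F$ vanish at the excluded points, and then the termwise multiplication is manifestly legitimate.)
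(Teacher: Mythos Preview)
Your proof is correct and follows essentially the same route as the paper: expand the determinant to obtain $n\,g = (c-b)yzD + (a-c)xzE + (b-a)xyF$, then substitute the formulas~\eqref{eq:DEF_reduced} and collect the contributions at each monomial $x^iy^jz^k$. If anything, you are slightly more careful than the paper in explicitly noting that the support conditions on $\phi_D,\phi_E,\phi_F$ guarantee no negative exponents arise.
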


\begin{proof}
By \eqref{eq:def_g}, we have
\[
    g = \frac{1}{n} \bigl( (c-b)yz D + (a-c)xz E + (b-a)xy F \bigr).
\]
By substituting
$D_H, E_H, F_H$ from \Cref{eq:DEF_reduced} in place of $D, E, F$,
we get:
\begin{align*}
    n\cdot g_H &= (c-b)\sum_{(i, j, k)\in H}\phi_D(i,j,k)x^iy^jz^k
    +(a-c)\sum_{(i, j, k)\in H}\phi_E(i,j,k)x^iy^jz^k+ \\
    &\phantom{=\ } (b-a)\sum_{(i,j,k) \in H}\phi_F(i, j, k)x^iy^jz^k \\
    & = \sum_{(i, j, k) \in H}\bigl((c-b)\phi_D(i, j, k)+
    (a-c) \phi_E(i,j,k) + (b-a) \phi_F(i,j,k)\bigr) x^iy^jz^k
\end{align*}
which proves the claim.
\end{proof}

\begin{es}
In the case $n=9$, an example of a maximal segment is
\[
    h = 
    \bigl\{
        (0, 2, 7), (1, 3, 5), (2, 4, 3), (3, 5, 1)
    \bigr\}.
\]
From \eqref{eq:ngH}, we have that
\begin{multline*}
    g_h = \frac {1}{9}y^2z\left(
    \bigl((a-c)e_{250}+(b-a)f_{241}\bigr)x^3y^3+
    \bigl((a-c)e_{142}+(b-a)f_{133}\bigr)x^2y^2z^2+\right.\\
    \left. \bigl((a-c)e_{034}+(b-a)f_{025}\bigr)xyz^4+
    (c-b)d_{016}z^6\right).
\end{multline*}
Note that here $g$ is not reduced.
\end{es}

\begin{rmk}
The polynomial $g_H$ is a homogeneous polynomial in $x, y, z$ of degree $n$ and from the above expression,
we see that the only monomials which appear in $g_H$
are those of the form~$x^iy^jz^k$ where $(i, j, k) \in H$;
therefore, the set $H$ in the triangular grid~$\calT'$ that expresses a solution of~\eqref{eqA1_3}
can be used here with a different meaning: it represents, in the triangular grid~$\calT_n$ of degree $n$ monomials, the monomials appearing in $g_H$.
\end{rmk}

\begin{definition}
We set $g_H^*$ to be the polynomial
obtained by substituting $D_H, E_H^*, F_H$ in \eqref{eq:hilbert_burch_matrix}, and $g_H^{**}$ to be the final
value when also
$a, b, c$ are evaluated according to the conditions
given by the ideal generated by $r(H)$.
\end{definition}

Let us analyze explicitly the two cases.

\subsection{Case \texorpdfstring{$H\in \mathcal{H}_1$}{H in H1}}
\label{caseH1}

Here $H = \{(i, j, k)\}$.
From \eqref{eq:DEF_H_point}, we see that
\[
    g_H = \frac{1}{n}\left((a-c)e_{i-1\, j\, k-1}+(b-a)f_{i-1\,j-1\, k} \right)x^iy^jz^k
\]
while
\[
    g_H^* =
    \left(
        \frac{(b-a)}{n} - \frac{k(a-c)}{j}
    \right) f_{i-1 \, j-1 \, k} x^iy^jz^k
\]
and
\[
    g_H^{**} =\left(\frac{(b-a)}{n}-\frac{(k-i)a}{j}-b\right)f_{i-1\,j-1\, k} x^iy^jz^k.
\]

\subsection{Case \texorpdfstring{$H\in \mathcal{H}_2$}{H in H2}}
\label{caseH2}

Taking into account \eqref{eq:DEF_reduced} and \eqref{def:phi_Estar}, if we denote by $g_H^{\ast}$ the polynomial determined by~\eqref{eq:def_g}, we get
\begin{multline*}
    n \cdot g_H^{\ast} = \sum_{(0,j,k) \in H} (c-b) d_{0 \, j-1 \, k-1} y^j z^k
    + \sum_{(i,0,k) \in H} (a-c) e_{i-1 \, 0 \, k-1} x^i z^k\\
    + \sum_{(i,j,0) \in H} (b-a) f_{i-1 \, j-1 \, 0} x^i y^j
    + \sum_{\substack{(i,j,k) \in H \\ i, j, k>0}} \left((b-a) -\frac{k}{j}(a-c) \right) f_{i-1 \, j-1 \, k} x^i y^j z^k.
\end{multline*}
Notice that the first three sums contribute by at most one monomial in $x,y,z$ each.

The last formula shows that there are no relations among the
coefficients of $g_H^{\star}$, so they can get any possible value.
The greatest common divisor of the monomials
$x^iy^jz^k$ for $(i, j, k)\in H$, is given by
$x^uy^vz^w$ where $u$ is the minimum of the integers $i$ such that
$(i,j,k)\in H$, $v$ is the minimum of the integers $j$ such that
$(i,j,k)\in H$ and $w$ is the minimum of the integers $k$ such that
$(i,j,k)\in H$. The restrictions we have on~$H$ imply that this greatest
common divisor is never $1$, hence
$g_H$ (and $g_H^*$ and $g_H^{**}$) is always a multiple of $x$ or of $y$ or of $z$ (which is trivially true in \Cref{caseH1}).

Moreover, we observe that the explicit expressions for $g_H^{**}$ are quite involved, but since we are interested in squarefree polynomials, we shall determine them in the next Section.

\section{Selecting squarefree solutions}
\label{squarefree}

Since we are interested only in square-free polynomials $g$, we have other restrictions to consider.
If $H = \{(i, j, k)\}$, from~\eqref{eq:DEF_H_point} we see that $g_H$ is never
squarefree (unless $n=3$), so the case in which $H$ is a point
(therefore, the whole set $\mathcal{H}_1$) can be set aside.

Hence, let us focus on the case where $H$ is not a singleton.
From now on, we assume $n \geq 4$.

\begin{definition}
In $\calT'$, we select these subsets:
\begin{align*}
    \Gamma_1 &= \{(n-1, 1, 0), (n-1, 0, 1), (n-2, 1, 1)\},\\
    \Gamma_2 &= \{(0, n-1, 1), (1, n-1, 0), (1, n-2, 1)\},\\
    \Gamma_3 &= \{(0, 1, n-1), (1, 0, n-1), (1, 1, n-2)\},\\
    \Delta_1 &= \{(i, j, k) \in \calT' \mid i = 0, 1 \},\\
    \Delta_2 &= \{(i, j, k) \in \calT' \mid j = 0, 1 \},\\
    \Delta_3 &= \{(i, j, k) \in \calT' \mid k = 0, 1 \},
\end{align*}
and we consider the set
\[
    \mathcal{H}_{\mathrm{red}} =
    \bigl\{
        H \in \mathcal{H}_2 \mid
        \text{ there exists } \ell \in \{1, 2, 3\}
        \text{ such that }
        H \cap \Gamma_{\ell} \neq \emptyset
        \text{ and }
        H \cap \Delta_{\ell} \neq \emptyset
    \bigr\}.
\]
\end{definition}

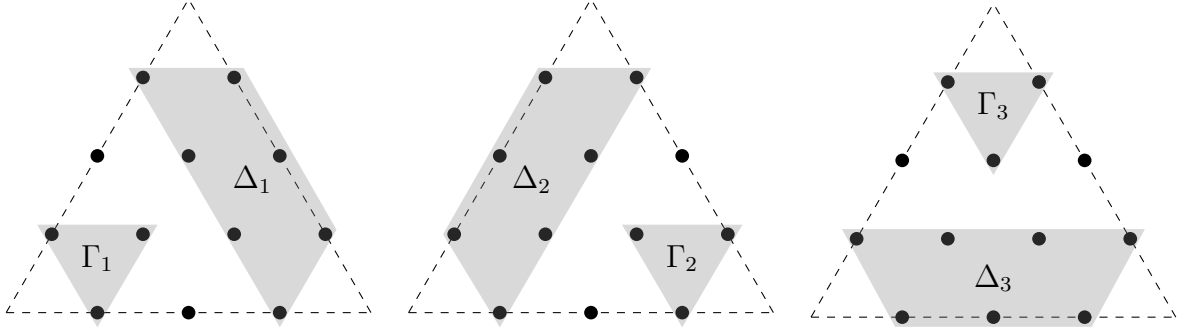
\begin{figure}
    \begin{center}
        \begin{tikzpicture}[scale = 0.6]
            \coordinate (A) at (0,0);
            \coordinate (A1) at ($(A)+(2,0)$);
            \coordinate (A2) at ($(A)+(4,0)$);
            \coordinate (A3) at ($(A)+(6,0)$);
            \coordinate (B1) at ($(A)+(1,1.732)$);
            \coordinate (B2) at ($(A)+(3,1.732)$);
            \coordinate (B3) at ($(A)+(5,1.732)$);
            \coordinate (B4) at ($(A)+(7,1.732)$);
            \coordinate (C1) at ($(A)+(2,2*1.732)$);
            \coordinate (C2) at ($(A)+(4,2*1.732)$);
            \coordinate (C3) at ($(A)+(6,2*1.732)$);
            \coordinate (D1) at ($(A)+(3,3*1.732)$);
            \coordinate (D2) at ($(A)+(5,3*1.732)$);

            \fill (A1) circle (0.15);
            \fill (A2) circle (0.15);
            \fill (A3) circle (0.15);
            \fill (B1) circle (0.15);
            \fill (B2) circle (0.15);
            \fill (B3) circle (0.15);
            \fill (B4) circle (0.15);
            \fill (C1) circle (0.15);
            \fill (C2) circle (0.15);
            \fill (C3) circle (0.15);
            \fill (D1) circle (0.15);
            \fill (D2) circle (0.15);
            \draw[dashed] (A)--($(A)+(8,0)$);
            \draw[dashed] ($(A)+(8,0)$)--($(A)+(4, 4*1.732)$);
            \draw[dashed] (A)--($(A)+(4, 4*1.732)$);
            
            \draw[fill=gray, draw=gray, opacity = 0.3]  ($(B1)+(-0.3,0.2)$) -- ($(B2)+(0.3, 0.2)$) -- ($(A1)+(0, -0.3)$) -- cycle;
            
            \draw[fill=gray, draw=gray, opacity = 0.3]  ($(A3)+(0,-0.3)$) -- ($(B4)+(0.23, 0.1)$) -- ($(D2)+(0.2, 0.2)$) -- ($(D1)+(-0.3,0.2)$) -- cycle;

            \node at (2, 0.7*1.732) {{\large $\Gamma_1$}};
            \node at (5.4, 1.7*1.732) {{\large $\Delta_1$}};
        \end{tikzpicture}
        \quad
        \begin{tikzpicture}[scale = 0.6]
            \coordinate (A) at (0,0);
            \coordinate (A1) at ($(A)+(2,0)$);
            \coordinate (A2) at ($(A)+(4,0)$);
            \coordinate (A3) at ($(A)+(6,0)$);
            \coordinate (B1) at ($(A)+(1,1.732)$);
            \coordinate (B2) at ($(A)+(3,1.732)$);
            \coordinate (B3) at ($(A)+(5,1.732)$);
            \coordinate (B4) at ($(A)+(7,1.732)$);
            \coordinate (C1) at ($(A)+(2,2*1.732)$);
            \coordinate (C2) at ($(A)+(4,2*1.732)$);
            \coordinate (C3) at ($(A)+(6,2*1.732)$);
            \coordinate (D1) at ($(A)+(3,3*1.732)$);
            \coordinate (D2) at ($(A)+(5,3*1.732)$);

            \fill (A1) circle (0.15);
            \fill (A2) circle (0.15);
            \fill (A3) circle (0.15);
            \fill (B1) circle (0.15);
            \fill (B2) circle (0.15);
            \fill (B3) circle (0.15);
            \fill (B4) circle (0.15);
            \fill (C1) circle (0.15);
            \fill (C2) circle (0.15);
            \fill (C3) circle (0.15);
            \fill (D1) circle (0.15);
            \fill (D2) circle (0.15);
            \draw[dashed] (A)--($(A)+(8,0)$);
            \draw[dashed] ($(A)+(8,0)$)--($(A)+(4, 4*1.732)$);
            \draw[dashed] (A)--($(A)+(4, 4*1.732)$);
            
            \draw[fill=gray, draw=gray, opacity = 0.3]  ($(B3)+(-0.3,0.2)$) -- ($(B4)+(0.3, 0.2)$) -- ($(A3)+(0, -0.3)$) -- cycle;
            
            \draw[fill=gray, draw=gray, opacity = 0.3]  ($(A1)+(0,-0.3)$) -- ($(B1)+(-0.23, 0)$) -- ($(D1)+(-0.15, 0.2)$) -- ($(D2)+(0.3,0.2)$) -- cycle;

            \node at (6, 0.7*1.732) {{\large $\Gamma_2$}};
            \node at (2.7, 1.7*1.732) {{\large $\Delta_2$}};
        \end{tikzpicture}
        \quad
        \begin{tikzpicture}[scale = 0.6]
            \coordinate (A) at (0,0);
            \coordinate (A1) at ($(A)+(2,0)$);
            \coordinate (A2) at ($(A)+(4,0)$);
            \coordinate (A3) at ($(A)+(6,0)$);
            \coordinate (B1) at ($(A)+(1,1.732)$);
            \coordinate (B2) at ($(A)+(3,1.732)$);
            \coordinate (B3) at ($(A)+(5,1.732)$);
            \coordinate (B4) at ($(A)+(7,1.732)$);
            \coordinate (C1) at ($(A)+(2,2*1.732)$);
            \coordinate (C2) at ($(A)+(4,2*1.732)$);
            \coordinate (C3) at ($(A)+(6,2*1.732)$);
            \coordinate (D1) at ($(A)+(3,3*1.732)$);
            \coordinate (D2) at ($(A)+(5,3*1.732)$);

            \fill (A1) circle (0.15);
            \fill (A2) circle (0.15);
            \fill (A3) circle (0.15);
            \fill (B1) circle (0.15);
            \fill (B2) circle (0.15);
            \fill (B3) circle (0.15);
            \fill (B4) circle (0.15);
            \fill (C1) circle (0.15);
            \fill (C2) circle (0.15);
            \fill (C3) circle (0.15);
            \fill (D1) circle (0.15);
            \fill (D2) circle (0.15);
            \draw[dashed] (A)--($(A)+(8,0)$);
            \draw[dashed] ($(A)+(8,0)$)--($(A)+(4, 4*1.732)$);
            \draw[dashed] (A)--($(A)+(4, 4*1.732)$);
            
            \draw[fill=gray, draw=gray, opacity = 0.3]  ($(D1)+(-0.3,0.2)$) -- ($(D2)+(0.3, 0.2)$) -- ($(C2)+(0, -0.3)$) -- cycle;
            
            \draw[fill=gray, draw=gray, opacity = 0.3]  ($(B1)+(-0.3,0.2)$) -- ($(B4)+(0.3, 0.2)$) -- ($(A3)+(0.15, -0.2)$) -- ($(A1)+(-0.15,-0.2)$) -- cycle;
               
            \node at (4, 2.7*1.732) {{\large $\Gamma_3$}};
            \node at (4, 0.5*1.732) {{\large $\Delta_3$}};
        \end{tikzpicture}
  
    \end{center}
    \caption{The sets $\Gamma_i$ and $\Delta_i$ for $i \in \{1,2,3\}$.}
\end{figure}

\begin{definition}
We denote the three maximal segments parallel to the three edges of the triangle~$\calT_n$ by:
\begin{align*}
    L_1 &:= \{(1, n-1, 0), \dotsc, (1, 0, n-1)\}, \\
    L_2 &:= \{(n-1, 1, 0), \dotsc, (0, 1, n-1)\}, \\
    L_3 &:= \{(n-1, 0, 1), \dotsc, (0, n-1, 1)\}.
\end{align*}
\end{definition}

\begin{definition}
If $P$ and $Q$ are two points in $\calT'$, we denote by $H(P, Q)$ the maximal segment of~$\calT'$ containing~$P$ and~$Q$.
\end{definition}

\begin{lemma}
\label{lemma:extremes}
Let $P \in \Gamma_\ell$ and $Q \in \Delta_\ell$ for some $\ell \in \{1,2,3\}$.
If $H(P,Q) \neq L_i$ for all $i \in \{1,2,3\}$, then $P$ and $Q$ are the extremes of~$H(P,Q)$.
\end{lemma}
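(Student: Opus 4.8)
The plan is to reduce to $\ell = 1$ using the cyclic symmetry of the statement: the substitution $(x,y,z) \mapsto (y,z,x)$ carries $(\Gamma_1,\Delta_1)$ to $(\Gamma_2,\Delta_2)$ to $(\Gamma_3,\Delta_3)$ and permutes $\{L_1,L_2,L_3\}$, so it suffices to treat $P = (P_i,P_j,P_k) \in \Gamma_1$ and $Q = (Q_i,Q_j,Q_k) \in \Delta_1$. By definition $P_i \in \{n-1,n-2\}$ and $Q_i \in \{0,1\}$, and since $n \ge 4$ we get $P_i \ge 2 > Q_i$; hence $P \ne Q$, and the first coordinate is a strictly monotone affine function along the line $H(P,Q)$, decreasing from $P$ towards $Q$. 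As $H(P,Q)$ consists of all points of $\calT'$ on that line, $P$ and $Q$ are its extremes exactly when no point of $\calT'$ lies strictly beyond $P$ (away from $Q$) or strictly beyond $Q$. I would establish these two facts separately, in each case concluding that $H(P,Q)$ equals one of $L_1,L_2,L_3$ — contrary to hypothesis.

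For the $P$-end: if some $P' \in \calT'$ lay beyond $P$, then $P'_i > P_i \ge n-2$, and $P'_i = n$ is impossible since $P' \ne (n,0,0)$, so $P'_i = n-1$ and $P' \in \{(n-1,1,0),(n-1,0,1)\}$; moreover $P_i$ is strictly between $Q_i$ and $P'_i = n-1$, which forces $P_i = n-2$, i.e. $P = (n-2,1,1)$. But the line through $(n-2,1,1)$ and $(n-1,1,0)$ is $\{j=1\} = L_2$, and the line through $(n-2,1,1)$ and $(n-1,0,1)$ is $\{k=1\} = L_3$, so $H(P,Q) \in \{L_2,L_3\}$ — contradiction.

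For the $Q$-end: suppose some $Q' \in \calT'$ lies beyond $Q$. Monotonicity gives $Q'_i < Q_i \le 1$, so $Q_i = 1$, $Q'_i = 0$, and then $Q'_j, Q'_k \ge 1$ since $Q' \in \calT'$. Letting $w$ be the primitive lattice direction from $P$ towards $Q$, one has $Q = P + s\,w$ and $Q' = P + s'\,w$ with $0 < s < s'$ integers, and $-w_i$ divides both $P_i - 1$ and $P_i$, hence $w_i = -1$ and $s' = s+1$; thus $v := Q - Q' = (1,\mu,-1-\mu)$ for some $\mu \in \Z$, and $P = Q + t\,v$ with $t = P_i - 1 \in \{n-3,n-2\}$. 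From $P = P_i\,Q - (P_i-1)\,Q'$ together with $Q'_j, Q'_k \ge 1$ one deduces $Q_j, Q_k \ge 1$, hence $1 \le Q_j, Q_k \le n-2$. Plugging each of the three points of $\Gamma_1$ into $P_j = Q_j + t\mu$ and $P_k = Q_k - t(1+\mu)$ and imposing $1 \le Q_j, Q_k \le n-2$ then forces $\mu \in \{0,-1\}$ in every case — the remaining values being excluded for $n \ge 4$, with $\gcd(n-1,n-2) = 1$ killing the would-be non-integral solutions. Finally $\mu = 0$ puts $P,Q,Q'$ on $\{j=1\} = L_2$ and $\mu = -1$ puts them on $\{k=1\} = L_3$; either way $H(P,Q) \in \{L_2,L_3\}$ — contradiction.

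The delicate step is the $Q$-end. Past $P$ one runs off the triangle within a single lattice step, so there is essentially nothing to check; but past $Q$ a collinear point of $\calT'$ could a priori exist, and excluding it needs the exact coordinates of the three points of $\Gamma_1$ together with the divisibility constraint coming from $t = P_i - 1$ and $\gcd(n-1,n-2) = 1$. Arranging the finite case analysis so that one argument works uniformly for all $n \ge 4$ is where the real work lies.
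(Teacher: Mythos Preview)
The paper states this lemma without proof, so there is no approach to compare against; your argument stands on its own and is correct.

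A couple of small comments on presentation. First, the appeal to $\gcd(n-1,n-2)=1$ is not really where the work happens: once $w$ is taken primitive, $\mu=-w_j$ is already an integer, and the fact that $-w_i$ divides both $P_i$ and $P_i-1$ forces $w_i=-1$ directly. After that, the constraints $1\le Q_j,Q_k\le n-2$ pin down $\mu\in\{0,-1\}$ by pure inequalities, with no further divisibility needed. You might rephrase that sentence so the reader does not look for an arithmetic step that is not there. Second, it is worth remarking that there can be at most one lattice point of $\calT'$ strictly beyond $Q$: since the $i$-coordinate drops by exactly~$1$ at each step (because $w_i=-1$) and $Q_i\le 1$, any further step would give negative $i$-coordinate. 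This makes explicit why picking a single $Q'$ suffices. With these cosmetic tweaks, the proof is clean and the case analysis at the $Q$-end---which, as you note, is where the genuine content lies---goes through uniformly for all $n\ge 4$.
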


Let us describe the points of $H(P, Q)$ when
$P$ and $Q$ are its extremes.
In this case, the elements of~$H(P, Q)$ are the points $R_\lambda$ given by
\[
    R_\lambda = P + \frac{\lambda}{d}(Q-P), 
    \quad 
    \lambda = 0, \dotsc, d
\]
where $d$ is the greatest common divisor of the
entries of the vector $Q-P$. From this and \Cref{lemma:extremes}, we can easily describe the elements of $\mathcal{H}_{\mathrm{red}}$.

\begin{lemma}
\label{lemma:reduced_segments}
If $H\in \mathcal{H}_2$ is such that $g_H$ is squarefree,
then $H \in \mathcal{H}_{\mathrm{red}}$.
\end{lemma}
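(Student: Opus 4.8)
The plan is to isolate the single consequence of squarefreeness that we actually need, and then argue purely combinatorially on the triangular grid. By \eqref{eq:ngH}, every monomial occurring in $g_H$ is of the form $x^iy^jz^k$ with $(i,j,k)\in H$; hence $g_H$ is divisible by $x^uy^vz^w$, where $u$, $v$, $w$ denote the minima over $H$ of the coordinates $i$, $j$, $k$, respectively. Consequently, if $g_H$ is squarefree then $u\le 1$, $v\le 1$, and $w\le 1$. Thus it suffices to establish the following grid-geometric statement: \emph{if $H\in\mathcal{H}_2$ satisfies $u,v,w\le 1$, then $H\in\mathcal{H}_{\mathrm{red}}$.}

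To prove this I would use that each of the three coordinate functions restricts to an affine function on the line through $H$; in particular each of them attains its minimum over the segment $H$ at one of the two extreme points of $H$, and any coordinate that is not constant along $H$ is strictly monotone on it. Moreover, since $i+j+k=n$ is constant, when $H$ is traversed in a fixed direction the three coordinate functions cannot all be increasing, nor all decreasing. This produces two cases.

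In the first case one coordinate — say the $m$-th one — is constant along $H$. Then this constant value is the corresponding minimum, hence is $\le 1$. If it were $0$, the segment $H$ would lie on the edge of $\calT_n$ where the $m$-th coordinate vanishes, forcing $H=H_m$, which is excluded since $H\in\mathcal{H}_2$. Therefore the $m$-th coordinate equals $1$ identically on $H$, which means $H=L_m$; and a direct inspection of the defining lists shows $L_1,L_2,L_3\in\mathcal{H}_{\mathrm{red}}$ (for instance $(1,n-1,0)\in L_1\cap\Gamma_2$ and $(1,0,n-1)\in L_1\cap\Delta_2$, the other two segments being handled in the same way).

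In the second case all three coordinates are strictly monotone along $H$; by the observation above, exactly one coordinate — say the $m$-th — varies in the opposite sense to the other two. Let $P$ be the extreme of $H$ at which the $m$-th coordinate is largest. Then at $P$ both of the remaining coordinates attain their minimum over $H$, so both are $\le 1$; since $P$ belongs to $\calT'$ and is therefore not a vertex of $\calT_n$, this forces (using $n\ge 4$) that $P$ is one of the three points of $\calT'$ whose coordinates other than the $m$-th are all $\le 1$, i.e.\ $P\in\Gamma_m$. If $Q$ is the other extreme of $H$, then its $m$-th coordinate equals the minimum of the $m$-th coordinate over $H$, hence is $\le 1$, so $Q\in\Delta_m$. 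Since $P$ and $Q$ lie on $H$, we conclude $H\cap\Gamma_m\neq\emptyset$ and $H\cap\Delta_m\neq\emptyset$, that is, $H\in\mathcal{H}_{\mathrm{red}}$. The only delicate point is the bookkeeping in this last case — deciding which extreme carries two small coordinates and which carries one, and matching the bounds $u,v,w\le 1$ against the explicit descriptions of $\Gamma_m$ and $\Delta_m$; beyond this there is no real obstacle, since squarefreeness enters only through the divisibility observation of the first paragraph.
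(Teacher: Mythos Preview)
Your proposal is correct and follows essentially the same approach as the paper: both arguments rest on the observation that $x^uy^vz^w$ (with $u,v,w$ the coordinate minima over $H$) divides $g_H$, so squarefreeness forces $u,v,w\le 1$, and this combinatorial condition pins $H$ down to~$\mathcal{H}_{\mathrm{red}}$. The paper states this contrapositively in a single sentence and leaves the grid-geometric verification implicit; you have simply written out that verification in full (the constant-coordinate case leading to $L_m$, and the monotone case placing the extremes in $\Gamma_m$ and $\Delta_m$).
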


\begin{proof}
If $H \not \in \mathcal{H}_{\mathrm{red}}$, then the greatest common divisor of the
monomials of $g_H$ contains $x$, or~$y$, or~$z$ to a power greater than $1$,
so $g_H$ cannot be squarefree.
\end{proof}

Hence, to describe the polynomials~$g$ that are squarefree, it is enough to consider maximal segments in~$\mathcal{H}_{\mathrm{red}}$, which determine the triple $(a: b: c)$.
For instance, if a segment~$H$ has an extreme in $\Gamma_2$ and an extreme in~$\Delta_2$, one has six possible cases, see \Cref{table:cases}.
The other cases involved in $\mathcal{H}_{\mathrm{red}}$ can be treated similarly, and the resulting polynomials are essentially obtained by permuting the variables $x$, $y$ and $z$.

\begin{table}[ht]
\caption{All six possible cases when we take a segment~$H$ with a vertex in~$\Gamma_2$ and a vertex in~$\Delta_2$.
If $P_2 \in \Gamma_2$ and $Q_2\in \Delta_2$, let $d$ be the greatest common divisor of the entries of the vector $Q_2-P_2$.
We define the vector $(\alpha, \beta, \gamma)$ to be
\[
    (\alpha, -\beta, \gamma) := \frac{1}{d}(Q_2-P_2).
\]
The polynomials~$T$ appearing in the table are bivariate homogeneous polynomials of degree $d$; hence, each $T(y^{\beta}, x^{\alpha}z^{\gamma})$ admits a factorization of the form
\[
    T(y^{\beta}, x^{\alpha}z^{\gamma}) = \prod_{s=1}^d (\lambda_s y^\beta +\mu_s x^{\alpha}z^{\gamma}) \,.
\]
The cases (2), $i=n-2$; (5), $i=n-1$; (6), $i=n-2$
are omitted, since sub-cases of (1), $i=n-1$. Similarly, the cases (4), (5), (6), $i=1$ are also
omitted, since sub-cases of (3), $i=1$.
}
\begin{tabular}{cccrl}
    \toprule
    & $(P_2, Q_2)$ & $(a:b:c)$ & & $g_H$ \\
    \hline\\
    (1) &
    \begin{tabular}{c}
        \ $\left(\begin{smallmatrix}
        0 & n-1& 1 \\
        i & 0 & n-i
        \end{smallmatrix}\right)$,\\
        {\tiny $1\le i\le n-1$}
    \end{tabular}
    & $\left(\begin{smallmatrix} (n-i)(n-1) \\ i \\ -i(n-1) \end{smallmatrix}\right)$ &
    \begin{tabular}{r}
        {\small $i = 1$:}\\
        {\small $i = n-1$:}\\
        {\small $1<i<n-1$:}
    \end{tabular} &
    \begin{tabular}{l}
        $z \, (\lambda \, y^{n-1} + \mu \, x z^{n-2})$\\
        $z \, T(x,y)$ \\
        $z \, T(y^{\beta}, x^{\alpha}z^{\gamma})$
    \end{tabular}\\[24pt]
    (2) &
    \begin{tabular}{c}
        $\left(\begin{smallmatrix}
        0 & n-1& 1 \\
        i & 1 & n-i-1
        \end{smallmatrix}\right)$,\\
        {\tiny $1\le i\le n-1$}
    \end{tabular}
    & $\left(\begin{smallmatrix}
    (n-i-1)(n-1)-1 \\ -i \\ -i(n-1)
    \end{smallmatrix}\right)$&
    \begin{tabular}{r}
        {\small $i = 1$:}\\
        {\small $i = n-1$:}\\
        {\small $1<i<n-2$:}
    \end{tabular} &
    \begin{tabular}{l}
        $yz \, (\lambda \, y^{n-2} + \mu \, x\, z^{n-3})$\\
        $y \, (\lambda \, y^{n-2}z + \mu \, x^{n-1})$ \\
        $yz  \, T(y^{\beta}, x^{\alpha}z^{\gamma})$
    \end{tabular}\\[24pt]
    (3) &
    \begin{tabular}{c}
        $\left(\begin{smallmatrix}
        1 & n-1& 0 \\
        i & 0 & n-i
        \end{smallmatrix}\right)$,\\ 
        {\tiny $1\le i\le n-1$}
    \end{tabular}
    & $\left(\begin{smallmatrix}
    (n-i)(n-1) \\ -(n-i) \\ -i(n-1)
    \end{smallmatrix}\right)$&
    \begin{tabular}{r}
        {\small $i = 1$:}\\
        {\small $i = n-1$:}\\
        {\small $1<i<n-1$:}
    \end{tabular} &
    \begin{tabular}{l}
    $x\, T(y,z)$\\
    $x\, T(\lambda y^{n-1}+\mu x^{n-2}z)$\\
    $x \, T(y^{\beta}, x^{\alpha}z^{\gamma})$
    \end{tabular}\\[24pt]
    (4) &
    \begin{tabular}{c}
        $\left(\begin{smallmatrix}
        1 & n-1& 0 \\
        i & 1 & n-i-1
        \end{smallmatrix}\right)$,\\
        {\tiny $0\le i\le n-2$}
    \end{tabular}
    & $\left(\begin{smallmatrix}
    (n-i-1)(n-1) \\ -(n-i-1) \\ 1-i(n-1)
    \end{smallmatrix}\right)$&
    \begin{tabular}{r}
        {\small $i = 0$:}\\
        {\small $i = n-2$:}\\
        {\small $1<i<n-2$:}
    \end{tabular} &
    \begin{tabular}{l}
        $y \, (\lambda xy^{n-2}+ \mu z^{n-1})$\\
        $xy \, (\lambda y^{n-2}+\mu x^{n-3}z)$\\
        $xy\, T(y^{\beta}, x^{\alpha}z^{\gamma})$
    \end{tabular}\\[24pt]
    (5) &
    \begin{tabular}{c}
        $\left(\begin{smallmatrix}
        1 & n-2& 1 \\
        i & 0 & n-i
        \end{smallmatrix}\right)$,\\
        {\tiny $1\le i\le n-1$}
    \end{tabular}
    & $\left(\begin{smallmatrix}
    (n-i)(n-2) \\ 2i-n \\ -i(n-2)
    \end{smallmatrix}\right)$&
    \begin{tabular}{r}
        {\small $1<i<n-1$:}
    \end{tabular} &
    \begin{tabular}{l}
        $xz \, T(y^{\beta}, x^{\alpha}z^{\gamma})$
    \end{tabular}
    \\[24pt]
    (6) &
    \begin{tabular}{c}
        $\left(\begin{smallmatrix}
        1 & n-2& 1 \\
        i & 1 & n-i-1
        \end{smallmatrix}\right)$,\\
        {\tiny $0\le i\le n-1$}
    \end{tabular}
    & $\left(\begin{smallmatrix}
    (n-i-1)(n-2)-1 \\ 2i+1-n \\ 1-i(n-2)
    \end{smallmatrix}\right)$&
    \begin{tabular}{r}
        {\small $i = 0$:}\\
        {\small $i = n-1$:}\\
        {\small $1<i<n-2$:}
    \end{tabular} &
    \begin{tabular}{l}
        $yz\, (\lambda xy^{n-3}+\mu z^{n-2})$\\
        $xy\, (\lambda y^{n-3}z+\mu x^{n-2})$\\
        $xyz \, T(y^{\beta}, x^{\alpha}z^{\gamma})$
    \end{tabular}
    \\
    \bottomrule
\end{tabular}
\label{table:cases}
\end{table}

\newpage
The following theorem, which provides a classification of reduced free curves with a linear Jacobian syzygy of type \eqref{eq:3_blocks}, is an immediate consequence of the inspection of the zero loci of polynomials appearing in \Cref{table:cases}.

\begin{theorem}
\label{theorem:characterization_free_curves}
If a polynomial $g$ of degree $n\ge 4$ corresponds to a reduced free curve, not a set of concurrent lines, with a linear Jacobian syzygy of type \eqref{eq:3_blocks}, then the zero set of $g$ falls in one of the following four families:
\begin{itemize}
    \item union of a line with a set of concurrent lines;
    \item union of bitangent conics with at least one of their common tangent line;
    \item union of unicuspidal curves, belonging to a pencil, all with the same cusp and the same cuspidal tangent, with the common tangent line, and possibly a general line through the cusp;
    \item union of bicuspidal curves belonging to a pencil, all with the same two cusps, with at least one cuspidal tangent, and possibly the other tangent and/or the line passing through the two cusps.
\end{itemize}
Note that, besides lines, all the irreducible components of the zero set of~$g$ have the same degree.
\end{theorem}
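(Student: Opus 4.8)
The plan is to reduce the theorem to a finite inspection of \Cref{table:cases}. First, by \Cref{lemma:reduced_segments} the curve is cut out, up to a nonzero scalar, by a polynomial $g_H$ with $H\in\mathcal H_{\mathrm{red}}$; by the definition of $\mathcal H_{\mathrm{red}}$ together with \Cref{lemma:extremes}, $H$ is the maximal segment with endpoints $P\in\Gamma_\ell$ and $Q\in\Delta_\ell$ for some $\ell\in\{1,2,3\}$, and applying a permutation of $x,y,z$ we may assume $\ell=2$, so that $g$ is one of the polynomials displayed in \Cref{table:cases}. Writing $(\alpha,-\beta,\gamma)=\tfrac1d(Q-P)$ in lowest terms, two structural features of these normal forms will be used repeatedly: since $P,Q\in\calT'$ have coordinate sum $n$ we get $\beta=\alpha+\gamma$, hence $g_H$ is a monomial $m\in\{1,x,y,z,xy,xz,yz,xyz\}$ times a genuinely homogeneous form that factors as $\prod_{s=1}^{d}(\lambda_s M_1+\mu_s M_2)$, where $M_1,M_2$ are the two coprime monomials of the same degree read off the endpoints of $H$ (in the generic situation $M_1=y^{\beta}$ and $M_2=x^{\alpha}z^{\gamma}$, the few boundary configurations being handled analogously); and the $d$ pairs $(\lambda_s:\mu_s)$ are pairwise distinct because $g_H$ is squarefree.

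The core of the inspection is the analysis of a single binomial component $C_s=V(\lambda_s M_1+\mu_s M_2)$. If one of $\lambda_s,\mu_s$ vanishes then $C_s$ is a union of coordinate lines, and squarefreeness of $g_H$ bounds the exponents involved by $1$; at most one index $s$ is of this kind. Otherwise $C_s$ is the irreducible rational curve $M_1=c\,M_2$ with $c\neq0$, and the step I would carry out in detail is the local computation, in the three standard affine charts, showing that $C_s$ is smooth away from the three coordinate points, that at a coordinate point on $C_s$ the local equation reduces (up to a unit) to one of binomial type $u^p-c\,v^q$ in local coordinates $u,v$ — so $C_s$ is unibranch there with a single tangent line, the coordinate line attached to the smaller exponent — and that whether this point is smooth, a flex, or a cusp (i.e.\ a unibranch singular point with one tangent, whose cuspidal tangent is that line) is completely determined by the exponents of $M_1$ and $M_2$. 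Reading this off the exponent patterns occurring in \Cref{table:cases} yields exactly four behaviours, according to the primitive triple $(\alpha,\beta,\gamma)$: if $\beta=1$ then $C_s$ is a line through one of the vertices $(1:0:0)$, $(0:0:1)$; if $\beta=2$ (so $\alpha=\gamma=1$) then $C_s$ is a smooth conic bitangent to the two fixed lines $x=0$ and $z=0$ at the two fixed points $(0:0:1)$ and $(1:0:0)$; if $\beta\ge3$ and $\min(\alpha,\gamma)=1$ then $C_s$ is unicuspidal, with the cusp and the cuspidal tangent located at the same vertex for all $s$ and all the $C_s$ sharing a common tangent at the other relevant vertex; and if $\min(\alpha,\gamma)\ge2$ then $C_s$ is bicuspidal, with the two cusps and the two cuspidal tangents common to all $s$. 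Moreover, for a fixed $H$ the curves $C_s$ all lie in the pencil spanned by $M_1$ and $M_2$, whence they share a common degree — this already gives the final assertion of the theorem.

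It then remains to glue back the monomial factor $m$ and to read off, row by row in \Cref{table:cases}, which of the four families the union of the $C_s$ and the coordinate lines dividing $m$ belongs to. Using that $x=0$, $y=0$, $z=0$ pass through the pairs of vertices $\{(0:1:0),(0:0:1)\}$, $\{(1:0:0),(0:0:1)\}$, $\{(1:0:0),(0:1:0)\}$ respectively, in each row one identifies which of the lines supplied by $m$ is a common tangent of the conics, a cuspidal tangent, the line joining two cusps, or a general line through a cusp, and checks that this matches the ``at least one'' and ``possibly'' clauses of the four families; concretely, the rows with $\beta=1$ give a line together with a set of concurrent lines, the rows with $\beta=2$ give bitangent conics with at least one common tangent, the rows with exactly one of $\alpha,\gamma$ at least $2$ give the unicuspidal family, and the rows with both at least $2$ give the bicuspidal family. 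Finally, the remaining endpoint configurations $(\Gamma_1,\Delta_1)$ and $(\Gamma_3,\Delta_3)$ are obtained from the case $\ell=2$ by permuting the variables, an operation under which the four families of the statement are invariant, so the classification holds in general. The hard part of this argument is the bookkeeping in the last step — systematically assigning every sub-case of \Cref{table:cases}, including the boundary values $i=1,n-2,n-1$ and the degenerate segments $H=L_\ell$, to the correct family — together with the elementary but somewhat tedious local computations that pin down the singularities and tangent lines of the binomial curves $C_s$.
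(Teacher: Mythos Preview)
Your approach is essentially the same as the paper's: the paper states that the theorem ``is an immediate consequence of the inspection of the zero loci of polynomials appearing in \Cref{table:cases}'', and you carry out exactly that inspection, supplying the details (the factorisation $g_H=m\cdot\prod_s(\lambda_sM_1+\mu_sM_2)$, the local analysis of the binomial components $C_s$, and the case split on $(\alpha,\beta,\gamma)$) that the paper leaves implicit. Your outline is correct; in particular the reduction to $\ell=2$ via a permutation of variables mirrors the remark preceding \Cref{table:cases}, and the segments $L_1,L_2,L_3$ that escape \Cref{lemma:extremes} are not truly exceptional --- they appear in the table as the rows (3) with $i=1$ and (1) with $i=n-1$ (and the symmetric row for $L_2$), so no separate argument is needed for them.
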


We conclude this Section by computing the cardinality of~$\mathcal{H}_{\mathrm{red}}$.

\begin{lemma}
\label{lemma:num_max_seg}
The number of maximal segments $H(P, Q)\subseteq \mathcal{H}_2$ with $P\in \Gamma_\ell$ and $Q\in \Delta_\ell$ is $6n-11$ (here $\ell=1, 2, 3$).
\end{lemma}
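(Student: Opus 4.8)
Since the permutations of $x,y,z$ act on $\calT'$ by permuting coordinates, preserve $\mathcal{H}_2$, and act transitively on the three pairs $(\Gamma_\ell,\Delta_\ell)$, it suffices to prove the formula for $\ell=2$: I will count the maximal segments $H\in\mathcal{H}_2$ with $H\cap\Gamma_2\neq\emptyset$ and $H\cap\Delta_2\neq\emptyset$. The plan is to first count the pairs $(P,Q)\in\Gamma_2\times\Delta_2$ with $H(P,Q)\in\mathcal{H}_2$, and then to factor out the pairs giving the same segment. Since $n\geq 4$, every point of $\Gamma_2$ has second coordinate $n-1$ or $n-2$, so $\Gamma_2\cap\Delta_2=\emptyset$ and each such pair has $P\neq Q$. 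Writing $\Delta_2$ as the disjoint union of its $n-1$ points with second coordinate $0$ and its $n$ points with second coordinate $1$ gives $|\Delta_2|=2n-1$, hence $|\Gamma_2\times\Delta_2|=3(2n-1)=6n-3$.

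Next I would discard the pairs with $H(P,Q)\notin\mathcal{H}_2$, i.e.\ those for which $H(P,Q)$ is $H_1$, $H_2$, $H_3$, or a segment aligned with $(n,0,0)$. No point of $\Gamma_2$ has second coordinate $0$, so $H_2$ never occurs. The edge $\{i=0\}$ contains only the point $(0,n-1,1)$ of $\Gamma_2$ and only the point $(0,1,n-1)$ of $\Delta_2$, so exactly one pair gives $H_1$; symmetrically $\{k=0\}$ contains only $(1,n-1,0)\in\Gamma_2$ and only $(n-1,1,0)\in\Delta_2$, so exactly one pair gives $H_3$. For the two remaining points $(0,n-1,1)$ and $(1,n-2,1)$ of $\Gamma_2$: the line from $(n,0,0)$ through such a point, say $(n,0,0)+t\bigl(-(v+w),v,w\bigr)$ with $(v,w)\in\{(n-1,1),(n-2,1)\}$, meets $\calT_n$ only at $t\in\{0,1\}$, because $0\leq t\leq n/(v+w)<2$ while $\gcd(v,w)=1$ forces $t\in\Z$; such a line therefore carries no maximal segment of $\calT'$, so no further pair is discarded. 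Hence exactly $6n-3-2=6n-5$ pairs satisfy $H(P,Q)\in\mathcal{H}_2$; these are precisely the $6n-5$ pairs $(P_2,Q_2)$ listed across the six cases of \Cref{table:cases}.

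Finally I would pass from pairs to segments using \Cref{lemma:extremes}: if $H(P,Q)\notin\{L_1,L_2,L_3\}$ then $P$ and $Q$ are the extremes of $H(P,Q)$, and since one extreme lies in $\Gamma_2$ and the other in the disjoint set $\Delta_2$, the pair is recovered from the segment; so distinct such pairs give distinct segments (and $|H\cap\Gamma_2|=|H\cap\Delta_2|=1$ for these $H$). The segment $L_2$ is never obtained, as $\Gamma_2$ has no point with second coordinate $1$. The fibre over $L_1$ consists of the pairs with $P\in\Gamma_2\cap L_1=\{(1,n-1,0),(1,n-2,1)\}$ and $Q\in\Delta_2\cap L_1=\{(1,1,n-2),(1,0,n-1)\}$, i.e.\ four pairs; the fibre over $L_3$ consists of the pairs with $P\in\Gamma_2\cap L_3=\{(0,n-1,1),(1,n-2,1)\}$ and $Q\in\Delta_2\cap L_3=\{(n-2,1,1),(n-1,0,1)\}$, again four pairs; and for $n\geq 4$ these two quadruples are disjoint and $L_1\neq L_3$. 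In the language of \Cref{table:cases}, each quadruple comprises one retained row and the three sub-cases discarded in its caption (the rows $i=1$ of (4), (5), (6) over $L_1$; the rows $i=n-1$ of (5) and $i=n-2$ of (2), (6) over $L_3$). Collapsing each quadruple to its single segment subtracts $3+3=6$, so the number of maximal segments is $6n-5-6=6n-11$.

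The only delicate part is the bookkeeping in the last two paragraphs; the step requiring care is the fibre analysis — verifying that exactly $L_1$ and $L_3$ receive more than one pair, that each of them receives exactly four, and that no further coincidences occur — but all of this reduces to reading off the coordinates of $\Gamma_2$, $\Delta_2$ and the $L_i$ once $n\geq 4$.
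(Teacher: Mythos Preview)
Your proof is correct, with one harmless slip: by definition $\mathcal{H}_2$ consists of all maximal segments of $\calT'$ except $H_1,H_2,H_3$, so segments aligned with $(n,0,0)$ are \emph{not} excluded from $\mathcal{H}_2$. Your check that no pair $(P,Q)\in\Gamma_2\times\Delta_2$ produces such a segment is therefore unnecessary for the count, but it does no harm since you conclude there are none anyway. Everything else --- the pair count $6n-3$, the two exclusions for $H_1$ and $H_3$, and the fibre analysis over $L_1$ and $L_3$ via \Cref{lemma:extremes} --- is accurate.

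Your route differs from the paper's in organization. The paper fixes each of the three points $P_1,P_2,P_3$ of $\Gamma_1$ and, for each, counts the segments $H(P_s,Q)$ as $Q$ runs over the two ``layers'' of $\Delta_1$, obtaining three sets $X_1,X_2,X_3$ of $2n-3$ segments each; it then performs inclusion--exclusion on the $X_s$'s, finding two single-element overlaps, to reach $3(2n-3)-2=6n-11$. You instead count ordered pairs $(P,Q)$ globally and pass to segments in one stroke using \Cref{lemma:extremes}, which makes that lemma do real work (the paper states it but does not invoke it in this proof) and gives a transparent link to the $6n-5$ rows of \Cref{table:cases}. The paper's organization avoids appealing to \Cref{lemma:extremes} but must argue, for each fixed $P_s$, that distinct $Q$'s give distinct segments, and then track which segments recur across the $X_s$. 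Both are straightforward; yours makes the double-counting more visibly concentrated on $L_1$ and $L_3$.
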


\begin{proof}
We can consider the case $\ell=1$. The other two cases are symmetric.
We have that $\Delta_1 = D_a \cup D_b$, where
$D_a = \{ (0, j, n-j) \mid \ j = 1, \dotsc, n-1\}$ and
$D_b = \{(1, j, n-j-1) \mid j = 0, \dotsc, n-1 \}$.
Let $P_1 = (n-1, 1, 0)$, then
\[
    U_1 = \{H(P_1, Q) \mid Q\in D_a\}
\]
has $n-1$ maximal segments all contained in $\mathcal{H}_2$.
The set
\[
    U_2 = \{H(P_1, Q) \mid Q\in D_b\}
\]
has $n$ maximal segments but contains $\{(n-1, 1, 0), \dotsc, (1, n-1, 0)\}$, which
is not in $\mathcal{H}_2$, so $U_2 \cap \mathcal{H}_2$ has $n-1$ elements.
The sets $U_1$ and $U_2 \cap \mathcal{H}_2$ have the maximal segment~$L_2$ in common.
Overall, starting from the point $P_1$, 
we have therefore a set $X_1= U_1 \cap U_2 \cap \mathcal{H}_2$ with $2n-3$ distinct maximal segments.
We can do the same construction starting from $P_2 = (n-1, 0, 1)$ and we get a similar result, 
i.e., a set $X_2$ with $2n-3$ maximal segments contained
in $\mathcal{H}_2$ (here, in place of~$L_2$, we find~$L_3$).
Finally, if we take the point
$P_3 = (n-2, 1, 1)$, we get
that $\{H(P_3, Q) \mid Q\in D_a\}$ and $\{H(P_3, Q) \mid Q\in D_b\}$
again have $n-1$ and $n$ elements, respectively, but now the two sets have the two maximal
segments $L_2$ and $L_3$ in common.
Hence, the corresponding set $X_3$ of
maximal segments has again $2n-3$ elements.
Finally, it holds
\[
    X_1 \cap X_2 = \emptyset, \quad
    X_1 \cap X_3 = \{L_1\}, \quad
    X_2 \cap X_3 = \{L_2\}.
\]
Therefore, the number of maximal segments contained in $\mathcal{H}_2$ with one point
in $\Gamma_1$ and another point in $\Delta_1$ is $3(2n-3)-2 = 6n-11$.
\end{proof}

\begin{prop}
The number of elements of $\mathcal{H}_{\mathrm{red}}$ is $6(3n-8)$.
\end{prop}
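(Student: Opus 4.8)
The plan is to compute $|\mathcal{H}_{\mathrm{red}}|$ by inclusion--exclusion over three natural pieces. For $\ell\in\{1,2,3\}$ put
\[
    A_\ell := \bigl\{ H\in\mathcal{H}_2 \mid H\cap\Gamma_\ell\neq\emptyset \text{ and } H\cap\Delta_\ell\neq\emptyset \bigr\},
\]
so that $\mathcal{H}_{\mathrm{red}}=A_1\cup A_2\cup A_3$. First I would note that, since $n\ge 4$, every point of $\Gamma_\ell$ has its distinguished coordinate $\ge n-2\ge 2$ while every point of $\Delta_\ell$ has it $\le 1$; hence $\Gamma_\ell\cap\Delta_\ell=\emptyset$, so any $H\in A_\ell$ is the maximal segment $H(P,Q)$ through a point $P\in H\cap\Gamma_\ell$ and a point $Q\in H\cap\Delta_\ell$, and conversely every such $H(P,Q)$ lying in $\mathcal{H}_2$ belongs to $A_\ell$. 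This identifies $A_\ell$ with the collection counted in \Cref{lemma:num_max_seg}, so $|A_\ell|=6n-11$.

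The core of the argument is the computation of the pairwise intersections. I would fix $\ell=1$, $m=2$ (the remaining cases follow by permuting $x,y,z$) and observe that any $H\in A_1\cap A_2$ must contain a point $P\in\Gamma_1$ and a point $Q\in\Gamma_2$; as $\Gamma_1$ and $\Gamma_2$ lie near the distinct vertices $(n,0,0)$ and $(0,n,0)$ they are disjoint, so $P\neq Q$ and $H=H(P,Q)$. It then suffices to run through the nine pairs in $\Gamma_1\times\Gamma_2$: the unique pair lying on the line $z=0$ spans the segment $H_3\notin\mathcal{H}_2$; the four pairs lying on the line $z=1$ all span the single maximal segment $L_3$; and each of the remaining four pairs has a difference vector with coprime entries, hence spans a two-point maximal segment (automatically in $\mathcal{H}_2$), and these four are pairwise distinct and distinct from $L_3$. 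A direct check confirms that each of these five surviving segments meets $\Delta_1$ (it contains a point of first coordinate $\le 1$) and $\Delta_2$ (a point of second coordinate $\le 1$), so $|A_1\cap A_2|=5$, and similarly $|A_1\cap A_3|=|A_2\cap A_3|=5$.

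Next I would show $A_1\cap A_2\cap A_3=\emptyset$, the point being that a line cannot pass near all three vertices of $\calT_n$. If $H$ lay in the triple intersection it would be contained in a line through points $P\in\Gamma_1$, $Q\in\Gamma_2$, $W\in\Gamma_3$; using $i_P\ge n-2$, $j_P,k_P\le 1$ and the cyclic analogues for $Q$ and $W$, and parametrising the line as $t\mapsto P+t(Q-P)$, the first coordinate is strictly decreasing in $t$ and the second strictly increasing. The bound $j_W\le 1$ then forces $t_W<1$ and the bound $i_W\le 1$ forces $t_W>0$, so $W$ lies strictly between $P$ and $Q$, whence $k_W\le\max(k_P,k_Q)\le 1<n-2$, contradicting $k_W\ge n-2$. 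Putting everything together,
\[
    |\mathcal{H}_{\mathrm{red}}| = \sum_{\ell=1}^3|A_\ell| - \sum_{1\le\ell<m\le3}|A_\ell\cap A_m| + |A_1\cap A_2\cap A_3| = 3(6n-11) - 3\cdot 5 + 0 = 6(3n-8).
\]

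The bulk of the work --- and the only place where care is genuinely needed --- is the second step: correctly recognising which of the nine $\Gamma_1\times\Gamma_2$ pairs collapse onto $L_3$, which pair produces the excluded segment $H_3$, and verifying that each of the five survivors truly meets both $\Delta_1$ and $\Delta_2$. The empty-triple-intersection step is elementary once the monotonicity of the two coordinates along the line is observed.
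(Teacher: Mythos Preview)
Your proof is correct and follows essentially the same inclusion--exclusion strategy as the paper: decompose $\mathcal{H}_{\mathrm{red}}$ into the three sets $A_\ell$, invoke \Cref{lemma:num_max_seg} for $|A_\ell|=6n-11$, and compute $|A_\ell\cap A_m|=5$ by running through the nine pairs in $\Gamma_\ell\times\Gamma_m$. You are in fact more careful than the paper in one respect: you explicitly verify $A_1\cap A_2\cap A_3=\emptyset$, which the paper's formula $3(6n-11)-3\cdot 5$ tacitly assumes but does not prove.
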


\begin{proof}
Let $\mathcal{M}_{\ell}$ be the set of maximal segments $H(P, Q)$
with $P\in \Gamma_{\ell}$ and $Q\in \Delta_{\ell}$, for $\ell=1, 2, 3$.
From \Cref{lemma:num_max_seg} we know that each $\mathcal{M}_l$ has
$6n-11$ elements.
To compute the cardinality of~$\mathcal{H}_{\mathrm{red}} = \mathcal{M}_1 \cup \mathcal{M}_2 \cup \mathcal{M}_3$, we determine the common elements between~$\mathcal{M}_\ell$ and~$\mathcal{M}_m$.
Take, for instance, $\ell=1$ and $m=2$.
Then
\[
    \mathcal{M}_1 \cap \mathcal{M}_2 =
    \bigl\{ 
        H(P, Q) \in \mathcal{H}_2 \mid
        P \in \Gamma_1 \cap \Delta_2 
        \mbox{ and } 
        Q \in \Delta_1 \cap \Gamma_2
    \bigr\}.
\]
Let $P_1 = (n-1, 1, 0)$, $P_2 = (n-1, 0, 1)$, and $P_3 = (n-2, 1, 1)$
be the three points of~$\Gamma_1$ and let $P_1', P_2', P_3'$ be the
corresponding points of~$\Gamma_2$.
Note that $P_1, P_3, P_1', P_3'$ are aligned, so
$H(P_1, P_1') = H(P_1, P_3') = \cdots = H(P_3, P_3')$.
Hence, from these points we get a single element of~$\mathcal{M}_1 \cap \mathcal{M}_2$.
The other elements of this intersection are four, and are given by:
\[
    H(P_1, P_2'), \quad
    H(P_2, P_1'), \quad
    H(P_2, P_3'), \quad
    H(P_3, P_2').
\]
Note that $H(P_2, P_2')$ is not considered, since it is not in
$\mathcal{H}_2$.
From this computation, we see that the number of elements of
$\mathcal{H}_{\mathrm{red}}$ is $3(6n-11)-3\cdot 5 = 6(3n-8)$.
\end{proof}

\section{The case \texorpdfstring{$A=y$, $B=z$, $C=0$}{A=y, B=z, C=0}}
\label{special_case}

In the case $A=y$, $B=z$, and $C=0$, the existence of a linear Jacobian syzygy is sufficient to obtain
a classification of the corresponding curves, and the only two possibilities for the zero set of the polynomial~$g$ are the so called \emph{P{\l}oski curves} (see \cite{Cheltsov2017}), namely:
\begin{itemize}
    \item a union of conics, belonging to a hyperosculating pencil, if $n$ is even;
    \item a union of conics, belonging to a hyperosculating pencil, together with their common tangent line, if $n$ is odd.
\end{itemize}
Such curves are all free by \cite[Example 3.2 (2)]{BeorchiaMiroRoig2024},
and they are precisely the curves attaining both the local and global bounds on the \emph{Milnor number},
see \cite{Ploski2014} and \cite{Shin2016}.

The discussion in this case can be carried in the same spirit of \Cref{general_case}.
Since the result is already proved via other techniques in \cite[Proposition~4.4]{PatelRiedlTseng2024},
we only give a sketch of the proof for readability reasons.

We focus on the case when $n$ is even; the proof when $n$ is odd goes similarly.
Let
\[
    g = \sum_{i+j+k=n} g_{ijk} x^i y^j z^k
\]
be a homogeneous, degree $n$ polynomial in $x, y, z$.

To simplify the notations,
a point of $\calT_n$ (i.e., a monomial of degree $n$ in $x, y, z$) will
be denoted by $(i, j)$ instead of $(i, j, n-i-j)$ and the coefficient
$g_{ij\, n-i-j}$ of $g$ will be denoted by $g_{ij}$.
Let
\[
    v := y \partial_x g + z\partial_y g.
\]
The polynomial $v$ is homogeneous of degree $n$.
Let $c(i, j)$ denote the coefficient in~$v$ of the monomial~$x^iy^jz^{n-i-j}$.
Then
\begin{align}
\label{eq:coeff_Cij}
    c(i, j) &=
    \left\{
    \begin{array}{ll}
        (j+1)g_{ij+1}+(i+1)g_{i+1\, j-1} & \mbox{if $j \geq 1$ and $k \geq 1$}, \\
        (i+1)g_{i+1\, j-1} & \mbox{if $j \geq 1$ and $k = 0$}, \\
        g_{ij+1} & \mbox{if $j = 0$ and $k \geq 1$}.
    \end{array}
    \right.
\end{align}
We determine the conditions on the coefficients of~$g$ that make $v$ vanish.
Therefore, we solve the linear system in $g_{ij}$:
\begin{equation}
\label{eq:tot}
    c(i, j) = 0  \quad \mbox{for} \ (i, j) \in \calT_n \,.
\end{equation}
The following maximal segments of $\calT_n$:
\begin{align*}
    \mathcal{A}_j &= 
    \bigl\{
        (0, j) + t(1, -2) \mid t\in \mathbb{N}, \ t \leq j/2
    \bigr\},
    && \mbox{for} \ j = 0, \dotsc, n-1 \\
    \mathcal{B}_j &= 
    \bigl\{
        (n-j, j) +t(1, -2)\mid t\in \mathbb{N}, \ t \leq j/2
    \bigr\},
    && \mbox{for} \ j = 0, \dotsc, n
\end{align*}
form a partition of $\calT_n$, so \Cref{eq:tot} is equivalent to
\begin{align}
\label{eq:Aj}
    c(i, j) &= 0 \quad \mbox{for} \ (i, j) \in \mathcal{A}_j \text{ for } j = 0, \dotsc, n-1 \\
\label{eq:Bj}
    c(i, j) &= 0 \quad \mbox{for} \ (i, j) \in \mathcal{B}_j \text{ for } j = 0, \dotsc, n
\end{align}
Moreover, from~\eqref{eq:coeff_Cij}, it is possible to see that for every
coefficient $g_{h\ell}$ of $g$ there exists one and only one index $j$
such that $g_{h\ell}$ appears in \Cref{eq:Aj,eq:Bj}, so they can be solved independently.

As an example, consider the linear system $c(i, j) = 0$ for $(i, j) \in A_6$
(assuming $n$ is big enough). Since $A_6$ is given by
\[
    (0, 6), (1, 4), (2, 2), (3, 0) \,,
\]
from~\eqref{eq:coeff_Cij} we get the system of equations:
\[
    \left\{
    \begin{aligned}
        7g_{07} + \phantom{1}g_{15} &= 0\\
        5g_{15} + 2g_{23} &= 0 \\
        3g_{23} + 3g_{31} &= 0 \\
        \phantom{1}g_{31} \phantom{\ \ + 3g_{31}} &= 0
    \end{aligned}
    \right.
\]
The only solution of this system is the zero solution.

With a computation very similar to the one of the example above,
we see that all the maximal segments~$\mathcal{B}_j$ and all
the maximal segments $\mathcal{A}_j$ when $j$ is even give that the involved
coefficients of $g$ are zero, therefore the polynomial $g$ becomes:
\[
    \bar{g} = \sum_{j=1, 3, 5, \dots} h_j
    \quad \text{where} \quad
    h_j = \sum_{(i, j) \in \mathcal{A}_j}g_{ij}x^iy^jz^{n-i-j}
\]
and $g_{ij}$ satisfies \Cref{eq:Aj} for $j$ odd.

\noindent
Here are the first cases of \Cref{eq:Aj}:

\begin{table}[H]
\begin{tabular}{cl}
    \toprule
    $j = 1$ & $2g_{02}+g_{10} = 0$ \\
    $j = 3$ & $4g_{04}+g_{12} = 0, \ 2g_{12}+2g_{20} = 0$\\
    $j = 5$ & $6g_{06}+g_{14} = 0, \ 4g_{14}+2g_{22} = 0, \
    2g_{22}+3g_{30} = 0$\\
    $j = 7$ & $8g_{08}+g_{16} = 0,\ 6g_{16}+2g_{24} = 0, \ 4g_{24}+ 3g_{32} = 0,\
    2g_{32}+4g_{40} = 0$ \\ 
    \bottomrule
\end{tabular}
\end{table}
The solutions of these equations are (up to a scalar factor):
\begin{table}[H]
\begin{tabular}{cl}
    \toprule
    $j = 1$ & $g_{02} = 1, \ g_{10} = -2 $\\
    $j = 3$ & $g_{04} = 1, \ g_{12} = -4, \ g_{20} = 4$ \\
    $j = 5$ & $g_{06} = 1, \ g_{14} = -6, \ g_{22} = 12, \ g_{30} = -8$ \\
    $j = 7$ & $g_{08} = 1, \ g_{16} = -8, \ g_{24} = 24, \ g_{32} = -32, g_{40} = 16$\\
    \bottomrule
\end{tabular}
\end{table}
Therefore
\begin{align*}
    h_1 &= g_{02}z^{n-2}\left(y^2-2xz\right) \\
    h_3 &= g_{04}z^{n-4}\left(y^4-4xy^2z +4x^2z^2\right)\\
    &= g_{04}z^{n-4}\left(y^2-2xz\right)^2\\
    h_5 &= g_{06} z^{n-6}\left(y^6-6xy^4z+12x^2y^2z^2-8x^3z^3 \right)
    \\
    & = g_{06}z^{n-6}\left(y^2-2xz \right)^3 \\
    h_7 & = g_{08}z^{n-8}\left(y^8-8xy^6z+24x^2y^4z^2-32x^3y^2z^3-16x^4z^4\right)\\
    & = g_{08}z^{n-8}\left(y^2-2xz \right)^4
\end{align*}
and these expressions suggest the general form for $h_j$.
In conclusion, since $n = 2m$ is even, if we set $q = y^2-2xz$, the polynomial~$g$ becomes:
\[
    \bar{g} = \sum_{\ell=1}^m\lambda_{\ell}q^{\ell}z^{m-{\ell}}
\]
where $\lambda_{\ell} = g_{0\, 2{\ell}}$ are free parameters. Hence, $\bar{g}$ is a
product of conics.

\section{Applications and final comments}
\label{applications}

\subsection{Comparison with related results}

We compare our results with \cite[Theorem 3.5]{BuchweitzConca2013}, where a characterization of the syzygy matrices of free curves
with a Jacobian syzygy of the type $(ax,by,cz)$ is given.
The authors prove that if $abc\neq 0$, then the existence of the syzygy $(ax,by,cz)$ guarantees that the curve is free, and a syzygy matrix is given by
\[
    \begin{pmatrix}
        ax & \qquad \left(\frac{1}{c} - \frac{1}{b}\right)\ (n+2)^{-1} \partial_{yz}g \\
        by & \qquad \left(\frac{1}{a} - \frac{1}{c}\right)\ (n+2)^{-1}\partial_{xz}g\\
        cz & \qquad \left(\frac{1}{b} - \frac{1}{a}\right)\ (n+2)^{-1}\partial_{xy}g\\
    \end{pmatrix}.
\]

If $c=0$ and $ab\neq 0$, the authors prove that the curve is free if and only if $\partial_zf \in (x,y)$.

Our results describe precisely the form of the second column of the syzygy matrix, in particular with respect to the support of the polynomials appearing there.

\subsection{Free curves with a linear Jacobian syzygy and quasi-homogeneous singularities}
As an application of our results, we can characterize
the free curves with a linear Jacobian syzygy and only quasi-homogeneous singularities.
We recall the following definition.

\begin{definition}
An isolated singularity of a hypersurface is \emph{quasi-homogeneous} if and only if
there exists a holomorphic change of variables so that a local defining equation is weighted homogeneous.
Recall that $f(y_1, \dotsc, y_n) = \sum c_{i_1\cdots i_n}y_1^{i_1}\cdots y_n^{i_n}$ is said to be {\em weighted homogeneous}
if there exist rational numbers $\alpha_1,\cdots ,\alpha_n$ such that $\sum c_{i_1\cdots i_n}y_1^{i_1\alpha_1}\cdots y_n^{i_n\alpha _n}$ is homogeneous.
\end{definition}

It turns out that the quasi-homogeneity can be detected from a first syzygy matrix $M$ by 
\cite[Theorem 3.3]{AndradeBeorchiaMiroRoig2025}, for the case of free and nearly free curves, and
\cite[Theorem 4.8]{AndradeBeorchiaEtAl2025} for hypersurfaces with isolated singularities:

\begin{theorem}\label{thm: qh}
Let $C = V(g)$ be a reduced plane curve, let $M$ be a first syzygy matrix in a resolution of the Jacobian singular scheme, and let $p\in Sing(C)$.
Then, it holds
\[
    p \text{ is a quasi-homogeneous singularity }
    \iff
    \text{rk} \ M(p) \ge 1.
\]
\end{theorem}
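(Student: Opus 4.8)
The plan is to derive \Cref{thm: qh} from K.~Saito's criterion for quasi-homogeneity of isolated hypersurface singularities, after translating the condition $\operatorname{rk} M(p)\ge 1$ into a statement about local Jacobian syzygies of $g$ at $p$.

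First I would pass to an affine chart: choose homogeneous coordinates so that $p\in\{z\neq 0\}$, put $u=x/z$, $v=y/z$ and $f=g(u,v,1)\in\mathcal{O}:=\mathcal{O}_{\p^2,p}$. Then $f_u=\partial_xg|_{z=1}$, $f_v=\partial_yg|_{z=1}$, and Euler's identity gives $\partial_zg|_{z=1}=nf-uf_u-vf_v$. Restricting a homogeneous syzygy $(a,b,c)$ of $J_g$ to $z=1$ therefore yields a relation $\alpha f_u+\beta f_v+\gamma f=0$ in $\mathcal{O}$, with $(\alpha,\beta,\gamma)=(\tilde a-u\tilde c,\ \tilde b-v\tilde c,\ n\tilde c)$, where $\tilde a,\tilde b,\tilde c$ are the restrictions of $a,b,c$; since $n\neq 0$ this assignment is invertible, so it identifies the localization $\mathrm{Syz}(J_g)_p$ with the module $\mathrm{AR}(f)_p$ of all relations among $(f_u,f_v,f)$, compatibly with evaluation at $p$. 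As the columns of $M$ generate $\mathrm{Syz}(J_g)$ over $R$, they generate $\mathrm{Syz}(J_g)_p$ over $\mathcal{O}$, so $\operatorname{rk} M(p)\ge 1$ holds exactly when some relation in $\mathrm{AR}(f)_p$ does not vanish at $p$ (this is well posed: it is insensitive to rescaling columns, and by Nakayama independent of the chosen resolution).

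Next I would invoke Saito's theorem: $p$ is a quasi-homogeneous singularity of $C$ if and only if $f\in(f_u,f_v)\mathcal{O}$. For $\Rightarrow$: writing $f=\alpha f_u+\beta f_v$ produces the relation $(\alpha,\beta,-1)\in\mathrm{AR}(f)_p$, whose last component is a unit; hence the corresponding syzygy has nonvanishing third restricted component $-1/n$ at $p$, and being an $\mathcal{O}$-linear combination of the columns of $M$ it forces $\operatorname{rk} M(p)\ge 1$. For $\Leftarrow$: pick a column $\sigma$ of $M$ with $\sigma(p)\neq 0$ and let $(\alpha,\beta,\gamma)\in\mathrm{AR}(f)_p$ be the associated relation. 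If $\gamma(p)\neq 0$, then $\gamma$ is a unit and $f=-\gamma^{-1}(\alpha f_u+\beta f_v)\in(f_u,f_v)\mathcal{O}$, so $p$ is quasi-homogeneous by Saito. It remains to exclude the case $\gamma(p)=0$, $(\alpha,\beta)(p)\neq 0$, and this is the crux. Here $V:=\alpha\,\partial_u+\beta\,\partial_v$ is an analytic vector field not vanishing at $p$ with $V(f)=-\gamma f$ and $\gamma(p)=0$; by the flow-box theorem there are analytic coordinates $(\xi,\eta)$ at $p$ with $V=\partial/\partial\xi$, and integrating $\partial_\xi f=-\gamma f$ in $\xi$ gives $f=w\cdot\psi(\eta)$ with $w\in\mathcal{O}^{*}$ (the exponential of $-\int\gamma\,d\xi$, a unit since it equals $1$ at $p$) and $\psi\in\C\{\eta\}$. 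Since $C$ is reduced, $f$ is squarefree in $\mathcal{O}$, hence $\psi$ is squarefree in $\C\{\eta\}$, so $\psi$ is a unit or $\psi=(\text{unit})\cdot\eta$; the first case gives $p\notin C$ and the second that $C$ is smooth at $p$, each contradicting $p\in\mathrm{Sing}(C)$. Thus this case cannot occur, and $\operatorname{rk} M(p)\ge 1$ implies quasi-homogeneity.

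The step I expect to be the main obstacle is precisely this last case distinction: showing that a Jacobian syzygy which does not vanish at a singular point of a reduced curve must have a unit in its ``Euler-normalized'' slot $\gamma$. The flow-box argument handles it, but one must make sure that reducedness of $C$ passes to the analytic local ring $\mathcal{O}_{\p^2,p}$ (so that $f$ is genuinely squarefree there), and that the identification $\mathrm{Syz}(J_g)_p\cong\mathrm{AR}(f)_p$ respects evaluation at $p$. The remaining ingredients---the chart reduction, the bookkeeping needed to make $\operatorname{rk} M(p)$ well defined for columns of varying degree, and the citation of Saito's criterion---are routine; since no freeness hypothesis enters, the argument applies uniformly to free, nearly free, and arbitrary reduced plane curves, and adapts with no change to isolated hypersurface singularities.
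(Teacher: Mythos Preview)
The paper does not prove \Cref{thm: qh}: it is quoted, with attribution to \cite[Theorem~3.3]{AndradeBeorchiaMiroRoig2025} and \cite[Theorem~4.8]{AndradeBeorchiaEtAl2025}, and then only \emph{applied} in the subsequent discussion. So there is no in-paper argument to compare your proposal against.

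That said, your outline is a sound self-contained proof. The chart reduction and the identification of $\mathrm{Syz}(J_g)_p$ with the module of local relations among $(f_u,f_v,f)$ via Euler's identity are correct and make the evaluation condition $\operatorname{rk} M(p)\ge 1$ intrinsic. Saito's criterion is the right tool for both directions. The step you correctly single out as the crux---excluding a syzygy with $(\alpha,\beta)(p)\neq 0$ but $\gamma(p)=0$---is handled by the flow-box argument: straightening $V=\alpha\,\partial_u+\beta\,\partial_v$ to $\partial_\xi$ and integrating $\partial_\xi f=-\gamma f$ gives $f=(\text{unit})\cdot\psi(\eta)$, and reducedness of $C$ then forces $\psi$ to have at most a simple zero, contradicting $p\in\mathrm{Sing}(C)$. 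In a full write-up you should make explicit (i) that syzygy formation commutes with passage to the analytic local ring (flatness of $\mathcal{O}_{\p^2,p}^{\mathrm{alg}}\hookrightarrow\mathcal{O}_{\p^2,p}^{\mathrm{an}}$ and exactness of localization), and (ii) that a squarefree $g$ remains squarefree in $\C\{u,v\}$ (finite-type $\C$-algebras are excellent, so reducedness survives completion). With those checks your argument is complete and, unlike the paper, does not defer to an external reference.
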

 \begin{rmk}
   Some related results have been given also in \cite[Theorem 2.2]{Tohaneanu2013}, where the author proves that a plane curve with only isolated quasi-homogeneous singularities is free if and only if it admits a Jacobian syzygy given by a regular sequence. Moreover, Hilbert-Burch matrices with columns given by non regular sequences have been investigated in \cite{GamaLiraEtAl2025}.

 \end{rmk}
Let us apply the criterion of \Cref{thm: qh} to the case of free curves with a linear Jacobian syzygy.

A necessary condition for non quasi-homogeneity is clearly given by the linear dependence of the three linear entries of~$M$.
By the assumption of working with a Jordan normal form and by the discussion in \Cref{lemma:second_and_third_case}, 
this may occur only in the general $(ax,by,cz)$ case with either $b=0$ or $c=0$,
since we always assume $a\neq 0$, or in the last case $(y,z,0)$. It is well known, see \cite{Ploski2014},
that in the latter case all the curves have only one singular point, which is indeed a non quasi-homogeneous singularity if $n\ge 4$.

In the general case, we analyze the cases listed in  \Cref{table:cases}, and we see that $bc=0$ if and only if
\begin{enumerate}
    \item \label{first}
    $n$ is even and $i=\frac{n}{2}$ in case (5), or
    \item \label{second}
    $n$ is odd and $i=\frac{n-1}{2}$ in case (6).
\end{enumerate}
In case \eqref{first}, we have
\[
    d = \gcd\left(\frac{n-2}{2}, -(n-2), \frac{n-2}{2}\right) = \frac{n-2}{2},
    \quad 
    (\alpha, \beta, \gamma)=(1,2,1),
\]
\[
    H = 
    \bigl\{
        (1,n-2,1), (2, n-4,2), (3,n-6,3), \dotsc, (n/2, 0, n/2)
    \bigr\},
\]
and a first syzygy matrix is given by
\[
    M =
    \begin{pmatrix}
        \frac{n}{2}(n-2)x & 0\\
        0 & E\\
        -\frac{n}{2}(n-2)z & F
    \end{pmatrix} \,,
\]
where
\[
    E = 
    e_{0\,n-2\,0}y ^{n-2} + e_{1\, n-4\, 1}x y^{n-4}z+\cdots 
    + e_{\frac{n-2}{2}\, 0 \, \frac{n-2}{2}}x^{\frac{n-2}{2}}z^{\frac{n-2}{2}},
\]
and
\[
    F = 
    -(n-2)e_{0\, n-2\, 0}\ y^{n-3} z - \frac{n-4}{2}e_{1\, n-4\, 1} xy^{n-5} z^2 
    - \cdots -(n-2)e_{\frac{n-4}{2}\, 2 \, \frac{n-4}{2}}x^{\frac{n-4}{2}}y z^{\frac{n-2}{2}}.
\]
We see that $M(0,1,0)=0$ if and only if $e_{0\, n-2\, 0}=0$; in this case
we have $E=xz \widetilde{E}$ and $F=xz^2 \widetilde{F}$ for suitable polynomials $\widetilde{E}$ and $\widetilde{F}$.
The resulting polynomial $g$ would be of the type
\[
    n\, g = \det 
    \begin{pmatrix}
        x& \frac{n}{2}(n-2)x & 0\\
        y& 0                  & xz \widetilde{E}\\
        z& -\frac{n}{2}(n-2)z & xz^2 \widetilde{F}
    \end{pmatrix} 
    = n(n-2)x^2 z^2 \widetilde{E} - \frac{n}{2}(n-2)x^2yz^2 \widetilde{F},
\]
which gives a non reduced curve.

Similarly, in case \eqref{second},
a first syzygy matrix is given by
\[
    M =
    \begin{pmatrix}
        (\frac{n-1}{2}(n-2)-1)x & 0\\
        0 & E\\
        1-\frac{n-1}{2}(n-2)z & F\\
    \end{pmatrix}.
\]
As before, if $M(0,1,0)=0$, then the resulting polynomial $g$ is not square-free.

Thus, we conclude that all the singularities of the reduced free curves with a syzygy $(ax,by,cz)$ are always quasi-homogeneous.
We summarize our conclusions as follows.

\begin{theorem}
\label{theorem:ploski}
If a reduced free curve with a linear Jacobian syzygy has some non quasi-homogeneous singularity,
then it is either an even or an odd P{\l}oski curve.
\end{theorem}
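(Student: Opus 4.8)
The plan is to combine the quasi-homogeneity criterion of \Cref{thm: qh} with the classification of linear Jacobian syzygies obtained in the previous sections. The starting observation is that, by \Cref{thm: qh}, a singular point $p$ of a reduced free curve $C=V(g)$ is \emph{not} quasi-homogeneous exactly when $\mathrm{rk}\,M(p)=0$, i.e.\ $M(p)=0$ for a first syzygy matrix $M$. Writing $M$ as in \eqref{eq:hilbert_burch_matrix} with linear first column $(A,B,C)^t$, the vanishing $M(p)=0$ forces in particular $A(p)=B(p)=C(p)=0$, so that $A,B,C$ are linearly dependent. Hence a necessary condition for $C$ to have a non quasi-homogeneous singularity is that the three linear entries of $M$ be linearly dependent, and such a $p$, if it exists, must be the (unique) common zero of $A$, $B$, $C$ in $\p^2$.

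Next I would invoke the proposition following \Cref{lemma:second_and_third_case}: up to a linear change of variables, the linear syzygy is either $(ax,by,cz)$ with $a\neq 0$ and $a,b,c$ not all equal, or $(y,z,0)$. In the second case the analysis carried out in \Cref{special_case} shows directly that $V(g)$ is an even or an odd P{\l}oski curve, which is the desired conclusion. So the whole content of the theorem reduces to proving that a reduced free curve with syzygy of type \eqref{eq:3_blocks} has only quasi-homogeneous singularities.

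For the case $(ax,by,cz)$ I would argue as follows. Since $a\neq 0$ and $C$ is not a union of concurrent lines, at most one of $b,c$ vanishes, and the linear forms $ax,by,cz$ are linearly dependent if and only if $bc=0$; up to swapping the roles of $y$ and $z$ we may assume $b=0$, so the only candidate bad point is $(0:1:0)$. Going through \Cref{table:cases} (and using the permutation symmetry that reduces us to segments with one extreme in $\Gamma_2$ and one in $\Delta_2$), the equality $bc=0$ holds precisely for case~(5) with $n$ even and $i=n/2$, and for case~(6) with $n$ odd and $i=(n-1)/2$. In each of these two families one writes down the explicit matrix $M$ — as in the computation preceding the statement — and checks that $D_H=0$, that $F(0:1:0)=0$ automatically, and that $E(0:1:0)=e_{0\,n-2\,0}$; thus $M(0:1:0)=0$ is equivalent to $e_{0\,n-2\,0}=0$. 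Imposing this makes $E$ divisible by $xz$ and $F$ divisible by $xz^2$, so by \eqref{eq:def_g} the polynomial $g$ becomes divisible by $x^2z^2$, contradicting the reducedness of $C$. Therefore no non quasi-homogeneous singularity can occur in this case, and combining the two cases proves the theorem.

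The routine part is the last non-reducedness computation; the step that requires care is making sure that \Cref{table:cases}, together with the permutations of the variables, genuinely exhausts all reduced free curves with a linear Jacobian syzygy of type \eqref{eq:3_blocks}, and that within this list the locus $bc=0$ is exactly the two sub-families identified above — this is the only place where one could overlook a case.
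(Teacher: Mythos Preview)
Your proposal is correct and follows essentially the same approach as the paper's proof: both reduce via \Cref{thm: qh} to the linear-dependence condition on $A,B,C$, dispose of the $(y,z,0)$ case by invoking the P{\l}oski classification from \Cref{special_case}, and in the $(ax,by,cz)$ case locate the sub-families with $bc=0$ in \Cref{table:cases} (namely case~(5) with $n$ even, $i=n/2$ and case~(6) with $n$ odd, $i=(n-1)/2$), then verify that $M(0:1:0)=0$ forces $e_{0\,n-2\,0}=0$ and hence $x^2z^2\mid g$, contradicting reducedness. Your caution about exhausting all cases via the permutation symmetry is well placed and matches the paper's implicit reliance on the remark preceding \Cref{table:cases}.
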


We point out that the knowledge of the precise form of the Hilbert-Burch matrix is crucial for the previous result.

\subsection{Classification of nearly free curves with a linear Jacobian syzygy}
\label{section:nearly_free}

By a result of Du Plessis and Wall, see \cite[Proposition 1.1]{DuPlessisWall1999},
the reduced plane curves admitting a linear Jacobian syzygies are exactly those having a positive dimensional automorphism group.
The latter have been classified in \cite[Section 1]{AluffiFaber2000}.
Our results allows one to detect precisely the free curves.

Indeed, the minimal degree of a Jacobian syzygy is related to the \emph{total Tjurina number}~$\tau(g)$,
that is the degree of the zero-dimensional singular Jacobian scheme,
and we have the following result (see \cite[Theorem 1.2]{Dimca2017}):

\begin{lemma}
Let $C=V(g)$ be a reduced singular curve in $\p^2$ of degree $n \ge 3$ that is not a union of concurrent lines.
If $g$ admits a linear Jacobian syzygy, then
$n^2-3n+2 \le \tau(C)\le n^2-3n+3$.

Moreover, we have $\tau(C)= n^2-3n+3$, respectively $\tau(C)= n^2-3n+2$, if and only if $g$ is free, respectively is nearly free
--- that is, the module $\mathrm{Syz}(J_g)$ is generated by three syzygies of degrees $(1,n-1,n-1)$.
\end{lemma}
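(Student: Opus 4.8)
The plan is to extract the bounds and the two characterizations from the du Plessis--Wall estimates, refined through the rank-two bundle attached to the syzygy module. First I would note that, since $C$ is not a union of concurrent lines, \Cref{assumption:no_concurrent_and_squarefree} forces $\partial_x g,\partial_y g,\partial_z g$ to be linearly independent, so $\mathrm{Syz}(J_g)_0=0$ and the hypothesis says precisely that $r:=\min\{k:\mathrm{Syz}(J_g)_k\neq 0\}=1$. Let $E_C$ on $\p^2$ be the sheafification of $\mathrm{Syz}(J_g)$; it is locally free of rank~$2$, since $\mathrm{Syz}(J_g)$ is reflexive. From the presentation $0\to E_C\to\mathcal{O}_{\p^2}^{3}\xrightarrow{(\partial_x g,\,\partial_y g,\,\partial_z g)}\mathcal{I}_\Sigma(n-1)\to 0$, with $\Sigma$ the Jacobian scheme of $C$ (of length $\tau(C)$), one gets $c_1(E_C)=1-n$, $c_2(E_C)=(n-1)^2-\tau(C)$, and $H^0\bigl(E_C(k)\bigr)=\mathrm{Syz}(J_g)_k$. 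A nonzero linear syzygy $\rho_1=(\ell_1,\ell_2,\ell_3)$ is then a global section of $E_C(1)$, and its zero scheme $Z$ is $0$-dimensional of length $c_2\bigl(E_C(1)\bigr)=n^2-3n+3-\tau(C)$.

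The upper bound and the ``free'' case are now immediate. Non-negativity of the length of $Z$ gives $\tau(C)\le n^2-3n+3$, with equality exactly when $Z=\emptyset$, i.e.\ the section is nowhere vanishing; in that case $E_C$ is an extension of $\mathcal{O}_{\p^2}(2-n)$ by $\mathcal{O}_{\p^2}(-1)$, which splits because $\mathrm{Ext}^1\bigl(\mathcal{O}(2-n),\mathcal{O}(-1)\bigr)=H^1\bigl(\mathcal{O}(n-3)\bigr)=0$, so $\mathrm{Syz}(J_g)\cong R(-1)\oplus R\bigl(-(n-2)\bigr)$ and $C$ is free with exponents $(1,n-2)$. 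Conversely, a free curve with these exponents has the Hilbert--Burch resolution $0\to R(-n)\oplus R(-(2n-3))\to R\bigl(-(n-1)\bigr)^{3}\to R\to R/J_g\to 0$, whose (constant) Hilbert polynomial is $n^2-3n+3$.

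For the other end I would invoke the du Plessis--Wall lower bound: as $r=1<n/2$ for $n\ge 3$, one has $\tau(C)\ge (n-1)(n-r-1)=n^2-3n+2$, i.e.\ $\deg Z\le 1$. Since $n^2-3n+2$ and $n^2-3n+3$ are the only integers in the allowed range, the remaining case is $\tau(C)=n^2-3n+2$, that is $\deg Z=1$, so $Z=\{p\}$ is a reduced point. Then the Serre construction gives $0\to\mathcal{O}_{\p^2}(-1)\xrightarrow{\,\rho_1\,}E_C\to\mathcal{I}_p(2-n)\to 0$, and taking global sections of all twists --- the groups $H^1\bigl(\mathcal{O}_{\p^2}(k-1)\bigr)$ being zero --- lifts this to an exact sequence of graded $R$-modules
\[
    0\;\longrightarrow\;R(-1)\;\xrightarrow{\ \rho_1\ }\;\mathrm{Syz}(J_g)\;\longrightarrow\;I_p(2-n)\;\longrightarrow\;0,
\]
with $I_p\subset R$ the saturated ideal of $p$, a complete intersection of two linear forms whose Koszul relation sits in degree~$2$. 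Hence $I_p(2-n)$ is generated by two forms of degree $n-1$ with a single relation in degree $n$, so $\mathrm{Syz}(J_g)$ is minimally generated by $\rho_1$ together with two syzygies of degree $n-1$ (it cannot be generated by fewer, else it would be free), and it has one relation in degree $n$: exactly the assertion that $C$ is nearly free with syzygy degrees $(1,n-1,n-1)$. Conversely, if $\mathrm{Syz}(J_g)$ is so generated, a comparison of determinants forces the resolution $0\to R(-n)\to R(-1)\oplus R\bigl(-(n-1)\bigr)^{2}\to\mathrm{Syz}(J_g)\to 0$, hence the resolution $0\to R(-(2n-1))\to R(-n)\oplus R(-(2n-2))^{2}\to R\bigl(-(n-1)\bigr)^{3}\to R\to R/J_g\to 0$, whose (constant) Hilbert polynomial is $n^2-3n+2$. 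This closes both equivalences.

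The main obstacle is the lower bound $\tau(C)\ge n^2-3n+2$, equivalently $\deg Z\le 1$: everything on the ``free'' side costs nothing beyond $\deg Z\ge 0$ and the vanishing of one $\mathrm{Ext}^1$, but excluding $\deg Z\ge 2$ --- which would produce a merely ``plus-one generated'' curve, with $\mathrm{Syz}(J_g)$ of generator degrees $(1,n-1,n)$ and $\tau(C)\le n^2-3n+1$ --- is the deep input and relies on the du Plessis--Wall discriminant analysis. A more self-contained argument, closer to the methods of this paper, is to use that $\sum\ell_i\partial_i g=0$ forces $C$ to be invariant under the one-parameter subgroup of $PGL_3(\C)$ generated by the linear vector field $\sum\ell_i\partial_i$ (see \cite{DuPlessisWall1999}): after reducing the syzygy to a Jordan form as in \eqref{eq:3_blocks}--\eqref{eq:1_block}, the singularities of $C$ are confined to the finite fixed locus of that subgroup and are weighted homogeneous, and a case analysis of the admissible monomial supports --- the same combinatorics used in \Cref{general_case} and \Cref{special_case} --- bounds the length of the Jacobian scheme, yielding $\deg Z\le 1$.
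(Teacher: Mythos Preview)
The paper does not prove this lemma: it is stated with an explicit citation ``(see \cite[Theorem 1.2]{Dimca2017})'' and no argument is given. So there is no ``paper's own proof'' to compare against; you have supplied a proof where the authors chose to quote one.

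Your argument is correct and is essentially the standard one behind the cited result. The vector-bundle reformulation via $E_C$, the Chern-class computation $c_2\bigl(E_C(1)\bigr)=n^2-3n+3-\tau(C)$, the splitting of $E_C$ when the section is nowhere vanishing (using $H^1\bigl(\mathcal{O}(n-3)\bigr)=0$), and the Serre-construction analysis when $\deg Z=1$ all go through. Two small points you leave implicit but which hold: the zero scheme $Z$ of the section is indeed $0$-dimensional because a common divisorial factor of $\rho_1$ would produce a syzygy of degree $<1$, contradicting $r=1$; and the statement ``it cannot be generated by fewer, else it would be free'' is justified since a rank-$2$ reflexive module over $R$ minimally generated by two elements has no relations, hence is free. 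Your final paragraph, sketching a route through the Jordan-form reduction of the linear syzygy, is a nice gesture toward the paper's own methods, but as you acknowledge it ultimately still leans on the du~Plessis--Wall input, so it does not yield a genuinely independent proof of the lower bound.
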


As a consequence, our results can be used to give a classification of nearly free curves with a linear Jacobian syzygy, which we now list.

\begin{corollary}
If a polynomial $g$ of degree $n\ge 4$ corresponds to a reduced nearly free curve with
a linear Jacobian syzygy, then up to a projectivity, such a syzygy is of the type $(ax,by,cz)$, and
the zero set of $g$ falls in one of the following families:
\begin{itemize}
    \item a union of bitangent conics belonging to a bitangent pencil, and possibly the secant line through the two bitangency points;
    \item a union of unicuspidal curves belonging to a pencil, all with the same cusp and the same cuspidal tangent, and possibly a general line through the cusp;
    \item a union of bicuspidal curves belonging to a pencil, all with the same two cusps, and possibly the line passing through the two cusps.
\end{itemize}
\end{corollary}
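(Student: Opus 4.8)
The plan is to exploit the Tjurina-number dichotomy of the Lemma above: a reduced, non-concurrent curve with a linear Jacobian syzygy is free when $\tau=n^2-3n+3$ and nearly free when $\tau=n^2-3n+2$, with no other possibility. Since the free curves with a linear syzygy are completely classified --- \Cref{theorem:characterization_free_curves} in the case $(ax,by,cz)$, and the P{\l}oski curves of \Cref{special_case} in the case $(y,z,0)$ --- the nearly free ones will be obtained by deleting the free list from the list of \emph{all} reduced, non-concurrent curves with a linear syzygy. First I would reduce to syzygies of type $(ax,by,cz)$: by the Proposition closing \Cref{linear_syzygy} the only alternative is $(y,z,0)$, but those curves are P{\l}oski curves, hence free by \cite[Example~3.2~(2)]{BeorchiaMiroRoig2024}; since a curve cannot be simultaneously free and nearly free, a nearly free curve with a linear syzygy is of type $(ax,by,cz)$, which is the first assertion.

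Assume then that $g$ has a syzygy $(ax,by,cz)$. The relation $ax\,\partial_x g+by\,\partial_y g+cz\,\partial_z g=0$ forces every monomial of $g$ to have vanishing $(a,b,c)$-weight, and since $(a,b,c)$ is not proportional to $(1,1,1)$ this means that $\mathrm{supp}(g)$ is collinear in the grid $\calT_n$, i.e.\ contained in a segment. If $\mathrm{supp}(g)$ meets two vertices of $\calT_n$ it lies on an edge, so $C$ is a union of concurrent lines and is excluded; hence $\mathrm{supp}(g)$ meets at most one vertex. If it meets none, then reducedness puts the maximal segment through $\mathrm{supp}(g)$ in $\mathcal{H}_{\mathrm{red}}$ (cf.\ the proof of \Cref{lemma:reduced_segments}) and makes $C$ one of the curves of \Cref{table:cases}; those curves are free --- they solve the curl equations, which are the freeness condition --- and the coefficient freedom in those solutions fills out every reduced curve supported on a segment of $\mathcal{H}_{\mathrm{red}}$, so in this case $C$ is free, not nearly free.

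The nearly free curves therefore come from supports containing a vertex. After permuting the variables that vertex may be taken to be $(n,0,0)$, in which case the weight vector has the form $(0,b,c)$ with $b$ and $c$ nonzero of opposite signs; writing $-c/b=p/q$ in lowest terms, $\mathrm{supp}(g)$ lies among the points $(n-s(p+q),sp,sq)$, so that
\[
    g=x^{n-S(p+q)}\prod_{s=1}^{S}\bigl(\lambda_s x^{p+q}+\mu_s y^{p}z^{q}\bigr),
    \qquad S=\left\lfloor\frac{n}{p+q}\right\rfloor,
\]
the $\lambda_s,\mu_s$ being free and, by reducedness, $n-S(p+q)\in\{0,1\}$ with the factors distinct. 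An irreducible factor $\lambda x^{p+q}+\mu y^{p}z^{q}$ is a smooth conic when $p=q=1$, a unicuspidal curve (unique cusp at $(0:0:1)$, cuspidal tangent $y=0$, up to swapping $y,z$) when $\{p,q\}=\{1,m\}$ with $m\ge2$, and a bicuspidal curve (cusps at $(0:1:0)$ and $(0:0:1)$, cuspidal tangents $z=0$ and $y=0$) when $p,q\ge2$; the members of the pencil $\{\lambda x^{p+q}+\mu y^{p}z^{q}\}$ all share these cusp(s) and cuspidal tangent(s) --- respectively, are mutually bitangent along $x=0$ --- and the optional line $x=0$ is exactly the secant through the two bitangency points, a line through the cusp, or the line through the two cusps. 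This reproduces the three families of the statement. Conversely each such configuration is weighted homogeneous, hence carries a syzygy of type $(ax,by,cz)$, and it is not free: a free curve of type $(ax,by,cz)$ must contain a distinguished tangent of its pencil by \Cref{theorem:characterization_free_curves}, whereas here the only possible coordinate component is the inessential line $x=0$. By the Lemma it is therefore nearly free.

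The step I expect to demand the most care is this vertex case: one must check, sub-case by sub-case in $(p,q)$ and $n$, that $n-S(p+q)$ is $0$ or $1$ as the geometry dictates and that the resulting union of pencil members with the possible line $x=0$ is genuinely reduced; and, if one wants to prove near-freeness directly rather than through \Cref{theorem:characterization_free_curves}, one must compute $\tau(C)=n^2-3n+2$ by a local analysis at the (at most two) base points of the pencil, where $C$ presents a superposition of $S$ mutually tangent branches --- plus a transverse smooth branch when $x=0$ occurs --- whose total Tjurina number must sum to the stated value. A minor additional point is to confirm, in the case where $\mathrm{supp}(g)$ avoids the vertices, that the coefficient freedom in the curl solutions really exhausts all reduced curves supported on a given segment of $\mathcal{H}_{\mathrm{red}}$.
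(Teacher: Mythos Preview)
Your proposal is correct and follows the approach the paper itself leaves implicit: the Corollary is stated without proof, as an immediate consequence of Dimca's free/nearly-free dichotomy (the Lemma) together with the paper's classification of the free curves (\Cref{theorem:characterization_free_curves} and \Cref{special_case}). You carry out exactly this complement argument, and your reduction to the $(ax,by,cz)$ case via the freeness of P{\l}oski curves is the right first step.

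Where you go a bit further than the paper is in re-deriving, from the weight condition $ai+bj+ck=0$, the full list of reduced non-concurrent curves admitting a diagonal linear syzygy, rather than invoking the Aluffi--Faber or Patel--Riedl--Tseng classifications that the paper cites. Your support-on-a-segment argument is clean and makes the dichotomy transparent: support avoiding all vertices of $\calT_n$ lands you in $\mathcal{H}_{\mathrm{red}}$ and hence in \Cref{table:cases} (free), while support through a vertex forces one of $a,b,c$ to vanish and yields your product form (nearly free). This recovers, as a byproduct, the remark following the Corollary that $abc=0$ in the nearly-free case.

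Two small points worth tightening. First, in your product $x^{n-S(p+q)}\prod_s(\lambda_s x^{p+q}+\mu_s y^p z^q)$ you should make explicit that all $\lambda_s$ and $\mu_s$ are nonzero: $\mu_s=0$ gives a non-reduced factor $x^{p+q}$, while $\lambda_s=0$ kills the coefficient of $x^n$ and removes the vertex from the support, pushing you back into the free case (indeed, a factor $y^pz^q$ introduces a cuspidal tangent and the curve then appears in \Cref{theorem:characterization_free_curves}). Second, your verification that the vertex-supported curves are not free by comparison with \Cref{theorem:characterization_free_curves} is correct, but the phrase ``distinguished tangent'' should be read carefully against each of the four families there: in every non-line case the free configuration contains at least one of the tangents $y=0$ or $z=0$, whereas your optional line $x=0$ is the secant (conic case), a general line through the cusp (unicuspidal case), or the line through the two cusps (bicuspidal case), none of which suffices for freeness. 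With these clarifications the argument is complete, and the direct Tjurina computation you flag as an alternative is unnecessary.
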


\begin{rmk}
By \cite[Theorem 3.5]{BuchweitzConca2013}, in the above cases we have $abc=0$;
this is indeed confirmed by the classification of nearly free conic-line arrangements with a linear Jacobian syzygy, given in \cite[Example 3.3]{BeorchiaMiroRoig2024}.
\end{rmk}

\subsection{Further investigation}
While our approach allows a complete description in the case of a linear Jacobian syzygy,
it seems hardly extendable to free curves with a higher degree first Jacobian syzygy, due to the lack of a simultaneous normal form.
However, our technique could be extended to higher dimensions and it deserves further investigation.

\bibliographystyle{alphaurl}
\bibliography{biblio}

\end{document}